% This is a common head file for most of my other tex files

\documentclass[10pt]{amsart}
\pagestyle{plain}
\usepackage[T2A]{fontenc}
\usepackage[utf8]{inputenc}
\usepackage[unicode]{hyperref}
\usepackage{color}
\usepackage{amsfonts,amssymb,amsmath,amscd,amsthm}
\usepackage[dvips]{graphicx}
\usepackage[matrix,arrow,curve]{xy}

\hypersetup{
  colorlinks   = true,
  urlcolor     = red,
  linkcolor    = black,
  citecolor   = red
}

\makeatletter
\def\@settitle{\begin{center}%
    \baselineskip14\p@\relax
    \bfseries
    \@title
  \end{center}%
} \makeatother

\usepackage{longtable}

\textwidth=500pt \textheight=650pt \addtolength{\topmargin}{-30pt}
\addtolength{\oddsidemargin}{-2.5cm}
\addtolength{\evensidemargin}{-2.5cm} \sloppy

%\topmargin=-2cm
%.\textwidth=16.4cm
%\textheight=23.7cm
%\oddsidemargin=-0.1cm
%\evensidemargin=-0.1cm

%\emergencystretch=7pt
%\tolerance=400

%%%%%%%%%%%%%%%%%%%%%%%%%%%%%%%%%%%%%%%%%%%%%%%%%%%%%%%%%%%%

%\numberwithin{equation}{section}
%\makeatletter\@addtoreset{equation}{section} \makeatother
%\renewcommand{\theequation}{\thesection.\arabic{equation}}

\makeatletter\@addtoreset{equation}{section}\makeatother
\renewcommand{\theequation}{\thesection.\arabic{equation}}

\renewcommand{\thesubsection}{\bf\thesection.\arabic{equation}}
\makeatletter\@addtoreset{subsection}{equation}\makeatother

\newtheorem{theorem}[equation]{Theorem}
\newtheorem*{theorem*}{Theorem}
\newtheorem{proposition}[equation]{Proposition}
\newtheorem*{proposition*}{Proposition}

\newtheorem{lemma}[equation]{Lemma}
\newtheorem*{lemma*}{Lemma}
\newtheorem{corollary}[equation]{Corollary}
\newtheorem*{corollary*}{Corollary}
\newtheorem{conj}[equation]{Conjecture}
\newtheorem{prop}[equation]{Proposition\,-\,definition}

\theoremstyle{definition}
\newtheorem{example}[equation]{Example}

\theoremstyle{remark}
\newtheorem{remark}[equation]{Remark}
%%%%%%%%%%%%%%%%%%%%%%%%%%%%%%%%%%%%%%%%%%%%%%%%%%%%%%%%%%%%%%
\makeatletter
\newcommand{\symbitem}[1]{\item[#1]%
\renewcommand{\@currentlabel}{#1}\ignorespaces}
\makeatother
%%%%%%%%%%%%%%%%%%%%%%%%%%%%%%%%%%%%%%%%%%%%%%%%%%%
\newcommand{\beq}{\begin{equation}}
\newcommand{\eeq}{\end{equation}}
\newcommand{\beqa}{\begin{eqnarray}}
\newcommand{\eeqa}{\end{eqnarray}}
\newcommand{\beaa}{\begin{eqnarray*}}
\newcommand{\ben}{\begin{eqnarray*}}
\newcommand{\eaa}{\end{eqnarray*}}
\newcommand{\een}{\end{eqnarray*}}
%%%%%%%%%%%%%%%%%%%%%%%%%%%%%%%%%%%%%%%%%%%%%%%%%%%%%%%%%%%%%%%%%%%%%%%%%%%

% mathcal letters

\def \E {\mathcal{E}}

\def \L {\mathcal{L}}

\def\OO {\mathcal{O}}

\def \V {\mathcal{V}}

%mathbb letters

\def \C {\mathbb{C}}
\def \F {\mathbb{F}}

\def \P {\mathbb{P}}
\def \Q {\mathbb{Q}}
\def \R {\mathbb{R}}

\def \Z {\mathbb{Z}}

\def \ge {\geqslant}

\def \le {\leqslant}

\def \kappa {\varkappa}

% Num_{\Q}

\def\={\;=\;}
\def\bal{\begin{aligned}}
\def\eal{\end{aligned}}

\newcommand{\lra}{\longrightarrow}

\newcommand{\Spec}{{\text{Spec }}}

\newcommand{\udot}{{\:\raisebox{3pt}{\text{\circle*{1.5}}}}}
\def \bullet {\udot}

 \DeclareMathOperator{\Hom}{Hom}
 \DeclareMathOperator{\Aut}{Aut}
 
 \DeclareMathOperator{\Ext}{Ext}
 
 \DeclareMathOperator{\mult}{mult}

\DeclareMathOperator{\Sing}{Sing} 
\DeclareMathOperator{\Pic}{Pic}
\DeclareMathOperator{\jac}{Jac} \DeclareMathOperator{\slg}{SL}

% Bibliography macros
\providecommand{\arxiv}[1]{\href{http://arxiv.org/abs/#1}{arXiv:#1}}

%\date{\today}

%\input head

\newtheorem*{notation}{Notation}

\newtheorem*{assumption}{Assumption}

\thanks{{\it MS 2010 classification}: 14J27, 18E30, 14J81}

\thanks{{\it Key words}: elliptic surface, derived category, phantom}

\title{\Large Exceptional collections and phantoms of special Dolgachev surfaces}

\author{Ilya Karzhemanov}
\address{\newline{\normalsize Laboratory of AGHA, Moscow Institute of Physics and Technology, 9 Institutskiy per., Dolgoprudny,
Moscow Region, 141701, Russia}
\smallskip
\newline{\it E-mail address}: karzhemanov.iv@mipt.ru}

\author{Ludmil Katzarkov}
\address{\newline{\normalsize University of Miami, Coral Gables, FL}
\medskip
\newline{\normalsize
Institute of Mathematics and Informatics, Bulgarian Academy of
Sciences, Acad. Georgi Bonchev Str., Block 8, Sofia, 1113,
Bulgaria}
\medskip
\newline{\normalsize
National Research University Higher School of Economics,
Laboratory of Mirror Symmetry, NRU HSE, 6 Usacheva str., Moscow,
119048, Russia}
\smallskip
\newline{\it E-mail address}: lkatzarkov@gmail.com}

\begin{document}

\begin{abstract}
We provide an explicit description of exceptional collection of
maximal length in the derived category $D^b(Y)$ for a particular
class of elliptic surfaces $Y$. The existence of non\,-\,trivial
semiorthogonal complement (a ``\,phantom\,'') of this collection
is also established.
\end{abstract}

\maketitle

\bigskip

\section{Introduction}
\label{section:int}

Let $Y$ be a smooth projective surface over $\C$.\footnote{~We
will use standard notions and results (although we recall some of
them in the text) from \cite{kodaira}, \cite{bpv}, \cite{bo-yu}
concerning compact complex surfaces.} Assume that $q(Y) = p_g(Y) =
0$ (i.\,e. irregularity and geometric genus of $Y$ are trivial).
Consider the derived category $D^b(Y)$ of coherent sheaves on $Y$.
Then Theorem 6 in \cite{vial} asserts that any numerically
exceptional collection (if exists) $A_1,\ldots,A_N$ of maximal
length $N = c_2(Y)$ of objects $A_i \in D^b(Y)$ (see
{\ref{subsection:pre-2}} and {\ref{subsection:pre-3}} below for a
setup) yields a similar collection of \emph{line bundles} (again
denoted $A_i$) on $Y$, so that the Chow lattice $N^1(Y)$ of
$1$\,-\,cycles on $Y$ is generated by $A_i$ and is unimodular with
respect to the intersection pairing. The aim of the present paper
is to apply this fact in order to
$$
\begin{array}{c}
\textit{find \emph{explicit} exceptional collections of maximal
length on certain (minimal) elliptic surfaces $Y$.}
\end{array}
$$
Our candidate for $Y$, allowing an algebro\,-\,geometric
construction, will be members of a family of the so\,-\,called
\emph{Dolgachev surfaces} (see {\ref{subsection:pre-1}}). In this
case, direct calculation yields a numerically exceptional
collection of maximal length on $Y$ (see {\ref{subsection:pre-2}}
and Theorem~\ref{theorem:exc-collection-12}), and an additional
geometric reasoning shows that the given collection is actually
exceptional (see {\ref{subsection:pre-3}}). We apply these
constructions in order to find a \emph{phantom category} in
$D^b(Y)$ (see Section~\ref{section:pha}). To our best knowledge
this is the most explicit description of the phantom phenomenon
for $D^b$ of algebraic surfaces having positive Kodaira dimension
(compare with \cite{g-s}, \cite{gkms}, \cite{gorch-orlov},
\cite{chi-lee} and \cite{tevelev-urzua}).

Section~\ref{section:log-trans} generalizes the preceding
algebro\,-\,geometric considerations and treats the problem of
maximal length exceptional collections on Dolgachev surfaces
$\frak{S}$ in \emph{transcendental} terms
(Corollary~\ref{theorem:log-trans-exc-cor} together with
\cite[Lemma 4.3]{bridgeland-maciocia} also prove the existence of
phantoms in $D^b(\frak{S})$ for a particular class of $\frak{S}$
related to $Y$). Appendix at the end contains a non\,-\,technical
discussion of relations with other work and some further
generalizations (yet this section is a bit ``\,heavy\,'' for
Introduction and an interested reader may turn to it, if need be,
after reading the rest of the paper).

\bigskip

\section{Explicit constructions}
\label{section:pre}

\refstepcounter{equation}
\subsection{Dolgachev's construction}
\label{subsection:pre-1}

Firstly, one starts with two lines $L_1,L_2\subset\P^2$ and a
plane quartic curve $Q$ (to be specified). In projective
coordinates $[x:y:z]$, let the equations of $L_1$ and $L_2$ be $z
= 0$ and $y = 0$, respectively. Further, one considers the points
\[q_1 := [1:0:0],\qquad q_5 := [0:0:1],\qquad q_9 := [1:0:1]\] and
some additional points
\[q_1 > q_2 > q_3 > q_4,\qquad q_5 > q_6 > q_7 > q_8,\qquad q_9,\] where ``\,$q_{i} > q_{i+1}$\,'' signifies that
the point $q_{i+1}$ is infinitely close to $q_i$. Then $Q$ is
chosen to have $q_5,q_6,q_7$ as double points and pass through all
other $q_i$ (see Figure~\ref{fig-1}). Furthermore, $L_1$ passes
through $q_1,q_2,q_3$, while $L_2$ passes through $q_1,q_5,q_9$.
The equation of $Q$ can be taken in the form
\[(xz + y^2)^2 - x^3z + ax^2yz + axy^3 = 0\]
for any $a\in\C$. Finally, one picks a smooth cubic curve $C$,
which passes through all $q_i$, with equation being
\[xz^2 - x^2z + a'xyz + y^2z + ay^3 = 0,\]
where $a,a'$ satisfy the relation $(a' - a)^2 - a(a' - a) + 1 =
0$.

\begin{figure}[h]
\includegraphics[scale=1.3]{pi.mp.1}
\caption{~}\label{fig-1}
\end{figure}

From the data just presented one constructs a (Halphen) pencil
\beq\label{igor-pen}\lambda yz((xz + y^2)^2 - x^3z + ax^2yz +
axy^3) + \mu(xz^2 - x^2z + a'xyz + y^2z + ay^3)^2 = 0,\qquad
\lambda,\mu\in\C,\eeq of plane curves. Let $X$ be the surface
obtained by blowing up $\P^2$ at nine points $q_i$. Then
\eqref{igor-pen} yields an elliptic fibration
$p_{\scriptscriptstyle X}: X \lra \P^1$. It has unique multiple
fiber $2C$ (we identify the cubic $C \subset \P^2$ with its proper
transform on $X$). Also, if $R_1,\ldots,R_9$ are the exceptional
curves of the contraction $X \lra \P^2$, then (the proper
transforms of) $Q,L_1$ and $L_2$ together with
$R_1,R_2,R_3,R_5,R_6,R_7$ form a singular fiber of type $I_9$ (cf.
\cite[V.7, Table 3]{bpv} and Figure~\ref{fig-2} below). Let us
determine the rest of the singular fibers of
$p_{\scriptscriptstyle X}$.

Denote by $p: \jac X\lra\P^1$ the Jacobian fibration associated
with $p_{\scriptscriptstyle X}$.

\begin{lemma}
\label{theorem:sing-fibs-jac-x} $p$ has at least $4$ singular
fibers.
\end{lemma}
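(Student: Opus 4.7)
My plan is to combine the classical Euler characteristic formula for relatively minimal elliptic fibrations with a discriminant analysis of the pencil \eqref{igor-pen}. First, $\jac X$ is itself a rational elliptic surface with a section, since it is the Jacobian of the rational elliptic surface $p_{\scriptscriptstyle X}: X \lra \P^1$ (with $X$ being $\P^2$ blown up at the nine points $q_i$). In particular $c_2(\jac X) = 12$, and
\[
\sum_{F \text{ singular fibre of } p} e(F) \= c_2(\jac X) \= 12.
\]
The $I_9$ fiber of $p_{\scriptscriptstyle X}$ is non\,-\,multiple and therefore persists in the Jacobian $p$, contributing $e(I_9) = 9$; the multiple fiber $2C$ of $p_{\scriptscriptstyle X}$, whose reduced support $C$ is smooth elliptic, becomes a smooth fiber of $p$ and contributes nothing. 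Hence the remaining singular fibers of $p$ account for exactly $3$ in total Euler characteristic.

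Since every Kodaira singular fiber has Euler characteristic at least $1$, the residual contribution of $3$ is realised by \emph{at most} three additional singular fibers. For the lemma I need the matching lower bound of three. The ``bad'' configurations to rule out, each using fewer than three fibers to account for the residual Euler characteristic $3$, are $I_3$, $III$, $I_2 + I_1$, and $II + I_1$.

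The main obstacle is precisely this case analysis. My plan is to attack it by examining the discriminant of \eqref{igor-pen} on the base $\P^1 = \{[\lambda:\mu]\}$: this is a polynomial of degree $12$ which factors as a degree\,-\,$9$ factor vanishing at the $I_9$\,-\,locus times a residual cubic. For the specified constraint $(a' - a)^2 - a(a' - a) + 1 = 0$ with generic $a$, a direct computation should show that the residual cubic has three distinct simple roots, producing three additional $I_1$ fibers and thus four singular fibers of $p$ in total. As a cross\,-\,check (and independent route), one may first establish semistability of $p$ — i.e., show every singular fiber is of type $I_n$ — by observing that the general nontrivial member of \eqref{igor-pen} acquires only nodal singularities, and then invoke Beauville's theorem that a non\,-\,isotrivial semistable elliptic fibration over $\P^1$ admits at least four singular fibers.
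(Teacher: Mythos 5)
Your Euler--number bookkeeping is sound and in fact reproduces what the paper does \emph{after} this lemma: $c_2(\jac X)=12$, the $I_9$ fiber contributes $9$, the fiber of $p$ corresponding to $2C$ is smooth, so the residual contribution is $3$, and the configurations to exclude are exactly $I_3$, $III$, $I_2+I_1$, $II+I_1$. But at precisely that point the argument stops being a proof: the exclusion of these configurations, which is the whole content of the lemma, is deferred to ``a direct computation should show'' that a residual cubic factor of the discriminant has three distinct simple roots. That computation is never performed, and it is not a formality --- it has to be carried out with the explicit equations of $Q$ and $C$ subject to $(a'-a)^2-a(a'-a)+1=0$; moreover your hedge ``for generic $a$'' would prove less than what is needed, since the lemma must hold for the surfaces actually constructed (any admissible $a$). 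Note also a minor imprecision: the degree\,-\,$12$ polynomial you invoke is the discriminant of the Weierstrass model of $\jac X$, not the discriminant of the pencil \eqref{igor-pen} of plane sextics (the latter has much higher degree and also vanishes at the non\,-\,reduced member $2C$), so even the ``degree $9$ times residual cubic'' factorization needs to be set up on the Jacobian fibration. The fallback route has the same defect: semistability of $p$ is asserted ``by observing that the general nontrivial member of \eqref{igor-pen} acquires only nodal singularities,'' but the general member is smooth, and what must be checked is that \emph{every} singular member other than the $I_9$ configuration and $2C$ is nodal (and remains of type $I_n$ after the nine blow\,-\,ups); that is again exactly the unperformed verification, and only after it would Beauville's four\,-\,fiber bound apply.

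For comparison, the paper proves the lemma with no computation at all, by a monodromy argument: since $p$ is an algebraic \emph{Jacobian} (non\,-\,isotrivial) elliptic fibration, its global monodromy group $\Gamma\subset\slg_2(\Z)$ has finite index; with at most two singular fibers $\Gamma$ would be cyclic, hence of infinite index (as $\slg_2(\Z)$ is virtually free non\,-\,abelian), and the case of exactly three singular fibers is excluded by analyzing the generators $g_1,g_2,g_3$ with the product relation. So to repair your proposal you should either actually exhibit the Weierstrass discriminant and verify that its residual cubic is separable for all admissible $(a,a')$ (which would in fact give the stronger conclusion that the extra fibers are three $I_1$'s), or replace the case analysis by a soft argument of the monodromy type used in the paper.
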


\begin{proof}
Suppose first that $p$ has at most $2$ singular fibers. In this
case, the global monodromy $\Gamma := \Gamma_{\jac X} \subset
\slg_2(\Z)$ (cf. {\ref{subsection:pha-1}} below) is isomorphic to
$\Z$, which implies that the index $[\slg_2(\Z):\Gamma]$ is
infinite. However, since $p$ is (algebraic and) Jacobian, one has
$[\slg_2(\Z):\Gamma] < \infty$ (see e.\,g. \cite[Section
3]{bo-yu}), a contradiction.

Further, if $F_1,F_2,F_3$ are the only singular fibers of $p$,
then let $g_i \in \slg_2(\Z)$ be the images of loops around
$p(F_i)$. Now, since the fundamental group $\pi_1(\P^1 \setminus
\{p(F_3)\})$ is trivial, it follows that $\Gamma$ is generated by
$g_1,g_2$ subject to the relation $g_1 \cdot g_2 = 1$, which is
again impossible.
\end{proof}\\

It follows from Lemma~\ref{theorem:sing-fibs-jac-x} that the
remaining singular fibers of $p_{\scriptscriptstyle X}$ are all of
type $I_1$ (cf. Figure~\ref{fig-2}). Indeed, logarithmic
transformation from $\jac X$ to $X$ induces an analytic
isomorphism away from the unique multiple fiber on $X$ (see
\cite[V.13]{bpv}), where the latter fiber is supported on a smooth
curve isomorphic to $C$ (cf. \eqref{igor-pen}). Then from $c_2(X)
= c_2(\jac X) = 12$ we obtain
$$
12 = c_2(X) = \sum_{b\in\P^1\, :\, p^{-1}(p) =
p_{\scriptscriptstyle X}^{-1}(b) := F_b\,\text{is
singular}}\chi_{\text{top}}(F_b),
$$
with the fiber $I_9$ having $\chi_{\text{top}} = 9$ and
$\chi_{\text{top}}(F_b) > 0$ for at least \emph{three more} $b$,
so that all the corresponding $F_b$ are of type $I_1$ and one gets
exactly three of them.

\begin{figure}[h]
\includegraphics[scale=1.5]{pii.1}
\caption{~}\label{fig-2}
\end{figure}

Fix some general fiber $F$ of $p_{\scriptscriptstyle X}$. One
computes the divisor
\[D := F + (2L_1 + R_3) + (2R_1 + L_2) + (2R_6 + R_7)\]
is divisible by $3$ in $\Pic X$. Then we can form a $3:1$ cyclic
covering $X' \lra X$ with $D$ as its ramification locus.
Furthermore, the curves $L_1,R_3,\ldots,R_7$ in $D$ can be
contracted by some $\phi: X \lra W$ to three
$\displaystyle\frac{1}{3}(1,2)$\,-\,points $O_1,O_2,O_3 \in
W$.\footnote{~This is standard via the construction of a nef and
big divisor $\mathcal{L}$ on $S$ satisfying $\mathcal{L} \cdot L_1
= \mathcal{L} \cdot R_3 = \ldots = \mathcal{L} \cdot R_7 = 0$, so
that the linear system $|m\mathcal{L}|$ is basepoint\,-\,free, $m
\gg 1$ (compare with \cite[Section 3]{keum}).} The surface $W$ is
algebraic and $X' \lra X$ descends to a $3:1$ cyclic covering
$\psi: Y \lra W$ ramified at $O_i$ and $F$ (we denote every cycle
$\psi^*\phi_*Z$ on $Y$, where $Z\in\Pic X$, again by $Z$). Namely,
we have $X' = \Spec_X\displaystyle\bigoplus_{i=0}^2\OO_X(-iD)$,
whereas $Y =
\Spec_W\displaystyle\bigoplus_{i=0}^2\phi_*\OO_X(-iD)$ for the
coherent sheaves $\phi_*\OO_X(-iD)$ on $W$ (cf.
Figure~\ref{fig-3}).

\bigskip

\begin{figure}[h]
\includegraphics[scale=0.9]{pi1.mp.1}
\caption{~}\label{fig-3}
\end{figure}

\refstepcounter{equation}
\subsection{The Picard lattice of $Y$}
\label{subsection:pre-2}

Let $\mu_3$ be the Galois group of the above covering $\psi: Y
\lra W$. Note that the surface $Y$ is smooth because
$\phi_*F\cap\Sing W = \emptyset$ and locally analytically over
every $O_i \in W$, for $\Sing W = \{O_1,O_2,O_3\}$, morphism
$\psi$ coincides with the quotient $\C^2 \lra \C^2 \slash \mu_3$.
Construction in {\ref{subsection:pre-1}} also provides an evident
elliptic fibration $p_{\scriptscriptstyle Y}: Y \lra \P^1$, with
fibers $\Phi_1,\ldots,\Phi_4$ of type $I_3$ and two multiple
fibers supported on the smooth elliptic curves $C$ and $F$,
respectively, such that $2C \equiv 3F \equiv \Phi_i$ (numerically
on $Y$) --- these are the only singular fibers of
$p_{\scriptscriptstyle Y}$ (cf.
Lemma~\ref{theorem:sing-fibs-jac-x}). Recall that
\beq\label{can-bun} K_Y = 2F - C \equiv \frac{1}{6}\,\Phi_i \eeq
(see e.\,g. \cite[V.12]{bpv}).

\begin{proposition}
\label{theorem:curves-on-y} Let $R_2,R_5,\ldots,G_2$ be the curves
as on Figure~\ref{fig-3}. Then their intersection data is given by
Table A below.
\end{proposition}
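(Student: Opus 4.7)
The plan is to reduce each intersection on $Y$ to a calculation on the blown\,-\,up plane $X$, and ultimately to incidences of plane curves at the points $q_1,\ldots,q_9$ described in {\ref{subsection:pre-1}}. First I would partition the curves displayed in Figure~\ref{fig-3} into those which, under the authors' convention $\psi^*\phi_*Z\mapsto Z$, come from components of singular fibres of $p_{\scriptscriptstyle X}$ or of the branch divisor $D$, and those which arise as new curves supported over the branch locus $\phi_*F\cup\{O_1,O_2,O_3\}$ of $\psi$. For each curve I would record (i)~its image on $X$, (ii)~its self\,-\,intersection on $X$ (read off from $A_X^2 = A_{\P^2}^2 - \sum m_{q_i}(A)^2$ for proper transforms from $\P^2$, or $-1$/$-2$ for the exceptional curves $R_j$ depending on whether $q_j$ carries an infinitely close point), and (iii)~whether it lies over the branch locus of $\psi$.

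The main tool is the projection formula for the cyclic triple cover $\psi\colon Y\to W$: for two curves $A,B\subset W$ not contained in the branch locus one has $\psi^*A\cdot\psi^*B = 3(A\cdot B)$, while if $A$ is a branch component then $\psi^*A = 3\tilde A$, so $\tilde A^2 = \tfrac{1}{3}A^2$ and more generally $\tilde A\cdot\tilde B = \tfrac{1}{3}(A\cdot B)$ when $B$ is also a branch component. Combined with the standard blow\,-\,up formulas relating intersections on $X$ and on $W$ through $\phi$, and with the observation that the three $\tfrac{1}{3}(1,2)$\,-\,singular points $O_i\in W$ lift to \emph{smooth} points of $Y$ (since locally $\psi$ is the quotient $\C^2\to\C^2/\mu_3$, so the chain of curves $L_1,R_3,\ldots,R_7$ contracted by $\phi$ simply disappears on $Y$), this pins down each entry of Table~A. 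All self\,-\,intersections can moreover be cross\,-\,checked against adjunction $2g-2 = A^2 + K_Y\cdot A$ using \eqref{can-bun} and the known arithmetic genera of the components (smooth rational for the $I_3$\,-\,fibre components $R_j$, genus one for $C$ and $F$).

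The main obstacle is purely bookkeeping: the three distinct birational operations --- the blow\,-\,up $X\to\P^2$, the contraction $\phi\colon X\to W$, and the triple cover $\psi\colon Y\to W$ --- each rescale intersection numbers differently, and the paper's convention $\psi^*\phi_*Z\mapsto Z$ hides these rescalings inside the notation. A careful preliminary tally of ``\,which component is branch for $\psi$, which is contracted by $\phi$, which passes through some $O_i$\,'' must therefore be drawn up before any arithmetic begins; once this table of data is in hand, every entry of Table~A follows by direct substitution into the two formulas above.
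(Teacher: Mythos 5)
Your overall skeleton (push everything down to $W$, pull back along the degree\,-\,$3$ cover, and do the arithmetic on $X$) is indeed the paper's strategy, but the two formulas you claim suffice are both about $\psi$, and neither of them carries the actual content of Table~A. The first genuine gap is the $\phi$\,-\,step. The contraction $\phi$ collapses the two\,-\,curve chains $(L_1,R_3)$, $(R_1,L_2)$, $(R_6,R_7)$ to $\frac{1}{3}(1,2)$ points, so intersection numbers on $W$ are $\Q$\,-\,valued and must be computed with the Mumford (relatively trivial) pullback: e.g. the paper replaces $R_8$ by $\gamma = R_8 + \frac{2}{3}R_7 + \frac{1}{3}R_6$ and gets $(R_8^2)_Y = 3(\gamma^2)_X = -1$. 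There is no ``standard blow\,-\,up formula'' here, and the fractional corrections coming from curves through the $O_i$ are exactly what makes the answer $-1$ rather than the $3\cdot(-1) = -3$ that your projection formula alone would give if the intersections were read off on $X$. Note also that your branch\,-\,component rule $\tilde A\cdot\tilde B = \frac{1}{3}A\cdot B$ is never applicable: the only divisorial branch component of $\psi$ is $F$, which does not appear in the table; $R_2,R_5,R_8$ merely pass through the isolated branch points $O_i$, and for such curves the correct rule is $3(\,\cdot\,)_W$ with the $W$\,-\,intersection computed via the Mumford pullback, not the $\frac{1}{3}$ rule — your item (iii) tally conflates these two situations.

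The second gap concerns $H_1,H_2,F_1,F_2,G_1,G_2$. These are not ``new curves supported over the branch locus'': they are components of the preimages of the three $I_1$ fibres of $p_{\scriptscriptstyle W}$, which lie away from the branch locus; each such preimage is a connected \'etale triple cover, i.e.\ an $I_3$ cycle whose three components are permuted cyclically by $\mu_3$. In particular they are not of the form $\psi^*\phi_*Z$, so neither of your formulas computes any entry in their rows. Their self\,-\,intersections and mutual intersections come from the $I_3$ configuration (and you should say why the preimage is connected, rather than three disjoint copies of $I_1$), while the entries $R_8\cdot H_1 = R_8\cdot H_2 = 2$ (and likewise for the $F$'s and $G$'s) need the paper's extra argument: $R_8$ is $\mu_3$\,-\,invariant (indeed irreducible on $Y$), it meets a fibre in $\Phi_i\cdot R_8 = 6K_Y\cdot R_8 = 6$ points, and $\mu_3$ distributes this equally among the three permuted components. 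The adjunction check you propose is a fine sanity test, but as written your proof would not produce either the fractional corrections at the $O_i$ or the $H,F,G$ rows, which are precisely the nontrivial parts of Table~A.
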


\begin{proof}
Note that the cycle
\[\gamma := R_8 + \frac{2}{3}\,R_7 + \frac{1}{3}\,R_6\]
on $X$ is relatively trivial with respect to $\phi: X \lra W$ (for
example, $\gamma \cdot L_2 = 0$ because $L_2 \cdot R_8 = L_2 \cdot
R_7 = L_2 \cdot R_6 = 0$, and so on). Then we have
\[(R_8^2) := 3(\gamma^2) = -1\]
on $Y$ by definition. It also follows from \eqref{can-bun} that
\[K_Y \cdot R_8 := 3(\frac{1}{6}\,\Phi_i \cdot \gamma) = 1.\]
Similarly, the cycle \[\delta :=R_5 + \frac{2}{3}\,L_2 +
\frac{1}{3}\,R_1\] is trivial over $W$, which gives
\[R_5 \cdot R_8 := 3(\delta \cdot \gamma) = 1\]
on $Y$. In particular, the curve $R_8 \subset Y$ is irreducible,
which implies that $R_8 \cdot H_1 = R_8 \cdot H_2 = 2$ because
$R_8$ is $\mu_3$\,-\,invariant and $\mu_3$ permutes $H_i$. Other
intersections are treated in exactly the same way and we skip
their computation.
\end{proof}\\

\begin{table}[h]

\begin{center}

\begin{tabular}[c]{c|c|c|c|c|c|c|c|c|c|c|}
& $K_Y$ & $R_2$ & $R_5$ & $R_8$ & $H_1$ & $H_2$ & $F_1$ & $F_2$ & $G_1$ & $G_2$\\
\hline $K_Y$   & 0 & 0 & 0 & 1 & 0 & 0 & 0 & 0 & 0 & 0 \\
\hline $R_2$ & 0 & -2 & 1 & 0 & 0 & 0 & 0 & 0 & 0 & 0 \\
\hline $R_5$ & 0 & 1 & -2 & 1 & 0 & 0 & 0 & 0 & 0 & 0 \\
\hline $R_8$ & 1 & 0 & 1 & -1 & 2 & 2 & 2 & 2 & 2 & 2 \\
\hline $H_1$ & 0 & 0 & 0 & 2 & -2 & 1 & 0 & 0 & 0 & 0 \\
\hline $H_2$ & 0 & 0 & 0 & 2 & 1 & -2 & 0 & 0 & 0 & 0 \\
\hline $F_1$ & 0 & 0 & 0 & 2 & 0 & 0 & -2 & 1 & 0 & 0 \\
\hline $F_2$ & 0 & 0 & 0 & 2 & 0 & 0 & 1 & -2 & 0 & 0 \\
\hline $G_1$ & 0 & 0 & 0 & 2 & 0 & 0 & 0 & 0 & -2 & 1 \\
\hline $G_2$ & 0 & 0 & 0 & 2 & 0 & 0 & 0 & 0 & 1 & -2\\
\hline

\end{tabular}

\end{center}

\bigskip
\text{Table {\rm A}}

\end{table}

\bigskip

\begin{notation}
In what follows, for notation's simplicity, we will be usually
neglecting the use of $\equiv$ , thus identifying any element from
$\Pic Y$ with its class in $H^2(Y,\Z)$. However, in some cases
with a possibility of confusion, we will distinguish between
$\equiv$ and $=$. We will also specify sometimes the linear
equivalence $\sim$ between Cartier divisors.
\end{notation}

\begin{lemma}
\label{theorem:3-torsion-classes} The classes
\[
\begin{array}{ccc}
\bigskip
K_Y,\ R_2,\ R_5,\ R_8,\ H_1,\ H_2,\ F_1,-\frac{1}{3}\,K_Y -
\frac{1}{3}\,R_2 + \frac{1}{3}\,R_5 - \frac{1}{3}\,F_1 +
\frac{1}{3}\,F_2 -
\frac{1}{3}\,H_1 + \frac{1}{3}\,H_2,\\
\bigskip
-\frac{1}{3}\,K_Y - \frac{1}{3}\,R_2 + \frac{1}{3}\,R_5 +
\frac{1}{3}\,F_1 - \frac{1}{3}\,F_2 + \frac{1}{3}\,G_1 -
\frac{1}{3}\,G_2, \ G_2
\end{array}
\]
form a basis of the lattice $\Pic Y$.
\end{lemma}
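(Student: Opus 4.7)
The plan is to prove that the ten listed classes form a $\Z$\,-\,basis of the rank\,-\,$10$ Picard lattice $\Pic Y$. Since $Y$ is a simply connected Dolgachev surface with $q(Y)=p_g(Y)=0$ and $c_2(Y)=12$, one has $b_2(Y)=10$ and $\Pic Y = H^2(Y,\Z)$ is a unimodular lattice of rank $10$ by Poincar\'e duality. Therefore it suffices to check that \textbf{(i)} all ten classes lie in $\Pic Y$, and \textbf{(ii)} their $10\times 10$ Gram matrix, computed via bilinear extension of Table A, has determinant $\pm 1$; a sublattice of discriminant $\pm 1$ in a unimodular lattice of the same rank must coincide with the whole lattice.

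For \textbf{(i)} the eight classes $K_Y,R_2,R_5,R_8,H_1,H_2,F_1,G_2$ are represented by honest divisors visible on Figure~\ref{fig-3}. For the two fractional classes
\[
\alpha := -\tfrac13(K_Y+R_2-R_5+F_1-F_2+H_1-H_2), \qquad
\beta  := -\tfrac13(K_Y+R_2-R_5-F_1+F_2-G_1+G_2),
\]
I would produce Cartier representatives by exploiting the $\mu_3$\,-\,cover $\psi:Y\to W$ together with the $I_3$\,-\,fibre triangles of $p_{\scriptscriptstyle Y}$. Each $I_3$\,-\,fibre $\Phi_i$ is a triangle of $(-2)$\,-\,curves and $6K_Y \equiv \Phi_i$ by \eqref{can-bun}; suitable differences of fibre components combined with appropriate multiples of $K_Y$ give $3$\,-\,divisible Weil divisors on $Y$, and the explicit integer combinations underlying $3\alpha$ and $3\beta$ are to be matched to such $3$\,-\,divisible combinations, forcing $\alpha,\beta\in\Pic Y$.

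For \textbf{(ii)} I first compute the Gram matrix $M'$ of the auxiliary integer sublattice spanned by $\{K_Y,R_2,R_5,R_8,H_1,H_2,F_1,F_2,G_1,G_2\}$. The block structure is transparent: four $A_2$\,-\,type $2\times 2$ blocks of determinant $3$ on the pairs $\{R_2,R_5\},\{H_1,H_2\},\{F_1,F_2\},\{G_1,G_2\}$, glued by $R_8$ (whose cross\,-\,intersections appear in the last column of Table A) and by $K_Y$ (which only pairs with $R_8$). A Schur complement evaluation with respect to the $R_8,K_Y$ pair yields $\det M' = \pm 81 = \pm 3^4$. Passing to the proposed basis means replacing $F_2$ by $\alpha$ and $G_1$ by $\beta$; the resulting change\,-\,of\,-\,basis matrix is upper triangular up to the identity block, with the remaining $2\times 2$ corner having determinant $\pm 1/9$, so the Gram matrix of the proposed basis has determinant $(1/9)^2 \cdot (\pm 81) = \pm 1$, as required.

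The main obstacle is step \textbf{(i)}: exhibiting explicit Cartier representatives of $\alpha$ and $\beta$. Their formal expressions mix $K_Y$ (which is only $\Q$\,-\,proportional to $\Phi_i$) with differences of $I_3$\,-\,fibre components, so one must use the $\mu_3$\,-\,equivariance on $Y$ and the structure of the triple cover from Section~\ref{subsection:pre-1} to show that these $\tfrac13$\,-\,combinations descend to integer classes in $\Pic Y$. Step \textbf{(ii)} is then a routine determinantal bookkeeping exercise using Table A.
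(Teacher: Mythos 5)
Your step \textbf{(ii)} is sound and its numerics agree with the paper: the Gram matrix of $\{K_Y,R_2,R_5,R_8,H_1,H_2,F_1,F_2,G_1,G_2\}$ from Table A has determinant $-81$, the substitution of $\alpha,\beta$ for $F_2,G_1$ is a base change of determinant $1/9$ (the relevant $2\times 2$ corner is $\left(\begin{smallmatrix}1/3 & -1/3\\ 0 & 1/3\end{smallmatrix}\right)$), so the proposed classes have unimodular Gram matrix, and a full-rank sublattice of discriminant $\pm 1$ inside the unimodular lattice $\Pic Y=H^2(Y,\Z)$ must be everything. The genuine gap is exactly the step you flag yourself as ``the main obstacle'': you never actually show that $\alpha$ and $\beta$ lie in $\Pic Y$. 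Without that, the determinant computation only shows that these classes span a unimodular lattice inside $\Pic Y\otimes\Q$ containing the integral sublattice with index $9$; it does not place that lattice inside $\Pic Y$, and there are in general several index-$9$ overlattices of a lattice of discriminant $-81$, so the conclusion does not follow. Your suggested route --- matching $3\alpha$ and $3\beta$ with $3$-divisible integral combinations built from $6K_Y\equiv\Phi_i$, the $I_3$-triangles and the $\mu_3$-cover --- is plausible in spirit, but no explicit $3$-divisible representative is exhibited, and producing one is precisely the content that is missing; as written, step \textbf{(i)} is a plan, not a proof.

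For comparison, the paper does not produce Cartier representatives of the fractional classes at all. It computes the same determinant $-81$, writes down the explicit transition matrix $M$ with $\det M=1/9$, checks by direct computation that $M^tAM\in\mathrm{SL}_{10}(\Z)$ (so the proposed classes span an \emph{odd} unimodular lattice of signature $(1,9)$), and then identifies this lattice with $\Pic Y$ by appealing to the uniqueness of odd unimodular lattices of that signature. So the paper's identification is lattice-theoretic rather than divisorial; your approach, if completed, would be more geometric (exhibiting the $3$-torsion-type classes as honest line bundles coming from the triple-cover construction), but in its present form the key membership statement $\alpha,\beta\in\Pic Y$ remains unproved.
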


\begin{proof}
Note that the Gramm matrix $A$ of the basis
$\left\{K_Y,R_2,R_5,R_8,H_1,H_2,F_1,F_2,G_1,G_2\right\}$ of $\Pic
Y \otimes \R$, given by Table A, has determinant $-81$. In turn,
the matrix $M$, transforming this basis to the one proposed by
lemma, is
\[\left(%
\begin{array}{cccccccccc}
  1 & 0 & 0 & 0 & 0 & 0 & 0 & -\frac{1}{3} & -\frac{1}{3} & 0 \\
  0 & 1 & 0 & 0 & 0 & 0 & 0 & -\frac{1}{3} & -\frac{1}{3} & 0 \\
  0 & 0 & 1 & 0 & 0 & 0 & 0 & \frac{1}{3} & \frac{1}{3} & 0 \\
  0 & 0 & 0 & 1 & 0 & 0 & 0 & 0 & 0 & 0 \\
  0 & 0 & 0 & 0 & 1 & 0 & 0 & -\frac{1}{3} & 0 & 0 \\
  0 & 0 & 0 & 0 & 0 & 1 & 0 & \frac{1}{3} & 0 & 0 \\
  0 & 0 & 0 & 0 & 0 & 0 & 1 & -\frac{1}{3} & \frac{1}{3} & 0 \\
  0 & 0 & 0 & 0 & 0 & 0 & 0 & \frac{1}{3} & -\frac{1}{3} & 0 \\
  0 & 0 & 0 & 0 & 0 & 0 & 0 & 0 & \frac{1}{3} & 0 \\
  0 & 0 & 0 & 0 & 0 & 0 & 0 & 0 & -\frac{1}{3} & 1 \\
\end{array}%
\right)\] and has $\det M = 1/9$.\footnote{~All linear algebra
computations in the paper are carried via PARI/GP.} Then it is
immediate to check that $M^tAM \in \text{SL}_{10}(\Z)$ for the
transposed matrix $M^t$ of $M$. This means the claimed basis
yields a type I unimodular lattice of signature $(1,9)$. Finally,
the latter coincides with $\Pic Y$ by uniqueness of such lattices,
which concludes the proof.
\end{proof}\\

\refstepcounter{equation}
\subsection{One exceptional collection on $Y$}
\label{subsection:pre-3}

Consider the bounded derived category $D^b(Y)$ of coherent sheaves
on $Y$ (for the definitions that follow one may take $Y$ to be any
smooth projective variety). This is a triangulated category, so
that the notions of exceptional object/collection, semiorthogonal
decomposition, etc. make sense (see \cite{bo-ka} and \cite{bo} for
foundations).

Recall that an object $A \in D^b(Y)$ is called \emph{exceptional}
if $\Hom(A,A) = \C$ and $\Ext^i(A,A) = 0$ for all $i \ne 0$. More
generally, a collection $A_1,\ldots,A_N$ of exceptional objects is
called exceptional if $\Ext^{\bullet}(A_i,A_j) = 0$ for all $i >
j$. Given any such collection, one has a \emph{semiorthogonal
decomposition (SOD)}
\[D^b(Y) = \left<\mathcal{A},A_1,\ldots,A_N\right>,\]
where $\mathcal{A} := \left<A_1,\ldots,A_N\right>^{\bot} :=
\left\{A \in D^b(Y)\ \vert \ \Ext^{\bullet}(A_i,A) = 0, 1\le i \le
N\right\}$. The collection is called \emph{full} if $\mathcal{A} =
0$. (N.\,B. In the previous setting, $A_i$ actually denotes the
\emph{category} generated by the corresponding object, $1 \le i
\le N$. The notions of SOD and of being full extend verbatim to
the case of arbitrary strictly full triangulated subcategories
$A_i$.)

\begin{remark}[{cf. \cite[Section 2]{gkms}} or \cite{sosna}]
\label{remark:exc-num-exc} Taking the \emph{Chern character}
$\text{ch}$ of any $A \in D^b(Y)$ delivers a certain numerical
obstruction for being exceptional. Namely, the category $D^b(Y)$
becomes replaced by the Abelian group $K_0(Y) :=
K_0(D^b(Y))\ni\text{ch}(A)$ (the \emph{K\,-\,group} of $Y$) and
the complex $\Ext^{\bullet}(\star,\star)$ is replaced by
$\chi(\star,\star) = \displaystyle\sum_k
(-1)^k\dim\Ext^k(\star,\star)$, which is a bilinear form on
$K_0(Y)$. Then the above objects $A_1,\ldots,A_N$ yield a
\emph{numerically exceptional collection}
$\text{ch}(A_1),\ldots,\text{ch}(A_N)$, so that the $N\times
N$\,-\,matrix with entries $\chi(A_i,A_j)$ is
upper\,-\,triangular. Furthermore, if the collection is
(numerically) full, then $K_0(Y)$ is a free group of rank $N =
\dim HH_{\bullet}(D^b(Y)) = c_2(Y)$, $\text{ch}(A_i)$ generate
$K_0(Y)$ and the form $\chi(\star,\star)$ is unimodular. (Note
also that our surface $Y$ admits a numerically exceptional
collection of \emph{maximal length} $c_2(Y) = 12$ according to
\cite[Theorem 6]{vial}. The latter is constructed in
\cite[3.1]{vial} by first finding a diagonal basis
$A_2,\ldots,A_{11}$ for the intersection form on $\Pic Y$ (cf.
Lemma~\ref{theorem:3-torsion-classes} above) such that $K_Y =
\displaystyle\sum_{i = 2}^{10}A_i -3A_{11}$, and then taking $A_1
:= \OO_Y$, $A_{12} := 2A_{11}$ to be the rest of the pertinent
collection. Finally, we observe that if the collection
$A_1,\ldots,A_{12}$ is indeed exceptional, then the
HKR\,-\,isomorphism $HH_{k}(Y) \simeq \bigoplus_{q - p = k}
H^{p,q}(Y)$ yields $HH_{\bullet}(\mathcal{A}) = 0$ for higher
Hochschield homology, whereas the equality $K_0(\mathcal{A}) = 0$
is clear.)
\end{remark}

\begin{theorem}
\label{theorem:exc-collection-12} The line bundles
\[
\begin{array}{ccc}
\bigskip
A_1 := \OO_Y, \ A_2 := -\frac{8}{3}\,K_Y - \frac{2}{3}\,R_2 -
\frac{1}{3}\,R_5 - R_8 - \frac{2}{3}\,H_1 + \frac{2}{3}\,H_2 -
\frac{2}{3}\,F_1 + \frac{2}{3}\,F_2,\\
\bigskip
A_3 := \frac{2}{3}\,K_Y - \frac{1}{3}\,R_2 + \frac{1}{3}\,R_5 -
R_8 - H_1 - \frac{2}{3}\,F_1 + \frac{2}{3}\,F_2 - \frac{2}{3}\,G_1
- \frac{1}{3}\,G_2,\\
\bigskip
A_4 := \frac{5}{3}\,K_Y - \frac{1}{3}\,R_2 - \frac{2}{3}\,R_5 -
R_8 - H_1 - \frac{2}{3}\,F_1 + \frac{2}{3}\,F_2 - \frac{2}{3}\,G_1
- \frac{1}{3}\,G_2,\\
\bigskip
A_5 := -R_8 - \frac{1}{3}\,H_1 + \frac{1}{3}\,H_2 -
\frac{2}{3}\,F_1
+ \frac{2}{3}\,F_2 - \frac{1}{3}\,G_1 - \frac{2}{3}\,G_2,\\
\bigskip
A_6 := -\frac{5}{3}\,K_Y - \frac{2}{3}\,R_2 - \frac{1}{3}\,R_5 -
R_8 - \frac{1}{3}\,H_1 + \frac{1}{3}\,H_2 - F_1
+ F_2 - \frac{2}{3}\,G_1 - \frac{1}{3}\,G_2,\\
\bigskip
A_7 := -\frac{5}{3}\,K_Y - \frac{2}{3}\,R_2 - \frac{1}{3}\,R_5 -
R_8 - \frac{1}{3}\,H_1 + \frac{1}{3}\,H_2 + F_2 - \frac{2}{3}\,G_1
- \frac{1}{3}\,G_2,\\
\bigskip
A_8 := 2K_Y + \frac{1}{3}\,H_1 - \frac{1}{3}\,H_2 - \frac{1}{3}\,F_1 - \frac{2}{3}\,F_2 + \frac{1}{3}\,G_1 + \frac{2}{3}\,G_2,\\
\bigskip
A_9 := -2K_Y - R_8 - \frac{1}{3}\,H_1 + \frac{1}{3}\,H_2 -
\frac{2}{3}\,F_1 + \frac{2}{3}\,F_2 - \frac{1}{3}\,G_1 +
\frac{1}{3}\,G_2,\\
\bigskip
A_{10} := \frac{1}{3}\,K_Y - \frac{2}{3}\,R_2 - \frac{1}{3}\,R_5 -
R_8 -
\frac{2}{3}\,H_1 - \frac{1}{3}\,H_2 - \frac{2}{3}\,F_1 + \frac{2}{3}\,F_2, \\
\bigskip
A_{11} := 6K_Y + R_8 + H_1 - 2F_2,\\
\bigskip
A_{12} := \frac{7}{3}\,K_Y - \frac{2}{3}\,R_2 - \frac{1}{3}\,R_5 -
R_8 - \frac{1}{3}\,H_1 + \frac{1}{3}\,H_2 - F_1 - \frac{2}{3}\,G_1
- \frac{1}{3}\,G_2
\end{array}\]

form an exceptional collection in $D^b(Y)$ of length
$12$.\footnote{~The fact that this collection is
\emph{numerically} exceptional follows by direct computation with
PARI/GP. Note however that in our special situation this
construction of a maximal length numerically exceptional
collection is different from that described in
Remark~\ref{remark:exc-num-exc}.}
\end{theorem}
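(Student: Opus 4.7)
I would split the argument into two stages: the numerical exceptionality announced in the footnote, and the upgrade to genuine $\Ext$-vanishing. For the first stage, given any line bundle $L=A_j-A_i$ on $Y$, Riemann--Roch on the surface reads
$$\chi(A_i,A_j) \= \chi(Y,L) \= \chi(\OO_Y)+\tfrac12\, L\cdot(L-K_Y),$$
and both $L^2$ and $L\cdot K_Y$ are computable from the explicit formulas for the $A_i$ together with Table~A (with $\chi(\OO_Y)=1$ following from $p_g(Y)=q(Y)=0$). A direct linear-algebra computation should then yield $\chi(A_i,A_j)=0$ for $i>j$ and $\chi(A_i,A_i)=1$, as the footnote asserts.

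For actual exceptionality the diagonal is automatic: each $A_i$ being a line bundle, $\Ext^k(A_i,A_i)\iso H^k(Y,\OO_Y)$, which vanishes for $k>0$ by $q(Y)=p_g(Y)=0$. For a pair $(i,j)$ with $i>j$, I would set $L:=A_j-A_i$; Serre duality identifies $\Ext^2(A_i,A_j)$ with $H^0(Y,K_Y-L)^*$, so combined with the numerical exceptionality just established it suffices to prove
$$H^0(Y,L)\=0 \qquad\text{and}\qquad H^0(Y,K_Y-L)\=0,$$
since these together with $\chi(L)=0$ also force $H^1(Y,L)=0$.

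The remaining task, and the core of the proof, is a finite collection of explicit $H^0$-vanishings. I would attack them via intersection-theoretic obstructions coming from the fibration $p_{\scriptscriptstyle Y}:Y\to\P^1$. Since the general fiber $F$ is nef, any effective divisor $D$ satisfies $D\cdot F\ge 0$; so whenever $L\cdot F<0$ one gets $H^0(Y,L)=0$ for free, and the same argument applied to $K_Y-L$ (using $K_Y\cdot F=0$) handles the Serre-dual side. For the residual classes satisfying $L\cdot F=0$ the divisor $L$ is numerically a $\Q$-combination of fiber components of $p_{\scriptscriptstyle Y}$, so using the explicit description of the singular fibers from {\ref{subsection:pre-1}}, together with $2C\numeq 3F\numeq\Phi_k$ and $K_Y\numeq\tfrac16\Phi_k$ from~\eqref{can-bun}, each such $L$ can be rewritten in terms of irreducible fiber components and tested directly for effectivity.

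The hard part, as I see it, will be this residual case analysis rather than the Riemann--Roch bookkeeping: it is finite and mechanical, but must be carried out against both $L$ and $K_Y-L$ simultaneously for all $66$ pairs with $i>j$, and the fractional coefficients in the expressions for the $A_i$ make it natural to exploit the $\mu_3$-equivariance inherited from $\psi:Y\to W$ as an additional cross-check on effectivity.
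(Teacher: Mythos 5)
Your reduction to the two vanishings $H^0(Y,L)=0$ and $H^0(Y,K_Y-L)=0$ (with $H^1$ killed by $\chi=0$) is exactly the paper's starting point, and the Riemann--Roch bookkeeping for numerical exceptionality is fine. But your case analysis has a genuine gap: it only disposes of classes with $L\cdot F<0$ (nefness), the Serre-dual classes with $(K_Y-L)\cdot F<0$, and the ``residual'' classes with $L\cdot F=0$, which you treat as vertical and mechanical. This leaves untouched the case $L\cdot F>0$, and that is precisely where almost all of the required vanishings live: for example, for $j=1$ one needs $H^0(Y,A_i^{*})=0$, and here $A_i^{*}\cdot K_Y=1$, hence (since $F\numeq 2K_Y$ by \eqref{can-bun}) $A_i^{*}\cdot F=2>0$. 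Nefness of the fiber gives nothing, and a hypothetical effective $Z\sim A_i^{*}$ is \emph{not} a combination of fiber components --- it has a horizontal part $Z_h$, so ``testing directly for effectivity'' against the fiber decomposition is not available. The same happens for the pairs treated in Lemmas~\ref{theorem:exc-collection-12-a-3} and \ref{theorem:exc-collection-12-a-4}, where the relevant classes satisfy $Z\cdot K_Y=1$ or $2$.

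What the paper has to do in this positive-degree case is the real content of the proof, and nothing in your proposal replaces it: one writes $Z=Z_h+Z_v$, bounds $(Z_h^2)$ and $\deg\omega_{Z_h}$ via Lemma~\ref{theorem:aux-a-3} and an exhaustive exclusion of vertical components fiber by fiber, concludes that the normalization of $Z_h$ would have to be rational or elliptic, and then rules this out by Proposition~\ref{theorem:not-e-or-p-1}: since $Z_h$ is not $\mu_3$-invariant, $\psi^{*}\psi(Z_h)$ splits under the $3:1$ covering $\psi:Y\to W$, which forces the restriction of the elliptic parameter $f|_{Z_h}$ (of degree $6$ on the fiber class) to be a cube of a rational function --- a contradiction. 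Your closing remark about ``exploiting $\mu_3$-equivariance as a cross-check'' gestures in this direction but is not an argument; without some version of this covering/cube-root obstruction (or another mechanism excluding horizontal components of such low genus) the vanishings with $L\cdot F>0$ are unproved. Note also the pair $(i,j)=(12,11)$, where $(Z^2)=0$, $Z\cdot K_Y=2$: the paper needs an extra argument (a unique non-split rank-$2$ extension trivialized fiberwise and shown to split by monodromy invariance) that again falls outside your three cases.
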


\begin{proof}
Recall that $\Ext^{\bullet}(A_i,A_j) := H^{\bullet}(Y,A_i^*\otimes
A_j)$. Here $A_i^*$ denotes the dual, or negative, of $A_i$ (we
will not distinguish between divisors and the corresponding line
bundles). On proving the claimed vanishings for all $i
> j$ we will proceed in a case\,-\,by\,-\,case manner starting
with $j = 1$.

One easily checks via Proposition~\ref{theorem:curves-on-y} that
\begin{equation}
\nonumber \chi(A_i,A_1) := \chi(Y,A_i^*) = 0
\end{equation}
for all $1 < i \le 12$. Hence in order for
$\Ext^{\bullet}(A_i,A_1)$ to vanish it suffices to show that
$H^0(Y,A_i^*) = H^2(Y,A_i^*) = 0$.

\begin{lemma}
\label{theorem:aux-a-1} $H^2(Y,A_i^*) = 0$ for all $i \ne 11$ and
$H^0(Y,A_8^*) = H^0(Y,A_{11}^*) = 0$.
\end{lemma}

\begin{proof}
Observe that $H^2(Y,A_i^*) = H^0(Y,K_Y + A_i)$ by Serre duality.
Now, if $H^2(Y,A_i^*)\ne 0$ for some $i \ne 8,11$, then there
exists a curve $Z \sim K_Y + A_i$ on $Y$. At the same time, we
have $Z \cdot K_Y = -1$ by definition of $A_i$ (cf.
Proposition~\ref{theorem:curves-on-y}), a contradiction. Same
argument shows that $H^0(Y,A_{11}^*) = 0$.

Further, if $i = 8$ and $H^2(Y,A_8^*)\ne 0$, then there exists a
curve \[Z \sim K_Y + A_8 = 3K_Y + \frac{1}{3}\,H_1 -
\frac{1}{3}\,H_2 - \frac{1}{3}\,F_1 - \frac{2}{3}\,F_2 +
\frac{1}{3}\,G_1 + \frac{2}{3}\,G_2.\] Note that $Z \cdot H_1 = Z
\cdot G_2 = -1$ and hence $Z_1 := Z - H_1 - G_2$ is another curve.
Moreover, we have $Z_1 \cdot K_Y = 0$, which implies that $Z_1$
belongs to fibers of the morphism $p_{\scriptscriptstyle Y}: Y
\lra \P^1$ (cf. {\ref{subsection:pre-2}}). On the other hand, from
\[
Z_1 \cdot R_8 = 3K_Y \cdot R_8 - H_1 \cdot R_8 - G_2 \cdot R_8 =
-1\] we get $R_8 \subseteq Z_1$, a contradiction.

Finally, for $A_8^* \cdot R_8 = -2$ and $A_8^* \cdot K_Y = 0$ one
obtains $H^0(Y,A_8^*) = 0$ exactly as above, which concludes the
proof.
\end{proof}\\

Fix some $i \ne 11$ and suppose that $H^0(Y,A_i^*) \ne 0$ (cf.
Lemma~\ref{theorem:aux-a-1}). Let $Z \sim A_i^*$ be a curve. We
compute $Z \cdot K_Y = 1$ and $(Z^2) = -1$ by construction. Then
there exists a \emph{unique} irreducible component $Z_h \subseteq
Z$ not contracted by the morphism $p_{\scriptscriptstyle Y}$.

Let us establish a couple of supplementary results:

\begin{lemma}
\label{theorem:aux-a-3} In the previous setting, if $Z_h \ne Z$,
then the curve $Z_v := Z - Z_h$ belongs to \emph{reducible} fibers
of $p_{\scriptscriptstyle Y}$ and satisfies $(Z_v^2) < 0$, $Z_v
\cdot Z \le 0$. Furthermore, we have $\deg
\omega_{\scriptscriptstyle Z_h} \ge -2$ for the dualizing sheaf of
$Z_h$, with equality holding if and only if $Z_h \simeq \P^1$.
\end{lemma}

\begin{proof}
The fact that $Z_v$ belongs to the fibers of
$p_{\scriptscriptstyle Y}$ follows from \eqref{can-bun} and $Z_v
\cdot K_Y = 0$ for $Z_h \cdot K_Y = 1$. Suppose next that $Z_v
\cdot Z
> 0$. Then, since the intersection form is even and positive
semi\,-\,definite on the components of the fibers of
$p_{\scriptscriptstyle Y}$, we obtain $(Z_v^2) \le 0$,
\[(Z_h^2) = (Z^2) - 2Z \cdot Z_v + (Z_v^2) < -3\]
and \[\deg \omega_{\scriptscriptstyle Z_h} = (K_Y + Z_h) \cdot Z_h
< -2.\] On the other hand, we have $\deg
\omega_{\scriptscriptstyle Z_h} = 2h^1(Z_h,\OO_{Z_h}) - 2$ by
Riemann -- Roch and Serre duality (see e.\,g. \cite[II.3,
II.6]{bpv}), so that $\deg \omega_{\scriptscriptstyle Z_h} \ge
-2$, a contradiction (this also proves the last assertion of
lemma).

Finally, if $(Z_v^2) = 0$, then $Z_v$ consists of (scheme) fibers
of $p_{\scriptscriptstyle Y}$ (cf. \cite[III.8, Lemma 8.2
(10)]{bpv}) and we get contradiction with $Z_v \cdot Z \le 0$ for
$Z_h$ being a horizontal component.
\end{proof}\\

\begin{proposition}
\label{theorem:not-e-or-p-1} Normalization of the curve $Z_h$ can
not be rational or elliptic.
\end{proposition}

\begin{proof}
Let $\mu_3$ be the Galois group of the covering $\psi: Y \lra W$
from {\ref{subsection:pre-1}}. Then the curves $Z$ and $Z_h$ are
not $\mu_3$\,-\,invariant, since $R_8$ \emph{is}
$\mu_3$\,-\,invariant and there is no non\,-\,trivial linear
relation between the classes of $R_l,H_i,F_h,G_k$. Hence the cycle
$\psi^*\psi(Z_h)$ splits on $Y$.

Suppose first that $Z_h \simeq \P^1$ and $\psi(Z_h)$ is also
smooth. Associate with the morphism $p_{\scriptscriptstyle Y}: Y
\lra \P^1$ a rational $\mu_3$\,-\,invariant function $f \in \C(Y)$
in the usual way. This yields the corresponding (elliptic)
morphism $p_{\scriptscriptstyle W}: W \lra \P^1$ such that
$p_{\scriptscriptstyle Y}\big\vert_{Z_h} = p_{\scriptscriptstyle
W}\big\vert_{\psi(Z_h)}$ via $Z_h \simeq \psi(Z_h)$ induced by
$\psi$. Note that $Z_h \cdot 3F = 6$ by \eqref{can-bun} and the
divisor $(p_{\scriptscriptstyle
W}\big\vert_{\psi(Z_h)})^*(p_{\scriptscriptstyle W}(F))$ is
divisible by $3$ because $\psi^*\psi(Z_h)$ splits. Hence
$p_{\scriptscriptstyle Y}\big\vert_{Z_h}$ corresponds to the
rational function \beq\label{funct-1}f\big\vert_{Z_h} = \frac{(t -
a)^3(t - b)^3}{\displaystyle\prod_{i = 1}^3(t - a_i)^2}\eeq for
some projective coordinate $t$ and $a,b \in F$, $a_i \in C$. Let
$Z'_h \subset \psi^*\psi(Z_h)$ be another irreducible component.
We have $Z'_h \cap Z_h \cap F = \{a,b\}$ because $\psi$ (totally)
ramifies on $F$. In particular, $a,b$ are fixed by $\mu_3$. We
also have \beq\label{funct-2}f\big\vert_{Z'_h} = \frac{(t' -
a)^3(t' - b)^3}{\displaystyle\prod_{i = 1}^3(t' - a_i)^2} =
f\big\vert_{Z_h}\eeq by construction for some projective
coordinate $t'$ and $a,b,a_i$ as above. Furthermore, the M\"obius
transformation $t \mapsto t'$ preserves two points $a,b$ and has
the order dividing $3$, which implies that $t' = \varepsilon t$
for $\varepsilon := \sqrt[3]{1}$. Then from \eqref{funct-1} and
\eqref{funct-2} we obtain that $a = 0$, $b = \infty$ and
$\varepsilon$ permutes $a_1,a_2,a_3$. In particular, the set of
points $a_1,a_2,a_3 \in C$ is $\mu_3$\,-\,invariant, which implies
that $a_i$ form an orbit of length $3$ (cf.
Proposition~\ref{theorem:s-is-jac-y} below). But then we get
$\psi(a_1) = \psi(a_2) = \psi(a_3) =: c$ by definition of $\psi$,
which implies that \[f\big\vert_{\psi(Z_h)} = \frac{(t - a)^3(t -
b)^3}{(t - c)^6},\] contradicting the fact that $f\big\vert_{Z_h}$
is not the $3^{\text{rd}}$ power of a rational function.

Suppose next that $Z_h \simeq \P^1$ and $\psi(Z_h)$ is singular.
Then both $Z_h$ and $Z'_h$ are normalizations of $\psi(Z_h)$.
Again from $Z_h \cdot 3F = 6$ we deduce the expressions
\eqref{funct-1}, \eqref{funct-2}, which gives $f = [3^{\text{rd}}\
\text{power}]$ exactly as above, a contradiction. Finally, if
$Z_h$ is rational and singular, then we have $\deg
\eta^*\OO_{Z_h}(3F) = Z_h \cdot 3F = 6$ for the normalization
$\eta: Z^{\text{norm}}_h \lra Z_h$ (cf. \cite[Example
1.2.3]{fulton}). Hence all the preceding arguments apply verbatim
to the normalization of $Z_h$ and $Z'_h$.

We now turn to the elliptic case. Let us treat only smooth $Z_h$
because the general case is reduced to this one via the
normalization exactly as for rational $Z_h$. Similarly as above,
we obtain \beq\label{funct-3}3a + 3b \sim 2a_1 + 2a_2 + 2a_3\eeq
on $Z_h$ for some $a,b \in F$, $a_i \in C$. Regard $Z_h$ as a
cubic in $\P^2$. Applying an automorphism one may assume $a \in
Z_h$ to be an inflection point. Let $L$ be a linear form defining
the tangent line to $Z_h$ at $a$. In addition, denote by $L_i$ a
linear form defining the tangent at $a_i$, $1 \le i \le 3$. Then
it follows from \eqref{funct-3} that $f\big\vert_{Z_h}$ coincides
with the rational function $LQ\slash(\displaystyle\prod_{i = 1}^3
L_i)$ on $Z_h$, where $(Q = 0) \subset \P^2$ is some conic
intersecting $Z_h$ at $b$ (with multiplicity $3$) and at the
points $Z_h \cap L_i \setminus \{a_i\}$, $1 \le i \le 3$. Now,
arguing exactly as in the case of rational $Z_h$, we get
$$
f\big\vert_{Z'_h} = L'Q'\slash(\displaystyle\prod_{i = 1}^3 L'_i)
= f\big\vert_{Z_h}
$$
for another irreducible component $Z'_h \subset \psi^*\psi(Z_h)$.
Here the forms $L',Q'$ and $L'_i$ are related with $L,Q$ and
$L_i$, respectively, by some projective transformation of $\P^2$
of order $3$, which also fixes $a$ and $b$. In particular, the set
$\{a_1,a_2,a_3\}$ is again a $\mu_3$\,-\,orbit of length $3$, so
that $L_1 = L_2 = L_3$ as previously (recall that
$\psi\big\vert_{Z_h}$ is a birational morphism), i.\,e. $a_1 = a_2
= a_3$, a contradiction.

Proposition~\ref{theorem:not-e-or-p-1} is completely proved.
\end{proof}\\

\begin{corollary}
\label{theorem:not-e-or-p-1-cor} The curve $Z$ is
\emph{reducible}.
\end{corollary}

\begin{proof}
Indeed, otherwise $Z = Z_h$ has $\deg \omega_{\scriptscriptstyle
Z} = 0$ by adjunction, contradicting
Proposition~\ref{theorem:not-e-or-p-1}.
\end{proof}\\

We proceed with our case\,-\,by\,-\,case analysis excluding the
possibilities for the curve $Z \sim A_i^*$.

\begin{lemma}
\label{theorem:exc-collection-12-a-1} Let $Z_h \ne Z$. Then $i \ne
2$.
\end{lemma}

\begin{proof}
Assume the contrary. One checks via
Proposition~\ref{theorem:curves-on-y} the intersections of $Z \sim
A_2^*$ with all fiber components of the morphism
$p_{\scriptscriptstyle Y}$ (cf. {\ref{subsection:pre-2}}) are
\emph{positive}, except for $Z \cdot R_2 = -1$, $Z \cdot H_1 = Z
\cdot F_1 = 0$. Note also that $Z \cdot R_8 = 2$. Let us show that
only $R_2,H_1,F_1$ can belong to $Z_v$.

Indeed, if $Z$ has $H_2$, say, as its irreducible component, so
that $Z - H_2$ is again a curve, then we get
\[(Z - H_2)
\cdot R_8 = 0 = Z_h \cdot R_8 + (Z_v - H_2) \cdot R_8.\] This
implies that either $Z_h = R_8$ and $Z_v - H_2 = R_5 + mR_2$ or
$Z_h \cdot R_8 = 0$ and $Z_v - H_2 = mR_2$ for some (positive) $m
\in \Z$. In both cases, we have \[0 = Z \cdot H_1 \ge Z_v \cdot
H_1 = 1,\] a contradiction. Similarly, replacing $H_2$ by the
curve $\Phi_2 - H_1 - H_2$ (cf. Figure~\ref{fig-3}), we get
contradiction. Exactly the same reasoning also shows that the
curves $F_2$ and $\Phi_3 - F_1 - F_2$ are not among the components
of $Z$.

Further, if $G_1 \subset Z$, then we have $(Z - G_1) \cdot R_8 =
0$, which implies that $Z = R_8 + G_1 + R_5 + mR_2$ or $Z_h + G_1
+mR_2$, similarly as above. In the former case, we obtain
\[-1 = (Z^2) = 1 + 2m - 2m^2,\]
a contradiction. On the other hand, if $Z = Z_h + G_1 +mR_2$, then
\[(Z_h^2) = (Z - G_1 - mR_2)^2 = -3 - 2Z \cdot G_1 + 2m - 2m^2 < -3\]
for $Z \cdot G_1 > 0$, which contradicts
Lemma~\ref{theorem:aux-a-3}. Same applies to the curves $G_2$ and
$\Phi_4 - G_1 - G_2$.

Now, if $R' \subseteq Z_v$ for $R' := \Phi_1 - R_2 - R_5$, then
$-3 = (Z - R') \cdot R_8 \ge -1$ as above, a contradiction.
Finally, if $R_5 \subseteq Z$, then from $(Z - R_5) \cdot R_8 = 1$
and the preceding results we deduce the following possibilities
for $Z$:
\[R_8 + R_5 + H_1 + mR_2, \qquad R_8 + R_5 + F_1 + mR_2, \qquad
Z_h + R_5 + mR_2
\]
($Z_h \cdot R_8 = 1$ in the last case). In the first two cases, we
get \[-1 = (Z^2) = 1 + 2m - 2m^2,\] a contradiction. In turn, if
$Z = Z_h + R_5 + mR_2$, then
\[(Z_h^2) = (Z - R_5 - mR_2)^2 = -3 - 2Z \cdot R_5 + 4m - 2m^2 \le -1\]
for $Z \cdot R_5 \ge 0$, which contradicts
Lemma~\ref{theorem:aux-a-3} and
Proposition~\ref{theorem:not-e-or-p-1}.

Thus we obtain \[Z = Z_h + aR_2 + bH_1 + cF_1
\] (as $1$\,-\,cycles) by construction and Lemma~\ref{theorem:aux-a-3}, where
$a,b,c$ are non\,-\,negative integers and $a > 0$. On the other
hand, we have
\[(Z_h^2) = (Z - aR_2 - bH_1 - cF_1)^2 = -1 + 2a - 2a^2 - 2b^2 - 2c^2 \le -1 + 2a - 2a^2,\]
which gives $(Z_h^2) = -1$ because $K_Y \cdot Z_h = 1$ and $\deg
\omega_{\scriptscriptstyle Z_h} \ge -2$. But then $\deg
\omega_{\scriptscriptstyle Z_h} = 0$ by definition, i.\,e.
normalization of $Z_h$ is either rational or elliptic, which
contradicts Proposition~\ref{theorem:not-e-or-p-1}.

The proof that $H^0(Y,A_2^*) = 0$ under the assumption $Z_h \ne Z$
is complete.
\end{proof}\\

It follows from Lemma~\ref{theorem:exc-collection-12-a-1} that for
$i = 2$ the curve $Z$ must be irreducible --- in contradiction
with Corollary~\ref{theorem:not-e-or-p-1-cor}. Together with
Lemma~\ref{theorem:aux-a-1} we have thus proven the vanishing of
$\Ext^{\bullet}(A_2,A_1)$.

Suppose next that $i = 3$. Then again for $Z \sim A_3^*$ one gets:
$Z \cdot R_2 = -1$, $Z \cdot R_8 = 2$, $Z \cdot H_1 = Z \cdot F_1
= 0$ and the intersections of $Z$ with all other fiber components
of $p_{\scriptscriptstyle Y}$ are positive. Now the arguments from
the case of $i = 2$ apply literally here to show that
$\Ext^{\bullet}(A_3,A_1) = 0$.

\begin{lemma}
\label{theorem:exc-collection-12-a-2} Let $Z_h \ne Z$. Then $i \ne
4$.
\end{lemma}

\begin{proof}
Assume the contrary (so that $Z \sim A_4^*$). One checks via
Proposition~\ref{theorem:curves-on-y} that $Z \cdot R_2 = Z \cdot
R_5 = Z \cdot H_1 = Z \cdot F_1 = 0$ and the intersections of $Z$
with all other fiber components of $p_{\scriptscriptstyle Y}$ are
positive. Note also that $Z \cdot R_8 = 2$. Then all
``\,positive\,'' components of $Z_v$ are excluded exactly as in
the proof of Lemma~\ref{theorem:exc-collection-12-a-1}.

Thus we obtain
\[Z = Z_h + aR_2 + bR_5 + cH_1 + dF_1\]
by construction and Lemma~\ref{theorem:aux-a-3}, where $a,b,c,d
\in \Z$ are non\,-\,negative, $a + b + c + d \ne 0$. On the other
hand, we have
\[(Z_h^2) = (Z - aR_2 - bR_5 - cH_1 - dF_1)^2 = -1 + 2ab - 2a^2 - 2b^2 - 2c^2 - 2d^2,\]
which gives $(Z_h^2) = -3$ and $Z_h \simeq \P^1$, contradicting
Proposition~\ref{theorem:not-e-or-p-1}.

Hence $H^0(Y,A_4^*) = 0$ when $Z_h \ne Z$.
\end{proof}\\

It follows from Lemma~\ref{theorem:exc-collection-12-a-2} that for
$i = 4$ the curve $Z$ must be irreducible --- in contradiction
with Corollary~\ref{theorem:not-e-or-p-1-cor}. Together with
Lemma~\ref{theorem:aux-a-1} this proves the vanishing of
$\Ext^{\bullet}(A_4,A_1)$.

\begin{lemma}
\label{theorem:more-exc-collection-12-a-1} Let $Z_h \ne Z$. Then
$i \ne 5$.
\end{lemma}

\begin{proof}
Assume the contrary (so that $Z \sim A_5^*$). One checks via
Proposition~\ref{theorem:curves-on-y} that $Z \cdot R_2 = Z \cdot
F_1 = 0$ and the intersections of $Z$ with all other fiber
components of $p_{\scriptscriptstyle Y}$ are positive. Note also
that $Z \cdot R_8 = 1$. Then all ``\,positive\,'' components of
$Z_v$ are excluded similarly as in the proof of
Lemma~\ref{theorem:exc-collection-12-a-1} (cf. the case of $i = 6$
below).

Thus we obtain
\[Z = Z_h + aR_2 + bF_1\]
by construction and Lemma~\ref{theorem:aux-a-3}, where $a,b \in
\Z$ are non\,-\,negative, $a + b \ne 0$. On the other hand, we
have
\[(Z_h^2) = (Z - aR_2 - bF_1)^2 = -1 - 2a^2 - 2b^2,\]
which gives $(Z_h^2) = -3$ and $Z_h \simeq \P^1$, contradicting
Proposition~\ref{theorem:not-e-or-p-1}.

Hence $H^0(Y,A_5^*) = 0$ when $Z_h \ne Z$.
\end{proof}\\

\begin{lemma}
\label{theorem:more-exc-collection-12-a-2} Let $Z_h \ne Z$. Then
$i \ne 6$.
\end{lemma}

\begin{proof}
Assume the contrary (so that $Z \sim A_6^*$). One checks via
Proposition~\ref{theorem:curves-on-y} that $Z \cdot R_2 = Z \cdot
F_1 = -1$ and the intersections of $Z$ with all other fiber
components of $p_{\scriptscriptstyle Y}$ are positive. Note also
that $Z \cdot R_8 = 3$. Then the curve $Z_1 := Z - F_1$ satisfies
$Z_1 \cdot R_8 = 1$. Furthermore, $Z_1$ can not contain the curve
$\Phi_1 - R_2 - R_5$, as well as the curves from the fibers
$\Phi_i$, $i > 1$ . Indeed, if $H_1 \subset Z_1$, say, then from
$(Z_1 - H_1) \cdot R_8 = -1$ we deduce that $Z_1 = R_8 + H_1 +
mR_2$ for some $m \in \Z$, which gives $Z_1 \cdot R_2 = -2m^2$
--- in contradiction with $Z \cdot R_2 = -1$.

Thus one gets only two possibilities for $Z_1$ (and $Z$): $Z_h +
mR_2$ or $Z_h + R_5 + mR_2$. In particular, we obtain $(Z_h^2) \le
-1 + 2m - 2m^2 \le -1$ for $Z \cdot R_5 \ge 0$, contradicting
Lemma~\ref{theorem:aux-a-3} and
Proposition~\ref{theorem:not-e-or-p-1}.

Hence $H^0(Y,A_6^*) = 0$ when $Z_h \ne Z$.
\end{proof}\\

Lemmas~\ref{theorem:more-exc-collection-12-a-1} and
\ref{theorem:more-exc-collection-12-a-2} lead to contradiction
with Corollary~\ref{theorem:not-e-or-p-1-cor} exactly as above

Further, if $i > 6$ is not equal $11$ (cf.
Lemma~\ref{theorem:aux-a-1}), then we have $Z \cdot R_8 = 1$, $Z
\cdot R_2 \ge -1$, $Z \cdot F_1 \ge 0$ and the intersections of
$Z$ with all other fiber components of $p_{\scriptscriptstyle Y}$
are positive. Now the arguments, similar to those in the proofs of
Lemmas~\ref{theorem:more-exc-collection-12-a-1} and
\ref{theorem:more-exc-collection-12-a-2}, lead to contradiction.
We have thus proven that $\Ext^{\bullet}(A_i,A_1) = 0$ for all $i
> 1$ not equal to $11$.

Let us finally show that $\Ext^{\bullet}(A_{11},A_1) = 0$ as well.
According to Lemma~\ref{theorem:aux-a-1} it suffices to prove that
$H^2(Y,A_{11}^*) = 0$. Assume on the contrary that $K_Y + A_{11}
\sim \ [\text{some curve} \ Z]$. One checks via
Proposition~\ref{theorem:curves-on-y} that $Z \cdot R_2 = Z \cdot
H_1 = Z \cdot F_1 = Z \cdot F' = 0$ for $F' := \Phi_3 - F_1 - F_2$
and the intersections of $Z$ with all other fiber components of
$p_{\scriptscriptstyle Y}$ are positive. Note also that $Z \cdot
R_8 = 4$. Let us represent again $Z$ as the sum of horizontal
$Z_h$ and vertical $Z_v$ curves with respect to the morphism
$p_{\scriptscriptstyle Y}$

\begin{lemma}
\label{theorem:more-exc-collection-12-a-3} The curve $Z \sim K_Y +
A_{11}$ is irreducible.
\end{lemma}

\begin{proof}
Assume the contrary. Let us show that only $R_2, H_1, F_1$ and
$F'$ can belong to $Z_v$ (cf. the proof of
Lemma~\ref{theorem:exc-collection-12-a-1}).

Indeed, if $Z$ has $H_2$, say, as its irreducible component, so
that $Z - H_2$ is again a curve, then we get
\[(Z - H_2) \cdot R_8 = 2 = Z_h \cdot R_8 + (Z_v - H_2) \cdot R_8.\]
This yields the following options for $Z_v - H_2$: (i) $R_5 + \Phi
+ mR_2$, $3R_5 + mR_2$ (when $Z_h = R_8$) or (ii) $R_5 + mR_2$,
$2R_5 + mR_2$, $\Phi + mR_2$, $mR_2$ (when $Z_h \cdot R_8 \ge 0$)
for some (positive) $m \in \Z$ and an irreducible curve $\Phi
\subset \Phi_i$, $i
> 1$. It follows from $Z \cdot H_1 = 0$ and $R_8 \cdot H_i = 2$ that only case (ii) is
possible. This implies that
\[(Z_h^2) = (Z - Z_v)^2 = -1 - 2Z \cdot Z_v + (Z_v)^2 < -3,\]
since $Z \cdot Z_v > 0$ by construction, which contradicts
Lemma~\ref{theorem:aux-a-3}.

Further, if $R' = \Phi_1 - R_2 - R_5$ is an irreducible component
of $Z$, then from $(Z - R') \cdot R_8 = -1$ we deduce that $Z =
R_8 + R' + mR_2$, which contradicts $Z \cdot R_2 = 0$. Finally, if
$R_5 \subset Z$, then from $(Z - R_5) \cdot R_8 = 3$ and the
preceding results we deduce the following possibilities for $Z$:
\[R_8 + R_5 + 2H_1 + mR_2,\qquad R_8 + R_5 + 2F_1 + mR_2, \qquad R_8 + R_5 + 2F' +
mR_2,
\]
\[R_8 + R_5 + H_1 + F_1 + mR_2,\qquad R_8 + R_5 + H_1 + F' + mR_2,\qquad R_8 + R_5 + F_1 + F' + mR_2,\]
\[R_8 + 3R_5 + H_1 + mR_2,\qquad R_8 + 3R_5 + F_1 + mR_2,\qquad R_8 + 3R_5 + F' + mR_2, \qquad R_8 + 5R_5 +
mR_2\] (when $Z_h = R_8$) or \[Z_h + R_5 + mR_2,\qquad Z_h + 2R_5
+ mR_2,\qquad Z_h + 3R_5 + mR_2,\qquad Z_h + 4R_5 + mR_2\]
\[Z_h + R_5 + H_1 + mR_2,\qquad Z_h + R_5 + F_1 + mR_2,\qquad Z_h + R_5 + F' + mR_2,\]
\[Z_h + 2R_5 + H_1 + mR_2,\qquad Z_h + 2R_5 + F_1 +
mR_2,\qquad Z_h + 2R_5 + F' + mR_2\] (when $Z_h \cdot R_8 \ge 0$).
In all cases, we either get contradiction with $Z \cdot R_2 = Z
\cdot H_1 = Z \cdot F_1 = Z \cdot F' = 0$ or obtain $(Z_h^2) <
-3$, contradicting Lemma~\ref{theorem:aux-a-3}.

Hence we have \[Z = Z_h + aR_2 + bH_1 + cF_1 + dF',
\]
where $a,b,c,d \in \Z$ are non\,-\,negative, $a + b + c + d \ne
0$. Now exactly as in the proof of
Lemma~\ref{theorem:exc-collection-12-a-2} one gets contradiction
with Proposition~\ref{theorem:not-e-or-p-1}.

The proof that $H^2(Y,A_{11}^*) = 0$ under the assumption $Z_h \ne
Z$ is complete.
\end{proof}\\

It follows from Lemma~\ref{theorem:more-exc-collection-12-a-3}
that $Z = Z_h$. But then from $(Z^2) = -1$ and $\deg
\omega_{\scriptscriptstyle Z} = 0$ we get contradiction exactly as
in the proof of Proposition~\ref{theorem:not-e-or-p-1}. This
concludes the proof that $\Ext^{\bullet}(A_{11},A_1) = 0$.

We now turn to the case of $j > 1$.

\begin{lemma}
\label{theorem:exc-collection-12-a-3}
$\Ext^{\bullet}(A_{12},A_{11}) = \Ext^{\bullet}(A_{11},A_j) = 0$
for all $1 < j < 11$ not equal $8$.
\end{lemma}

\begin{proof}
Let us consider the case of $\Ext^{\bullet}(A_{12},A_{11})$ first.
Again, since
\begin{equation}
\nonumber \chi(A_{12},A_{11}) := \chi(Y,A_{12}^* \otimes A_{11}) =
0
\end{equation}
via Proposition~\ref{theorem:curves-on-y}, it suffices to prove
that $H^0(Y,A_{12}^* \otimes A_{11}) = H^2(Y,A_{12}^* \otimes
A_{11}) = 0$. Moreover, since $(K_Y + A_{12} - A_{11}) \cdot K_Y =
-2$, we immediately get $H^2 = 0$. Thus it remains to show that
$H^0 = 0$.

Assume on the contrary that $A_{11} - A_{12} \sim \ [\text{some
curve} \ Z]$. We have $(Z^2) = 0$ and $Z \cdot K_Y = 2$. Let us
represent $Z$ as the sum of horizontal $Z_h$ and vertical $Z_v$
curves with respect to the morphism $p_{\scriptscriptstyle Y}$
(cf. the above case of $j = 1$).

Note that $Z \cdot R_2 = -1$, i.\,e. $R_2 \subseteq Z_v$, and one
computes the curve $Z - R_2$ intersects any possible irreducible
component of $Z_v$ \emph{non\,-\,negatively}. In particular, if
$Z_v \ne R_2$, then we obtain $(Z_h^2) \le -2$ (cf. the proof of
Lemma~\ref{theorem:aux-a-3}). This together with $Z_h \cdot K_Y =
2$ shows that $\deg \omega_{\scriptscriptstyle Z_h} \le 0$. Hence
the arithmetic genus of (the underlying reduced scheme of) $Z_h$
is either $0$ or $1$. Now the result (and the proof) of
Proposition~\ref{theorem:not-e-or-p-1} apply literally here to
provide contradiction.

Thus we get $Z = Z_h + R_2$. Note that $(Z_h^2) = (Z^2) = 0$ in
this case. Then for any irreducible component $\widetilde{Z}_h
\subsetneq Z_h$ we have $(\widetilde{Z}_h^2) < 0$ and
$\widetilde{Z}_h \cdot K_Y = 1$. In particular, we obtain that
$\deg \omega_{\scriptscriptstyle \widetilde{Z}_h} \le 0$, which
gives contradiction via Proposition~\ref{theorem:not-e-or-p-1} as
above. Hence the curve $Z_h$ is \emph{irreducible}. Then the
linear system $|Z_h|$ either consists of only $Z_h$ or else it is
basepoint\,-\,free. But in the latter case, since $c_2(X) = 12$
and $\chi_{\text{top}}(Z_h) = -2$ (for generic $Z_h$), the
morphism given by $|Z_h|$ will have a  \emph{singular} fiber of
arithmetic genus $2$, which once again contradicts
Proposition~\ref{theorem:not-e-or-p-1}. Thus we get
$H^0(Y,\OO_Y(Z_h)) = H^0(Y,\OO_Y(Z)) = \C$. This yields a
\emph{unique non\,-\,split} rank $2$ vector bundle $\E$ on Y
corresponding to a non\,-\,trivial class in $\text{Ext}^1(\OO_Y,
\OO_Y(Z)) \simeq \C$.

Choose an affine open subset $U \subset \P^1$ and refer to the
surface $p_{\scriptscriptstyle Y}^{-1}(U)$ as an elliptic curve
$E$ over the function field $\C(U)$. Then $\OO_Y$ is identified
with the structure sheaf of $E$ and $\OO_Y(Z)$ corresponds to some
$0$\,-\,cycle $\xi$ on $E$ (of length $12$). In turn, the
restriction $\E\big\vert_{p_{\scriptscriptstyle Y}^{-1}(U)}$ can
be regarded as an extension of $\OO_E$ by $\OO_E(\xi)$, which is
thus trivial due to $\text{Ext}^1(\OO_E, \OO_E(\xi)) =
H^1(E,\OO_E(\xi)) = 0$.  On the other hand, the
$\text{Gal}(\overline{\C(U)} \slash \C(U))$\,-\,action on
$E(\overline{\C(U)})$,\footnote{~For any field {\bf k}, we denote
by $\text{Gal}(\bar{{\bf k}} \slash {\bf k})$ the Galois group of
an algebraic closure $\bar{{\bf k}} \supseteq {\bf k}$, whereas
$E({\bf k})$ denotes the set of ${\bf k}$\,-\,points on the curve
$E$.} with varying $U$, generates the action of the global
monodromy of $p_{\scriptscriptstyle Y}$ (cf.
{\ref{subsection:pha-1}} below). In particular, since the bundle
$\E$ is unique, it is invariant under the
monodromy.\footnote{~Indeed, any element $\tau \in \text{Gal}$
acts on the group $\text{Ext}^1(\OO_Y, \OO_Y(Z))$ in a natural
way, so that $\tau^*\E = \E$.} Then various trivializations
$\E\big\vert_{p_{\scriptscriptstyle Y}^{-1}(U)} \simeq \OO_E
\oplus \OO_E(\xi)$ agree for different choices of $U$, hence they
extend over the whole base $\P^1$ of $p_{\scriptscriptstyle Y}$ to
show that $E$ actually \emph{splits}, a contradiction. Thus we get
$H^0(Y,A_{12}^* \otimes A_{11}) = 0$.

Further, turning to the case of $\Ext^{\bullet}(A_{11},A_j)$,
where $1 < j < 11$ is not equal $8$, we have
\[\chi(A_{11},A_j) := \chi(Y,A_{11}^* \otimes A_{j}) = 0\] via
Proposition~\ref{theorem:curves-on-y}. Then $H^0(Y,A_{11}^*
\otimes A_{j}) = 0$ because of $(A_j - A_{11}) \cdot K_Y = -2$.
Now, assuming that $H^2(Y,A_{11}^* \otimes A_{j}) \ne 0$ for some
$j$, let us fix a curve $Z \sim K_Y + A_{11} - A_j$. Note that
again $(Z^2) = 0$ and $Z \cdot K_Y = 2$.

Consider $j \in \{2,3,7,10\}$. One computes $Z \cdot R_2 = -1$ for
these $j$, i.\,e. $R_2 \subset Z$, and the curve $Z - R_2$
intersects any fiber component of $Z$ non\,-\,negatively. Then
running the preceding arguments verbatim we obtain contradiction.
Similarly, if $j = 6$, then we have $Z \cdot R_2 = Z \cdot F_1 =
-1$ and the curve $Z - R_2 - F_1$ intersects any fiber component
of $Z$ non\,-\,negatively, whence a contradiction as earlier.
Finally, if $j \in \{4,5,9\}$, then already $Z$ intersects all of
its fiber components non\,-\,negatively, so that $Z$ is horizontal
and irreducible as above, which is again impossible.

The proof of lemma is complete.
\end{proof}\\

\begin{lemma}
\label{theorem:exc-collection-12-a-4} $\Ext^{\bullet}(A_i,A_8) =
\Ext^{\bullet}(A_8,A_j) = 0$ for all $i > 8$ and $1 < j < 8$.
\end{lemma}

\begin{proof}
Similarly as above, since $\chi(A_i,A_8) = \chi(A_8,A_j) = 0$ for
all $i > 8$ and $j < 8$, it suffices to prove that
$H^0(Y,A_i^*\otimes A_8) = H^2(Y,A_i^* \otimes A_8) = H^2(Y,A_j
\otimes A_8^*) = 0$. Note that $H^0(Y,A_j \otimes A_8^*) = 0$
because $(A_j - A_8) \cdot K_Y = -1$. We also have $H^2(Y,A_i^*
\otimes A_8) = 0$, $i \ne 11$, and $H^0(Y,A_{11}^*\otimes A_8) =
0$ be the same reason.

Suppose first that $i \ne 11$ and there exists a curve $Z \sim A_8
- A_i$. One computes $(Z^2) = -1$, $Z \cdot K_Y = 1$, $Z \cdot R_8
= 3$, $Z \cdot R_2 \ge -1$ and $Z$ intersects all other fiber
components of the morphism $p_{\scriptscriptstyle Y}$
non\,-\,negatively. Now similarly as in the proofs of
Lemmas~\ref{theorem:more-exc-collection-12-a-2},
\ref{theorem:more-exc-collection-12-a-3}, we first exclude those
irreducible fiber components that intersect $Z$ positively, and
then derive contradiction with Lemma~\ref{theorem:aux-a-3} and
Proposition~\ref{theorem:not-e-or-p-1}.

In turn, equality $H^2(Y,A_{11}^* \otimes A_8) = 0$ follows as
previously, by observing that $(K_Y + A_{11} - A_8) \cdot R_8 = 2$
and $K_Y + A_{11} - A_8$ intersects all the fiber components of
$p_{\scriptscriptstyle Y}$ non\,-\,negatively.

Now assume that $K_Y + A_8 - A_j \sim [\text{some curve} \ Z]$.
Again we have $(Z^2) = -1$ and $Z \cdot K_Y = 1$. Firstly, if $j
\le 4$, then $Z \cdot R_8 = 5$. At the same time, we have $Z \cdot
H_1 = -1$, which implies that $Z_1 := Z - H_1$ is a curve. Again
$(Z_1^2) = (Z^2) = -1$, $Z_1 \cdot R_8 = 3$, $Z_1 \cdot R_2 = -1$
and $Z_1$ intersects all other fiber components of
$p_{\scriptscriptstyle Y}$ non\,-\,negatively. Similarly as in the
proof of Lemmas~\ref{theorem:more-exc-collection-12-a-2}, we first
exclude those irreducible fiber components that intersect $Z$
positively, and then derive contradiction with
Lemma~\ref{theorem:aux-a-3} and
Proposition~\ref{theorem:not-e-or-p-1}.

Secondly, if $j = 6$, then $Z \cdot R_8 = 6$. At the same time, we
have $Z \cdot R_2 = Z \cdot F_1 = -1$, which implies that $Z_1 :=
Z - R_2 - F_1 $ is a curve. Note that $(Z_1^2) = (Z^2) = -1$, $Z_1
\cdot R_8 = 4$ and $Z_1$ intersects all other fiber components of
$p_{\scriptscriptstyle Y}$ non\,-\,negatively. Similarly as in the
proof of Lemma~\ref{theorem:more-exc-collection-12-a-3}, we first
exclude those irreducible fiber components that intersect $Z$
positively, and then derive contradiction with
Lemma~\ref{theorem:aux-a-3} and
Proposition~\ref{theorem:not-e-or-p-1}.

Finally, if $j = 5$ or $7$, then $Z \cdot R_8 = 4$, $Z \cdot R_2
\ge -1$ and $Z$, $Z - R_2$ intersect all other fiber components of
$p_{\scriptscriptstyle Y}$ non\,-\,negatively. Again the preceding
considerations give contradiction.

The proof of lemma is complete.
\end{proof}\\

For all other $i > j > 1$, not considered previously (cf.
Lemmas~\ref{theorem:exc-collection-12-a-3} and
\ref{theorem:exc-collection-12-a-4}), we have $K_Y \cdot (A_j -
A_i) = 0$ and again $\chi(A_i, A_j) = 0$. In particular, once
$H^0(Y,A_i^*\otimes A_j) \ne 0$ (resp. $H^2(Y,A_i^*\otimes A_j)
\ne 0$), there exists a curve $Z \sim A_j - A_i$ (resp. $Z \sim
K_Y + A_i - A_j$) contained in the fibers of
$p_{\scriptscriptstyle Y}$. In other words, we have $Z = Z_v$ in
the previous notation, which implies that $A_j - A_i$ (resp. $K_Y
+ A_i - A_j$) is an \emph{integral} combination of $K_Y$ and the
classes of $R_l,H_i,F_h,G_k$. This situation is only possible for
the following values of $(i,j)$:
\[(10,2),\qquad (4,3),\qquad (9,5),\qquad (7,6),\qquad (12,6),\qquad (12,7).\]
The corresponding divisors $A_j - A_i$ are
\[-3K_Y + H_2,\qquad -K_Y + R_5,\qquad 2K_Y - G_2,\qquad -F_1,\qquad -4K_Y + F_2,\qquad -4K_Y + F_1 + F_2.\]
It is now immediate that $H^0(Y,A_i^*\otimes A_j) =
H^2(Y,A_i^*\otimes A_j) = 0$. For example, if $(i,j) = (10,2)$ and
$-3K_Y + H_2 \sim \ [\text{some curve} \ Z]$, then $Z - H_2 \sim
-3K_Y$ is also a curve for $Z \cdot H_2 = -2$, a contradiction. In
turn, if $H^0(Y,4K_Y - H_2) \ne 0$, then from
{\ref{subsection:pre-2}} we deduce that $2F$ contains $H_2$, which
is also impossible. The remaining cases of $(i,j)$ are treated
similarly and employ in addition the fact that $C + F \sim 5K_Y$
does not contain rational curves.

Theorem~\ref{theorem:exc-collection-12} is completely proved.
\end{proof}\\

\bigskip

\section{A phantom}
\label{section:pha}

\refstepcounter{equation}
\subsection{Description of $\jac\,Y$}
\label{subsection:pha-0-1}

We retain the notation of Section~\ref{section:pre}.

Recall that the \emph{Hesse pencil} is a one\,-\,dimensional
linear system of cubics on $\P^2$ of the form \beq\label{hesse}
t_0(x^3 + y^3 + z^3) + t_1xyz = 0.\eeq Here $x,y,z$ are projective
coordinates on $\P^2$ and $[t_0:t_1]\in\P^1$ parameterizes the
members of the pencil (we refer to \cite{art-dolg} for an overview
of the many properties of \eqref{hesse}).

Blowing up the base points one resolves the indeterminacies of the
rational map $\P^2\dashrightarrow\P^1$ given by the Hesse pencil.
This yields a minimal elliptic surface $p_{\scriptscriptstyle S}:
S \lra \P^1$ whose Mordell -- Weil group is generated by $9$
sections of $p_{\scriptscriptstyle S}$ and is isomorphic to
$(\Z\slash 3\Z)^2$.

\begin{proposition}
\label{theorem:s-is-jac-y} We have $S = \jac\,Y$. In particular,
there is a natural inclusion $\mu_3 \subset (\Z\slash 3\Z)^2
\subset \Aut\,S$, where $\mu_3$ is the Galois group of the
covering $\psi$ (see the end of {\ref{subsection:pre-1}})
\end{proposition}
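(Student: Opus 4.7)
The plan is to recognize $\jac Y$ as the unique extremal rational elliptic surface with singular fiber configuration $4I_3$, which coincides with $S$, and then to transfer the $\mu_3$-action on $Y$ to a translation action on $S$.

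First, I would verify that $\jac Y$ is a rational elliptic surface with exactly four $I_3$-fibers. By {\ref{subsection:pre-2}}, the only singular fibers of $p_{\scriptscriptstyle Y}$ are the four $I_3$-fibers $\Phi_1,\ldots,\Phi_4$, while the multiple fibers $2C$ and $3F$ are supported on smooth elliptic curves. Passing to the Jacobian (cf.\ \cite[V.13]{bpv}) replaces the multiple fibers by smooth ones but preserves the remaining singular fibers, so $\jac Y\to\P^1$ is relatively minimal, admits a section, and has exactly four singular fibers of type $I_3$. Then $c_2(\jac Y)=c_2(Y)=12$, so $\chi(\OO_{\jac Y})=1$; combined with $q(\jac Y)=0$ and $K_{\jac Y}\sim -F$ anti-effective for a general fiber $F$, Castelnuovo's criterion forces $\jac Y$ to be rational.

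Next, I would identify $\jac Y$ with $S$. The Shioda-Tate formula forces $\rank MW(\jac Y)=10-2-4(3-1)=0$, while the torsion part is pinned down by the $4I_3$-configuration to be $(\Z/3\Z)^2$. By the Miranda-Persson classification of extremal rational elliptic surfaces, the one with fiber configuration $4I_3$ and Mordell-Weil group $(\Z/3\Z)^2$ is unique up to isomorphism; since $S$ manifestly realizes these invariants (the nine sections being the exceptional divisors over the nine Hesse base points), we conclude $\jac Y\simeq S$.

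Finally, for the inclusion $\mu_3\subset(\Z/3\Z)^2\subset\Aut S$: the Galois action of $\mu_3$ on $Y$ preserves $p_{\scriptscriptstyle Y}$ because the latter factors through $\psi:Y\to W$, and it acts trivially on the base $\P^1$. The cover $\psi$ is generically \'etale, so $\mu_3$ acts freely on a generic fiber of $p_{\scriptscriptstyle Y}$; the only fixed-point-free automorphisms of a smooth elliptic curve are translations, so this action defines an injection of $\mu_3$ into $MW(\jac Y)=(\Z/3\Z)^2$, all of whose elements act on $S$ by translation. The main obstacle is the uniqueness step (the invocation of Miranda-Persson); an alternative route is to match the functional invariants $J\colon\P^1\to\P^1$ of $\jac Y$ and $S$ directly, or to produce a birational correspondence via the nine infinitesimal points of the Halphen construction matching the nine flexes of the Hesse pencil.
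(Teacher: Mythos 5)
Your proof is correct and follows essentially the same route as the paper: one first observes that, since the logarithmic transformations are isomorphisms away from the multiple fibers (which have smooth reduction), $\jac Y$ is a relatively minimal Jacobian elliptic surface whose only singular fibers are the four $I_3$'s, and then one appeals to a uniqueness theorem to conclude $\jac Y \simeq S$. The only differences are in the references: the paper invokes Beauville's classification of elliptic fibrations over $\P^1$ with four singular fibers rather than Miranda--Persson (so your rationality and Shioda--Tate computations are extra but harmless), and it disposes of the $\mu_3$-inclusion by citing Artebani--Dolgachev, for which your fixed-point-freeness/translation argument is a sound self-contained substitute consistent with Remark~\ref{remark:jac-isog-6}.
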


\begin{proof}
Recall that fibration $p_{\scriptscriptstyle S}$ has exactly $4$
fibers $\Phi_i$ of type $I_3$ and no other singular fibers (the
images of $\Phi_i$ on $\P^2$ form the \emph{Hesse configuration}
of $12$ lines passing through $9$ inflection points common to all
smooth curves \eqref{hesse}).

In turn, the surface $Y$ is obtained from $\jac\,Y$ via the
logarithmic transformations along the fibers corresponding to $C$
and $F$, respectively. This implies that the only singular fibers
of the elliptic fibration $\jac\,Y \lra \P^1$ are $4$ curves of
type $I_3$ (cf. Figure~\ref{fig-3} and the beginning of
{\ref{subsection:pre-2}}). Now the equality $S = \jac\,Y$ follows
from \cite{bea-st-ell-fib}. The last assertion of lemma is due to
the fact that $\mu_3$ preserves the fibers of
$p_{\scriptscriptstyle Y}$ (by construction) and acts naturally on
$\jac\,Y$ (cf. \cite[Section 4]{art-dolg}).
\end{proof}\\

\begin{notation}
All elliptic surfaces we consider in the text may be identified in
a natural way with genus $1$ curves over the field $\C(\P^1)$.
Then for notation's simplicity we will not distinguish in what
follows between any such surface (over $\C$) and the corresponding
curve (over $\C(\P^1)$). Similarly, all non\,-\,fiber curves
(a.\,k.\,a. \emph{multi\,-\,sections}) on a given elliptic surface
are identified with $\overline{\C(\P^1)}$\,-\,points on the
corresponding elliptic curve, and vice versa.
\end{notation}

\begin{remark}
\label{remark:jac-isog-6} Let $R \subset Y$ be a multi\,-\,section
of some degree $n$ (i.\,e. the induced morphism
$p_{\scriptscriptstyle Y}\big\vert_R$ has degree $n$). Then the
genus $1$ curve $Y\slash{\bf k}$ acquires a point over a degree
$n$ extension ${\bf k} \supset \C(\P^1) =: {\bf k}_0$. This yields
a degree $n$ isogeny $Y \lra \jac\,Y$ of ${\bf k}_0$\,-\,curves.
Indeed, by definition we have an isomorphism $\theta: \jac^n\,Y
\stackrel{\sim}{\to} \jac\,Y$, as well as the Poincar\'e line
bundle $\Theta$ on $Y \times \jac^n\,Y$ (all over ${\bf k}_0$),
where $\jac^n\,Y$ is the moduli space of degree $n$ line bundles
on $Y$. Now the morphism $Y \lra \jac\,Y$, given by $y \mapsto
y^*\Theta\big\vert_{Y \times 0}$ (shifts by all $y \in \bar{{\bf
k}}_0$) and the subsequent composition with $\theta$, is
$\text{Gal}(\bar{{\bf k}}_0\slash{\bf k}_0)$\,-\,equivariant and
maps $R \in Y({\bf k})$ to $0 \in \jac\,Y$. This is exactly the
claimed ${\bf k}_0$\,-\,isogeny $Y \lra \jac\,Y$.\footnote{~More
precisely, as follows from the construction the graph of $Y \lra
\jac\,Y$ is invariant under the action of $\text{Gal}({\bf
k}\slash{\bf k}_0)$, hence also under the action of
$\text{Gal}(\bar{{\bf k}}_0\slash{\bf k}_0)$, which implies that
it is defined over ${\bf k}_0$.} In particular, letting $R := R_8$
(see Proposition~\ref{theorem:curves-on-y}) we get $n = 6$, and it
follows from Proposition~\ref{theorem:s-is-jac-y} that the group
$\mu_3\subseteq\Aut\,Y$ acts on $Y$ via fiberwise shifts by some
$c\in\jac^6\,Y$.
\end{remark}

\refstepcounter{equation}
\subsection{The monodromy of $Y$}
\label{subsection:pha-1}

Put $B := p_{\scriptscriptstyle
Y}(Y\setminus\left\{C,F,\Phi_1,\ldots,\Phi_4\right\})$ (notation
is as earlier). Then all fibers of the elliptic fibration
$p_{\scriptscriptstyle Y}: Y \lra \P^1$ restricted to $B$ are
smooth. This yields a homomorphism $\pi_1(B)\to\slg_2(\Z)$,
defined up to the conjugation, and its image $\Gamma_Y$ is called
the \emph{(global) monodromy group} (of $p_{\scriptscriptstyle Y}$
or $Y$). Note that $\Gamma_Y$ is generated by $5$ elements
$g_F,g_1,\ldots,g_4$ (for $\pi_1(\P^1\setminus
\{p_{\scriptscriptstyle Y}(C)\}) = \{1\}$), which satisfy the
relation
\[g_F\cdot g_1\cdot\ldots\cdot g_4 = 1,\] where $g_F \in \slg_2(\Z)$ is the image
of a loop on $B$ around $p_{\scriptscriptstyle Y}(F)$, etc. (see
e.\,g. \cite[Lemma 2.1]{bo-yu}).

Let $\mu_3$ be as in Proposition~\ref{theorem:s-is-jac-y}. Choose
some generator $\sigma\in\mu_3$. It follows from the construction
of $Y$ (via a ``\,cubic root of $F$\,'') that $\sigma\in\Gamma_Y$.
Recall also that the monodromy of $S = \jac\,Y$ is the
\emph{modular group}
\[\Gamma(3) :=
\text{Ker}\big[\slg_2(\Z)\twoheadrightarrow\slg_2(\F_3)\big].\] By
construction there is a natural (outer) action of $\sigma$ on
$\Gamma(3)$. This allows one to identify $\Gamma_Y$ with an
extension of $\mu_3 \ni \sigma$ by $\Gamma(3)$ (cf.
{\ref{subsection:log-trans-1}} below). We will see in the next
subsection that $\sigma$ ``\,detects\,'' phantoms in $D^b(Y)$.

\refstepcounter{equation}
\subsection{A phantom in $D^b(Y)$}
\label{subsection:pha-2}

Consider the length $12$ exceptional collection ${\bf EC} :=
\left\{A_1,\ldots,A_{12}\right\}$ from
Theorem~\ref{theorem:exc-collection-12}. We will assume that it is
full and bring this to contradiction.

Let $F_{\zeta}$ be the general fiber of $p_{\scriptscriptstyle Y}$
treated as a genus $1$ curve over the field $\C(\P^1)$.

\begin{lemma}
\label{theorem:ec-d-f-zeta} ${\bf EC}$ generates the category
$D^b(F_{\zeta})$. In other words, the sheaves $\OO_{F_{\zeta}}$,
$\OO_{F_{\zeta}}(\pm R_8)$ generate $D^b(F_{\zeta})$.
\end{lemma}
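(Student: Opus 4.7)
The plan is to reduce the two claims of the lemma to a single package: $(i)$ identify each restriction $A_i\big\vert_{F_\zeta}$ with one of the three line bundles $\OO_{F_\zeta}$, $\OO_{F_\zeta}(\pm R_8)$, and $(ii)$ show that the restriction functor $\LL i^*\colon D^b(Y)\to D^b(F_\zeta)$ is essentially surjective. Granting both, the exact triangulated functor $\LL i^*$ carries the generating set $\{A_1,\ldots,A_{12}\}$ of $D^b(Y)$ to a generating set of its essential image, which by $(ii)$ is all of $D^b(F_\zeta)$.

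For $(i)$, I would inspect the formulas of Theorem~\ref{theorem:exc-collection-12} and isolate the coefficient $n_i$ of $R_8$ in each $A_i$; a direct reading gives $n_i\in\{-1,0,1\}$, with the three values realized by $A_2$, $A_1$, and $A_{11}$ respectively. The residual part $A_i - n_iR_8$ is a $\Q$-combination of $K_Y$ and of the nine fiber-component classes $R_2,R_5,H_1,H_2,F_1,F_2,G_1,G_2$. By \eqref{can-bun}, $K_Y = 2F - C$ is supported on the two multiple fibers, while the remaining nine classes are supported on components of the four $I_3$-fibers $\Phi_1,\ldots,\Phi_4$; all of these effective cycles are disjoint from a general fiber $F_\zeta$, so the residue restricts trivially and $A_i\big\vert_{F_\zeta}\iso\OO_{F_\zeta}(n_iR_8\big\vert_{F_\zeta})$.

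For $(ii)$, given a coherent sheaf $\E$ on $F_\zeta = Y\times_{\P^1}\Spec\C(\P^1)$, I would spread it out to a coherent sheaf $\tilde\E$ on $Y_U := p_{\scriptscriptstyle Y}^{-1}(U)$ for a sufficiently small nonempty open $U\subset\P^1$; then, along the open immersion $j\colon Y_U\hookrightarrow Y$, the standard coherent-extension lemma on the Noetherian scheme $Y$ produces a coherent subsheaf of $j_*\tilde\E$ restricting to $\tilde\E$ on $Y_U$, and in particular to $\E$ on $F_\zeta$. Iterating over the cohomology sheaves of a bounded complex upgrades this to essential surjectivity at the derived level.

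The main --- and essentially only --- non-bookkeeping step is $(ii)$: the essential surjectivity of the restriction functor to the generic fiber. Part $(i)$ is a direct check from Proposition~\ref{theorem:curves-on-y} and the explicit formulas of Theorem~\ref{theorem:exc-collection-12}, and the generation statement itself then follows formally from fullness of ${\bf EC}$.
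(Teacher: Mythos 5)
The genuine gap is in your step (i). Restriction to the generic fiber is a group homomorphism $\Pic Y\to\Pic F_{\zeta}$ whose kernel is exactly the subgroup $V\subset\Pic Y$ generated by the honest vertical divisor classes; here $V=\langle K_Y,R_2,R_5,H_1,H_2,F_1,F_2,G_1,G_2\rangle$, because $C\sim 3K_Y$, $F\sim 2K_Y$ (from $2C\sim 3F$ and \eqref{can-bun}) and the third components of the $I_3$ fibers are $6K_Y$ minus the listed ones. This homomorphism does not extend to $\Pic Y\otimes\Q$ in the way your argument uses: knowing that the residue $A_i-n_iR_8$ lies in $V\otimes\Q$ only shows that its restriction is a \emph{torsion} element of $\Pic^0(F_{\zeta})$, and that torsion is nonzero in the present geometry, since $\Pic^0(F_{\zeta})\cong\jac(F_{\zeta})(\C(\P^1))$ is the Mordell--Weil group $(\Z/3\Z)^2$ of $S=\jac Y$ (Proposition~\ref{theorem:s-is-jac-y}); equivalently, $\Pic Y/V$ has $3$-torsion $(\Z/3\Z)^2$, generated by the fractional classes of Lemma~\ref{theorem:3-torsion-classes}. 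Since the nine curves above are linearly independent (Table A) and hence a $\Z$-basis of $V$, an integral class restricts trivially to $F_{\zeta}$ if and only if its coordinates in this basis are \emph{integers}. For the residues of $A_2,\dots,A_{10},A_{12}$ (e.g.\ for $A_8$ itself, whose coefficients are thirds) they are not, so your reasoning yields only $A_i\vert_{F_{\zeta}}\cong\OO_{F_{\zeta}}(n_iR_8\vert_{F_{\zeta}})\otimes T_i$ with $T_i$ a $3$-torsion line bundle, and for those indices $T_i$ is in fact nontrivial because the residue does not lie in $V$; only $A_1$ and $A_{11}$ have residues that are integral combinations of vertical divisors. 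So the ``in other words'' identification cannot be obtained from fiber-support considerations alone: one must either check integrally which classes land in $V$ (equivalently, compute the images in the Mordell--Weil $3$-torsion) or carry the torsion twists through the statement and its later use. Note that the paper's own proof asserts the identification of the restrictions without argument in its last sentence, so it does not supply this step either --- but it is precisely the step your write-up claims to prove.

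Your step (ii), by contrast, is essentially correct and takes a genuinely different route from the paper. The paper never considers the restriction functor: it argues intrinsically on $F_{\zeta}$ that line bundles generate $D^b(F_{\zeta})$ (via Atiyah's classification, identifying vector bundles on the genus-one curve with $0$-cycles, which are cones of maps of line bundles) and then transports complexes of line bundles to $D^b(Y)$ by an ad hoc lifting functor before invoking fullness. Your route --- spread a coherent sheaf on $F_{\zeta}$ out to $p_{\scriptscriptstyle Y}^{-1}(U)$, extend coherently to $Y$, and use flatness of $F_{\zeta}\to Y$ so that restriction is exact --- pushes all the work onto the fullness hypothesis and avoids Atiyah altogether, which is cleaner and more robust. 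One small repair: ``iterating over cohomology sheaves'' does not literally give essential surjectivity of $D^b(Y)\to D^b(F_{\zeta})$, but you do not need it; every object of $D^b(F_{\zeta})$ is a finite iterated extension of its shifted cohomology sheaves, each of which is the restriction of an object of $D^b(Y)=\langle A_1,\ldots,A_{12}\rangle$, hence lies in the triangulated subcategory generated by the $A_i\vert_{F_{\zeta}}$, which is all the lemma requires.
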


\begin{proof}
Given any element $A\in\Pic F_{\zeta}$, we regard it as a formal
difference of two multi\,-\,sections of $p_{\scriptscriptstyle
Y}$, which yields an element $A_Y \in \Pic Y$ naturally associated
with $A$. In this way, to every complex of direct sums of line
bundles on $F_{\zeta}$ one associates a complex of direct sums of
line bundles on $Y$, since $\Hom\,(A_1,A_2) :=
H^0(F_{\zeta},A_1^*\otimes A_2)$ for any $A_i \in \Pic F_{\zeta}$.
Moreover, for each \emph{exact} such complex on $F_{\zeta}$ one
gets an exact complex on $Y$, which defines a functor (cf.
{\ref{subsection:pre-3}})
\[\Phi: \mathcal{D} := \big[\text{full triangulated subcategory of}\ D^b(F_{\zeta})\ \text{generated by various}\ A\in\Pic F_{\zeta}\big] \lra D^b(Y).\]

It is immediate from the construction that two categories
$\mathcal{D}$ and $\Phi(\mathcal{D})$ are equivalent. Further, all
vector bundles on $F_{\zeta}$ can be naturally identified via
$\Theta$ from Remark~\ref{remark:jac-isog-6} with
$\text{Gal}(\bar{{\bf k}}_0\slash{\bf k}_0)$\,-\,invariant
$0$\,-\,cycles (a.\,k.\,a. sky\,-\,scraper sheaves) on $\jac\,Y$,
which in turn are identified with $0$\,-\,cycles on
$F_{\zeta}$.\footnote{~This follows basically from \cite[Theorem
7]{atiyah}, where it is proved that every indecomposable vector
bundle on an elliptic curve is uniquely determined by its rank and
determinant, considered as an element in $\Pic$.} Note also that
given any vector bundle $V$ on $F_{\zeta}$, with the associated
$0$\,-\,cycle $\xi_V$, one has a canonical identification
$\Hom\,(V,\star) = \Hom\,(\xi_V,\star)$.

Thus all vector bundles on $F_{\zeta}$ belong to $\mathcal{D}$ and
we get $\Phi(\mathcal{D}) = \mathcal{D} = D^b(F_{\zeta}) \subset
D^b(Y)$. The last statement of lemma follows from the fact that
$D^b(Y) = \left<A_1,\ldots,A_{12}\right>$ by assumption and that
$A_i\big\vert_{F_{\zeta}}$ is either $\OO_{F_{\zeta}}$ or
$\OO_{F_{\zeta}}(\pm R_8)$ for any $1 \le i \le 12$.
\end{proof}\\

The automorphism $\sigma$ (see {\ref{subsection:pha-1}}) induces
an autoequivalence on $D^b(Y)$. Then it follows from
Lemma~\ref{theorem:ec-d-f-zeta} that
$\sigma^*\OO_{F_{\zeta}},\sigma^*\OO_{F_{\zeta}}(\pm R_8)$ also
generate $D^b(F_{\zeta})$. Moreover, since $\OO_{\varepsilon}(\pm
R_8) = \OO_{\varepsilon}$ for any effective $0$\,-\,cycle
$\varepsilon$ on $F_{\zeta}$, one may assume that
$\sigma^*\OO_{F_{\zeta}},\sigma^*\OO_{F_{\zeta}}(R_8)$ generate
$D^b(F_{\zeta})$ for $\OO_{\varepsilon} =
\text{Cone}(\OO_{F_{\zeta}} \to \OO_{F_{\zeta}}(R_8))$ and
$\OO_{F_{\zeta}}(-R_8) = \OO_{\varepsilon}[1]$ (here $\varepsilon$
corresponds to the multi\,-\,section $R_8$).

\begin{lemma}
\label{theorem:sigma-on-ec} There exists a semiorthogonal
decomposition $D^b(F_{\zeta}) =
\left<\sigma^*\OO_{F_{\zeta}},\sigma^*\OO_{F_{\zeta}}(R_8)\right>$.
\end{lemma}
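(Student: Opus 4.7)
The generation half follows from the paragraph just preceding the lemma: Lemma~\ref{theorem:ec-d-f-zeta} together with the cone that produces $\OO_{F_\zeta}(-R_8)$ from $\OO_{F_\zeta}$ and $\OO_{F_\zeta}(R_8)$ ensures that $\sigma^\ast\OO_{F_\zeta}$ and $\sigma^\ast\OO_{F_\zeta}(R_8)$ triangulatedly generate $D^b(F_\zeta)$. The real content of the lemma is therefore the semi-orthogonality
\[
\Ext^\bullet(\sigma^\ast\OO_{F_\zeta}(R_8),\,\sigma^\ast\OO_{F_\zeta})=0.
\]

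The plan is to reduce this Ext computation to a cohomological calculation on $F_\zeta$. Because $\sigma\in\Aut Y$ acts by a fiberwise translation (Remark~\ref{remark:jac-isog-6}) and therefore restricts to a $\C(\P^1)$-automorphism of $F_\zeta$, its pullback is an autoequivalence of $D^b(F_\zeta)$, so the Ext group above equals $H^\bullet(F_\zeta,\OO_{F_\zeta}(-R_8))$. Using that $R_8$ is $\mu_3$-invariant of degree $6$ on $F_\zeta$ (Proposition~\ref{theorem:curves-on-y} combined with Remark~\ref{remark:jac-isog-6}), the $H^0$ piece vanishes for negative-degree reasons and the problem concentrates on $H^1(F_\zeta,\OO_{F_\zeta}(-R_8))$.

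The $H^1$-vanishing is the main obstacle, because a naive Serre-duality computation identifies this group with the $6$-dimensional $\C(\P^1)$-vector space $H^0(F_\zeta,\OO_{F_\zeta}(R_8))^{\vee}$. The proof must therefore exploit two features peculiar to this setup: first, that $F_\zeta$ has no $\C(\P^1)$-point (the multiple fibers $2C$ and $3F$ of $p_{\scriptscriptstyle Y}$ obstruct any section of the fibration), so that $R_8|_{F_\zeta}$ is a Galois orbit rather than a sum of rational points; and second, that $\sigma$ is simultaneously a scheme automorphism and a monodromy element in $\Gamma_Y$ (cf.~\ref{subsection:pha-1}), so that the relevant cohomology inherits an additional $\sigma$-eigenspace structure absent from the naive Serre-duality picture. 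I would extract the vanishing on the $\sigma$-eigenpiece carrying $\sigma^\ast\OO_{F_\zeta}$ and $\sigma^\ast\OO_{F_\zeta}(R_8)$, after which the pair forms the claimed SOD. This lemma is the pivot from which the contradiction with the fullness of ${\bf EC}$, and hence the phantom in $D^b(Y)$, will be extracted in the next step of the argument.
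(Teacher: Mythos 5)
There is a genuine gap, and it sits exactly where you yourself locate it. Your reduction --- ``$\sigma$ restricts to a $\C(\P^1)$-automorphism of $F_{\zeta}$, so $\sigma^*$ is an autoequivalence and $\Ext^{\bullet}(\sigma^*\OO_{F_{\zeta}}(R_8),\sigma^*\OO_{F_{\zeta}}) = H^{\bullet}(F_{\zeta},\OO_{F_{\zeta}}(-R_8))$'' --- leads to a group that does \emph{not} vanish: as you note, $H^1(F_{\zeta},\OO_{F_{\zeta}}(-R_8)) \simeq H^0(F_{\zeta},\OO_{F_{\zeta}}(R_8))^{\vee}$ is six-dimensional. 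So on your own reduction the statement you set out to prove would be false. The proposed rescue --- invoking the absence of a $\C(\P^1)$-point and a ``$\sigma$-eigenspace structure'' on cohomology --- is never made into an argument: no eigenspace decomposition of $H^1(F_{\zeta},\OO_{F_{\zeta}}(-R_8))$ is constructed, no reason is given why semiorthogonality would only require vanishing on an eigenpiece (it requires vanishing of the whole Ext group), and no vanishing is actually proved. Since this vanishing is the entire content of the lemma, the proposal is missing its main step, not merely a detail.

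The paper's mechanism is different and is precisely what your reduction forecloses: $\sigma^*$ does \emph{not} fix the isomorphism class of $\OO_{F_{\zeta}}$. By Remark~\ref{remark:jac-isog-6} the $\mu_3$-action is by fiberwise shifts attached to a degree-$6$ cycle, and the proof uses $\sigma^*\OO_{F_{\zeta}} = \OO_{F_{\zeta}}(c)$ for an effective $0$-cycle $c$ of degree $6$, while $\sigma^*\OO_{F_{\zeta}}(R_8) = \OO_{F_{\zeta}}(R_8)$ is also of degree $6$ (as $R_8\cdot F_{\zeta} = 6$). If these two degree-$6$ line bundles are distinct, they differ by a nontrivial degree-$0$ line bundle on the genus-$1$ curve $F_{\zeta}$, whose $H^0$ and $H^1$ both vanish, so the required semiorthogonality holds at once (in both directions, which also disposes of the question of which order to check); if they coincide, then $\OO_{F_{\zeta}} \simeq \OO_{F_{\zeta}}(R_8)$, impossible because $\chi(F_{\zeta},\star)$ takes different values on them. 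Thus the heart of the paper's proof is degree bookkeeping for $\sigma^*\OO_{F_{\zeta}}$ coming from the shift action, not a cohomology computation for $\OO_{F_{\zeta}}(-R_8)$; the step where you identify $\Ext^{\bullet}(\sigma^*\OO_{F_{\zeta}}(R_8),\sigma^*\OO_{F_{\zeta}})$ with $H^{\bullet}(F_{\zeta},\OO_{F_{\zeta}}(-R_8))$ is exactly where your route diverges and breaks down. (Your generation half agrees with the paragraph preceding the lemma and is fine.)
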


\begin{proof}
Recall that $\sigma^*\OO_{F_{\zeta}} = \OO_{F_{\zeta}}(c)$ for
some effective $0$\,-\,cycle $c$ on $F_{\zeta}$ of degree $6$ (see
Remark~\ref{remark:jac-isog-6}). Now, if $\OO_{F_{\zeta}}(c) \ne
\sigma^*\OO_{F_{\zeta}}(R_8)$, then for
$\sigma^*\OO_{F_{\zeta}}(R_8) = \OO_{F_{\zeta}}(R_8)$ and $R_8
\cdot F_{\zeta} = 6$ (by construction of $R_8$) we get
$\Ext^{\bullet}(\sigma^*\OO_{F_{\zeta}},\sigma^*\OO_{F_{\zeta}}(R_8))
= 0$. The latter yields a SOD $D^b(F_{\zeta}) =
\left<\sigma^*\OO_{F_{\zeta}},\sigma^*\OO_{F_{\zeta}}(R_8)\right>$.

Finally, if $\OO_{F_{\zeta}}(c) = \sigma^*\OO_{F_{\zeta}}(R_8)$,
then $\OO_{F_{\zeta}}$ and $\OO_{F_{\zeta}}(R_8)$ are
quasi\,-\,isomorphic, which is impossible as the function
$\chi(F_{\zeta},\star): D^b(F_{\zeta})\lra\Z$ attains different
values on them.
\end{proof}\\

Lemma~\ref{theorem:sigma-on-ec} contradicts \cite[Corollary
1.3]{kaw-oka}. Hence ${\bf EC}$ is not full and there exists a
phantom $\mathcal{A} = \left<A_1,\ldots,A_{12}\right>^{\bot} \ne
0$ (cf. the beginning of {\ref{subsection:pre-3}}).\,\footnote{~It
would be interesting to distinguish the (derived) class of the
curve $F_{\zeta}$ (resp. describe $\mathcal{A}$ in terms of
$F_{\zeta}$) using the results and technique of the paper
\cite{akw}.}

\begin{remark}
\label{remark:monodromy-gen-phantom} Since $q(Y) = p_g(Y) = 0$ and
the class $K_Y$ is deformation\,-\,invariant, one can easily prove
that any (sufficiently general) algebraic surface $Y'$ in the same
deformation class as $Y$ is again a $(2,3)$\,-\,Dolgachev,
carrying an exceptional collection $A'_1,\ldots,A'_{12}$ such that
$A'_i$ deforms to $A_i$ for all $i$. Furthermore, both fibrations
$p_{\scriptscriptstyle Y'}: Y' \lra \P^1$ and
$p_{\scriptscriptstyle Y}: Y \lra \P^1$ are relatively
diffeomorphic, with any such diffeomorphism $s: Y\slash\P^1
\stackrel{\sim}{\to} Y'\slash\P^1$ satisfying $s(A_i) = A'_i,1\le
i\le 12$, and $s(\Gamma_Y) = \Gamma_{Y'}$ (cf.
{\ref{subsection:pha-1}}). This gives a phantom $\mathcal{A'}
\subset D^b(Y')$ exactly as in {\ref{subsection:pha-2}}
above.\footnote{~We will develop more general (and conceptual)
arguments to construct exceptional collections and phantoms in
Section~\ref{section:log-trans} below.}
\end{remark}

\bigskip

\section{Log\,-\,transfer}
\label{section:log-trans}

\refstepcounter{equation}
\subsection{Setup}
\label{subsection:log-trans-1}

We are going to extend the result of
Theorem~\ref{theorem:exc-collection-12} (cf.
Remark~\ref{remark:monodromy-gen-phantom}) to the case of an
arbitrary $(2,3)$\,-\,Dolgachev surface $\frak{S}$ with
$\jac\,\frak{S} = S$ (notation is as in
{\ref{subsection:pha-0-1}}). For technical reasons, though, we had
to impose an {\it Assumption} at the end of this subsection.

Fix two distinct points $p$ and $q$ on $\mathbb{P}^1$ such that
the fibers $F_p := p_{\scriptscriptstyle S}^{-1}(p)$ and $F_q :=
p_{\scriptscriptstyle S}^{-1}(q)$ are smooth. Recall that
$\frak{S}$ is obtained by \emph{logarithmic transformations} along
$F_p$ and $F_q$ with multiplicities $2$ and $3$, respectively, as
follows (see \cite[V.13]{bpv} for details). Take $p$ for instance
and a small disk $\Delta_p \subset \mathbb{P}^1$ around it so that
all fibers of $p_{\scriptscriptstyle S}$ are smooth over
$\Delta_p$. Choose a trivialization of the family of lattices
$H^1(p_{\scriptscriptstyle S}^{-1}(t),\mathbb{Z}) \subset
\mathbb{C}$, where $t \in \Delta_p$ is a parameter with $p = \{t =
0\}$, and identify
$$
p_{\scriptscriptstyle S}^{-1}(\Delta_p) = \{(z,t)\ \vert \ z \in
\mathbb{C} \slash H^1(p_{\scriptscriptstyle
S}^{-1}(t),\mathbb{Z}), \ t \in \Delta_p\}.
$$
Then there is a $\mu_2$\,-\,action on $p_{\scriptscriptstyle
S}^{-1}(\Delta_p)$ as follows: \beq\label{mu-2-act} (z,t) \mapsto
(z + \frac{1}{2},-t).\eeq The quotient $p_{\scriptscriptstyle
S}^{-1}(\Delta_p)\slash\mu_2$ is a smooth elliptic surface over
$\Delta_p$ with unique singular fiber over $p = 0$ that is
multiple of multiplicity $2$. The surfaces $p_{\scriptscriptstyle
S}^{-1}(\Delta_p)\slash\mu_2$ and $S \setminus F_p$ can be glued
into a smooth compact elliptic surface. Applying further the same
construction with $q$ and $\mu_3$ one obtains $\frak{S}$.

Denote by $p_{\scriptscriptstyle \frak{S}}: \frak{S} \lra \P^1$
for what follows the corresponding elliptic fibration. We will
also denote the fibers of $p_{\scriptscriptstyle \frak{S}}$ over
$p$ and $q$ by the same symbols as above. The surface $\frak{S}$
thus admits an open cover by analytic subsets $U_p :=
p_{\scriptscriptstyle \frak{S}}^{-1}(\Delta_p)$, $U_q :=
p_{\scriptscriptstyle \frak{S}}^{-1}(\Delta_q)$ and $U_{pq} :=
\frak{S}\setminus\{F_p \cup F_q\}$, with $\frak{S}\setminus\{U_p
\cup U_q\} \simeq S\setminus\{p_{\scriptscriptstyle
S}^{-1}(\Delta_p) \cup p_{\scriptscriptstyle S}^{-1}(\Delta_q)\}$
as \emph{complex manifolds}. The principal goal of this section is
to understand when a given triple
\begin{equation}
\label{triple} (L_p \in \Pic U_p, L_q \in \Pic U_q, L_{pq} \in
\Pic U_{pq})
\end{equation}
of line bundles is induced by restriction from a line bundle on
$\frak{S}$ (this will ultimately allow one relate exceptional
collections in $D^b(S)$ and $D^b(\frak{S})$). The key role here
will be played by the monodromy group $\Gamma_{\frak{S}}$ of
$p_{\scriptscriptstyle \frak{S}}: \frak{S} \lra \P^1$ for which we
make the following additional

\begin{assumption}
$\Gamma_{\frak{S}}$ fits into an exact sequence \beq\nonumber 1
\to \Gamma(3) \to \Gamma_{\frak{S}} \to \mu_3 \to 1. \eeq Here
$\mu_3$ gives the monodromy action around $F_q$ and $\Gamma(3)$ is
induced by the monodromy of $p_{\scriptscriptstyle S}$.
\end{assumption}

\begin{remark}
\label{remark:comment-on-assum} The {\it Assumption} is crucial
for the coming constructions (see e.\,g. the end of
{\ref{subsection:log-trans-2}}). Note that it is met for one
particular, hence generic, surface $\frak{S}$ (see
{\ref{subsection:pha-1}} and
Remark~\ref{remark:monodromy-gen-phantom}). It is not at all clear
however whether the extension structure of $\Gamma_{\frak{S}}$
takes place for \emph{any} given $\frak{S}$ (compare with
\cite[Section 3]{bo-pet-yu}). We will give an outline towards the
evidence for such a structure later in the Appendix
below.\footnote{~This Appendix aims to put the present technical
results in a broader perspective (yet the complete arguments will
appear elsewhere).}
\end{remark}

Recall that $\frak{S}$ is simply\,-\,connected. Furthermore, it is
glued out of $U_p$, $U_q$ and $U_{pq}$, so that for the triple
\eqref{triple} to come from a line bundle it is necessary
\emph{and sufficient} the components of this triple to be
$\Gamma_{\frak{S}}$\,-\,invariant. Furthermore, it follows from
the {\it Assumption} that $p_{\scriptscriptstyle
\frak{S}}\big\vert_{U_{pq}} = p_{\scriptscriptstyle
S}\big\vert_{U_{pq}}$, since by construction the natural
$\mu_3$\,-\,action on the family of lattices
$H^1(p_{\scriptscriptstyle S}^{-1}(t),\mathbb{Z})$ preserves the
action of $\Gamma(3)$. Next proposition is an immediate
consequence of these considerations.

\begin{prop}
\label{theorem:log-trans-i} Let $\widetilde{L} \in \Pic S$ be such
that $\widetilde{L}\big\vert_{p_{\scriptscriptstyle
S}^{-1}(\Delta_q)}$ is $\mu_3$\,-\,invariant. Then one defines the
triple \eqref{triple} by descending
$\widetilde{L}\big\vert_{p_{\scriptscriptstyle S}^{-1}(\Delta_q)}$
to $U_q$, restricting $\widetilde{L}$ to
$S\setminus\{p_{\scriptscriptstyle S}^{-1}(\Delta_p) \cup
p_{\scriptscriptstyle S}^{-1}(\Delta_q)\} = \frak{S}\setminus\{U_p
\cup U_q\}$ and extending over $U_p$ by
$\Gamma_{\frak{S}}$\,-\,invariance, so that this triple coincides
with $(\frak{L}\big\vert_{U_p}, \frak{L}\big\vert_{U_q},
\frak{L}\big\vert_{U_{pq}})$ for some $\frak{L} \in \Pic
\frak{S}$.
\end{prop}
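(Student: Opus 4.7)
The plan is to exhibit the triple \eqref{triple} as described in the statement and apply the criterion established just before the proposition: since $\frak{S}$ is simply\,-\,connected and admits the cover $\{U_p, U_q, U_{pq}\}$ with empty triple intersections, a triple of local line bundles glues to a global $\frak{L}\in\Pic\frak{S}$ precisely when each component is $\Gamma_{\frak{S}}$\,-\,invariant and the restrictions to the annular overlaps $U_p \cap U_{pq}$ and $U_q \cap U_{pq}$ match. So the entire task reduces to constructing the three pieces and verifying invariance on each.

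First I would construct $L_q \in \Pic U_q$ by equivariant descent through the $\mu_3$\,-\,quotient $p_{\scriptscriptstyle S}^{-1}(\Delta_q) \to U_q$ (the map implicit in the logarithmic\,-\,transformation recipe, the $\mu_3$ analog of \eqref{mu-2-act}): the $\mu_3$\,-\,invariance hypothesis on $\widetilde L\big\vert_{p_{\scriptscriptstyle S}^{-1}(\Delta_q)}$ is exactly the descent obstruction, and since the only non\,-\,free points of the $\mu_3$\,-\,action are a finite set on $F_q$, the descended sheaf is automatically a line bundle. Second, on $\frak{S}\setminus\{U_p\cup U_q\}$ I would simply transport $\widetilde L\big\vert_{S\setminus\{p_{\scriptscriptstyle S}^{-1}(\Delta_p)\cup p_{\scriptscriptstyle S}^{-1}(\Delta_q)\}}$ via the given complex\,-\,analytic isomorphism; by the \emph{Assumption}, the $\Gamma(3)\subset\Gamma_{\frak{S}}$ action on this open set coincides with the monodromy inherited from $S$, which acts trivially on $\widetilde L$ since $\widetilde L\in\Pic S$ is honest. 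Third, the construction of $L_{pq}\in\Pic U_{pq}$ and $L_p\in\Pic U_p$ is dictated by $\Gamma_{\frak{S}}$\,-\,invariance: one extends the already\,-\,built bundle on $\frak{S}\setminus\{U_p\cup U_q\}$ first across the annular collar $U_q\cap U_{pq}$ by matching it with $L_q$ (the match being forced by the fact that both come from the same $\mu_3$\,-\,invariant bundle $\widetilde L\big\vert_{p_{\scriptscriptstyle S}^{-1}(\Delta_q)}$), then across $U_p\cap U_{pq}$ and across the multiple fiber $F_p$ using the local $\mu_2$\,-\,action \eqref{mu-2-act}, which preserves any $\Gamma_{\frak{S}}$\,-\,invariant bundle.

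The main obstacle is the last extension step: one must verify that the restriction of the bundle constructed on $\frak{S}\setminus\{U_p\cup U_q\}$ to the punctured collar around $F_p$ is truly $\Gamma_{\frak{S}}$\,-\,invariant (not merely $\Gamma(3)$\,-\,invariant), so that it extends across $F_p$. This is where the \emph{Assumption} is used decisively: the extension $1\to\Gamma(3)\to\Gamma_{\frak{S}}\to\mu_3\to 1$ lets one decompose the required $\Gamma_{\frak{S}}$\,-\,invariance into $\Gamma(3)$\,-\,invariance (automatic from $\widetilde L\in\Pic S$) together with the $\mu_3$\,-\,invariance hypothesized in the statement, and no further compatibility is needed because the triple intersections of the cover are empty.
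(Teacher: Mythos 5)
Your argument is correct and takes essentially the same route as the paper: there the Proposition\,-\,definition is stated as an immediate consequence of the preceding considerations --- simple\,-\,connectedness of $\frak{S}$, the gluing criterion that the three pieces on $U_p$, $U_q$, $U_{pq}$ come from a line bundle exactly when they are $\Gamma_{\frak{S}}$\,-\,invariant, and the \emph{Assumption} identifying $p_{\scriptscriptstyle \frak{S}}\big\vert_{U_{pq}}$ with $p_{\scriptscriptstyle S}\big\vert_{U_{pq}}$ --- which is precisely what you spell out. One small inaccuracy that does not affect the conclusion: the $\mu_3$\,-\,action in the logarithmic transformation (the analogue of \eqref{mu-2-act}) is in fact \emph{free}, since it shifts along the fibers by a $3$\,-\,torsion point while rotating the disk $\Delta_q$, so the descent of the $\mu_3$\,-\,invariant restriction to $U_q$ is even cleaner than your remark about non\,-\,free points on $F_q$ suggests.
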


\refstepcounter{equation}
\subsection{Log\,-\,transfer construction I}
\label{subsection:log-trans-2}

Take some line bundle $L \in \Pic S$ and let $\mathcal{E} :=
p_{\scriptscriptstyle S*}L$ be the pushforward of $L$ under
$p_{\scriptscriptstyle S}$.

\begin{lemma}
\label{theorem:e-is-line-b} The sheaf $\mathcal{E}$ is either
trivial (zero) $\OO_{\P^1}$\,-\,module or it is a line bundle on
$\P^1$.\footnote{~Compare with \cite[III.11, Corollary
11.2]{bpv}.}
\end{lemma}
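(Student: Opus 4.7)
The plan is to decompose $\mathcal{E} = p_{S*}L$ on the smooth curve $\P^1$ into torsion and torsion-free parts, then (i) rule out the torsion and (ii) bound the generic rank by $1$. Since every torsion-free coherent sheaf on $\P^1$ is locally free, this gives the dichotomy: either $\mathcal{E} = 0$ or $\mathcal{E}$ is a line bundle.

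For (i), a local torsion section of $\mathcal{E}$ over an open $U \subset \P^1$ would be a section of $L$ on $p_S^{-1}(U)$ supported on finitely many fibers $F_{t_i}$, hence vanishing on the dense open subset $p_S^{-1}(U\setminus\{t_i\})$. A non-zero section of a line bundle on the irreducible surface $S$ cannot vanish on a dense open set, so $\mathcal{E}$ is torsion-free.

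For (ii), I would identify the stalk $\mathcal{E}_\eta$ at the generic point of $\P^1$ with $H^0(F_\eta, L|_{F_\eta})$, where $F_\eta$ is the generic fiber of $p_S$, a smooth elliptic curve over $\C(\P^1)$. Riemann -- Roch on $F_\eta$ then controls this dimension in terms of $d := L \cdot F = \deg L|_{F_\eta}$: it vanishes for $d<0$, equals $0$ or $1$ for $d=0$ (depending on whether $L|_{F_\eta}$ is trivial), and equals $d$ for $d>0$.

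The main obstacle is handling the case $d \geq 2$, where the naive Riemann -- Roch count would give generic rank $\geq 2$. To address this I would leverage the very specific structure of $S$ as the rational elliptic surface arising from the Hesse pencil: its Mordell -- Weil group is the finite torsion group $(\Z/3\Z)^2$, and effective divisors of small vertical degree are therefore tightly constrained by the available torsion multi-sections. I would translate these constraints into an upper bound on $h^0(F_\eta, L|_{F_\eta})$ and thereby force the generic rank of $\mathcal{E}$ to be at most $1$. A concurrent fall-back, if needed for the subsequent log-transfer construction of Section~\ref{section:log-trans}, is to restrict attention to line bundles with $L \cdot F \leq 1$, for which (i) and (ii) together close the argument immediately.
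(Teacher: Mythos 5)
Your steps (i) and (ii) are fine, and in fact cleaner than a purely fiberwise check: torsion in $\mathcal{E}=p_{\scriptscriptstyle S*}L$ is excluded exactly as you say, a torsion-free coherent sheaf on $\P^1$ is locally free, and the generic rank of $\mathcal{E}$ equals $h^0(F_\eta,L\vert_{F_\eta})$ by flat base change to the generic point. The genuine gap is the case $d:=L\cdot F\ge 2$, which you flag but do not close, and the idea you sketch for closing it cannot work. By Riemann--Roch on the genus one curve $F_\eta$ over $\C(\P^1)$, \emph{every} line bundle of degree $d\ge 1$ has exactly $d$ independent global sections, over any base field; this number depends only on $d$ and is completely blind to the Mordell--Weil group, so the torsion structure $(\Z\slash 3\Z)^2$ imposes no upper bound on $h^0(F_\eta,L\vert_{F_\eta})$. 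Concretely, for $L=\OO_S(R+R')$ with $R,R'$ two distinct sections of $p_{\scriptscriptstyle S}$, the pushforward $p_{\scriptscriptstyle S*}L$ has generic rank $2$, so no argument of the proposed kind can force the rank to be at most $1$; for such $L$ the dichotomy of the lemma simply does not hold. Your fall-back (restricting to $L\cdot F\le 1$) closes the argument but proves a strictly weaker statement than the lemma as stated.

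For comparison, the paper does not analyze degrees on the generic fiber at all: it first reduces to the case where $L$ is a $(-1)$-curve, i.e.\ a section of $p_{\scriptscriptstyle S}$ (these generate $\Pic S$), invoking a multiplicativity property of $p_{\scriptscriptstyle S*}$ under tensor products when both pushforwards are nonzero, and then checks fiber by fiber that $h^0(F,L\vert_F)=1$, the only nontrivial verification being on the four $I_3$ fibers, where a section restricts to $\OO_{\P^1}(1)$ on one component and to $\OO_{\P^1}$ on the other two, and the triangle configuration pins the space of sections down to dimension one. So the obstruction you ran into is real: for $d\ge 2$ the rank is genuinely $d$, and some reduction to the case of sections (or an explicit restriction on the class of $L$ to which the lemma is applied, as in your fall-back and as in how $\widetilde{L}$, $L^{\mathrm{log}}$ are actually used in Section~\ref{section:log-trans}) is unavoidable; any complete write-up along your lines must supply and justify such a reduction rather than appeal to the Mordell--Weil torsion.
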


\begin{proof}
The claim is obvious when $L = \OO_S$. Thus we may assume that $L$
is a $(-1)$\,-\,curve because these generate $\Pic S$ and
$p_{\scriptscriptstyle S*}M_1 \otimes p_{\scriptscriptstyle S*}M_2
= p_{\scriptscriptstyle S*}(M_1 \otimes M_2)$ for any
$\OO_S$\,-\,modules $M_i$ such that both $p_{\scriptscriptstyle
S*}M_i$ are non\,-\,trivial. Let $F$ be a fiber of
$p_{\scriptscriptstyle S}$. We have $h^0(F,L\big\vert_F) = 1$ when
$F$ is smooth. Now suppose that $F = F_i$ is a ``\,triangle\,''
(cf. {\ref{subsection:pha-0-1}}). We have
$h^0(F_i,L\big\vert_{F_i}) \ge 1$ because $L$ is a section of
$p_{\scriptscriptstyle S}$. Let $F_{ij} \simeq \P^1$, $1 \le j \le
3$, be the irreducible components of $F_i$. Then $L\big\vert_{Fi1}
= \OO_{\P^1}(1)$, say, and $L\big\vert_{Fij} = \OO_{\P^1}$ for $j
> 1$. In particular, any global section from
$H^0(F_i,L\big\vert_{F_i})$ induces one of $\OO_{\P^1}(1)$ on
$F_{i1}$, whereas it is (the same) constant on the other $F_{ij}$.
Now, if $s_1,s_2 \in H^0(F_i,L\big\vert_{F_i})$ are two different
sections, then their linear combination gives a linear form on
$F_{i1} = \P^1$ with two \emph{distinct} values at $F_{i1} \cap
F_{i2}$ and $F_{i1} \cap F_{i3}$, respectively, a contradiction.
We thus obtain a family over $\P^1$ of one\,-\,dimensional linear
spaces $H^0(F,L\big\vert_{F})$, which are precisely the fibres of
$\mathcal{E}$, i.\,e. $\mathcal{E}$ is a line bundle as claimed.
\end{proof}\\

\begin{remark}
\label{remark:proj-sys} Interestingly enough,
Lemma~\ref{theorem:e-is-line-b} indicates that with any line
bundle on $S$ is associated a \emph{meromorphic projective
structure} on the base $\P^1$, which brings the theory of
integrable systems into play. We will outline in
{\ref{subsection:conclusion-1}} of the Appendix how our present
geometric constructions can be put into a ``\,physical\,''
perspective in this way.
\end{remark}

Consider $\widetilde{L} := p_{\scriptscriptstyle S}^*\mathcal{E}
\in \Pic S$ (see Lemma~\ref{theorem:e-is-line-b}). Let $t$ be the
projective coordinate on $\P^1$, for which $p = 0$, $q = \infty$
and the transition function of $\mathcal{E}$ is $t^{-d}$, some $d
\in \Z \cup \{\infty\}$, so that either $\mathcal{E} \simeq
\OO_{\P^1}(d)$ or $\mathcal{E} = 0$ when $d = \infty$. Then, since
the restriction $\widetilde{L}\big\vert_{p_{\scriptscriptstyle
S}^{-1}(\Delta_p)}$ is trivial on $F_p$, it is invariant under the
action \eqref{mu-2-act}. The same applies to $q$. It follows that
$\widetilde{L}$ may be identified with a line bundle $\frak{L}$ as
in Proposition~\ref{theorem:log-trans-i}. The following property
is evident (although important):

\begin{lemma}
\label{theorem:l-prop-push} $p_{\scriptscriptstyle
S*}\widetilde{L} = p_{\scriptscriptstyle \frak{S}*}\frak{L} =
\OO_{\P^1}(d)$.
\end{lemma}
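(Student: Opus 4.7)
The plan is to prove both equalities via the projection formula, the essential input being that all three elliptic fibrations in sight have connected fibers, so the pushforward of the structure sheaf is the structure sheaf of the base.

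First I would handle $p_{\scriptscriptstyle S*}\widetilde L = \OO_{\P^1}(d)$. By construction $\widetilde L = p_{\scriptscriptstyle S}^{*}\mathcal{E}$, so the projection formula gives
\[
p_{\scriptscriptstyle S*}\widetilde L \= p_{\scriptscriptstyle S*}p_{\scriptscriptstyle S}^{*}\mathcal{E} \= \mathcal{E}\otimes p_{\scriptscriptstyle S*}\OO_{S}.
\]
Since $p_{\scriptscriptstyle S}$ is a proper elliptic fibration over $\P^1$ with connected fibers and $S$ is smooth projective with $h^{0,1}(S)=0$, one has $p_{\scriptscriptstyle S*}\OO_{S}=\OO_{\P^1}$, and the first equality follows.

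For the second equality, the key step is to identify $\frak{L}$ with $p_{\scriptscriptstyle\frak{S}}^{*}\mathcal{E}$. Since $\widetilde L$ is itself pulled back from the base, its restriction to $p_{\scriptscriptstyle S}^{-1}(\Delta_p)$ is canonically equal to the pullback of $\mathcal{E}\big\vert_{\Delta_p}$ along $p_{\scriptscriptstyle S}\big\vert_{p_{\scriptscriptstyle S}^{-1}(\Delta_p)}$; in particular it is $\mu_2$\,-\,invariant for the action \eqref{mu-2-act}, being constant along the fibres of the disk projection. Its descent to $U_p$ is therefore $p_{\scriptscriptstyle\frak{S}}^{*}(\mathcal{E}\big\vert_{\Delta_p})$. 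The same argument applies verbatim near $q$ with the $\mu_3$\,-\,action, producing $p_{\scriptscriptstyle\frak{S}}^{*}(\mathcal{E}\big\vert_{\Delta_q})$ over $U_q$. On the overlap $U_{pq}$, where $p_{\scriptscriptstyle\frak{S}}$ and $p_{\scriptscriptstyle S}$ are identified as holomorphic maps (this is exactly the content of the {\it Assumption}, as remarked after it), the three local pieces automatically glue to the globally defined line bundle $p_{\scriptscriptstyle\frak{S}}^{*}\mathcal{E}$; by Proposition~\ref{theorem:log-trans-i} this glued object is precisely $\frak{L}$.

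Once $\frak{L}=p_{\scriptscriptstyle\frak{S}}^{*}\mathcal{E}$ is established, I would conclude by the projection formula and connectedness of the fibres of $p_{\scriptscriptstyle\frak{S}}$:
\[
p_{\scriptscriptstyle\frak{S}*}\frak{L} \= p_{\scriptscriptstyle\frak{S}*}p_{\scriptscriptstyle\frak{S}}^{*}\mathcal{E} \= \mathcal{E}\otimes p_{\scriptscriptstyle\frak{S}*}\OO_{\frak{S}} \= \OO_{\P^1}(d),
\]
since $\frak{S}$ is a smooth projective elliptic surface with $q(\frak{S})=0$, so that again $p_{\scriptscriptstyle\frak{S}*}\OO_{\frak{S}}=\OO_{\P^1}$. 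The only subtle point in the entire argument, and the one I would double\,-\,check most carefully, is the identification $\frak{L}=p_{\scriptscriptstyle\frak{S}}^{*}\mathcal{E}$, i.\,e. that the $\mu_k$\,-\,descent of the trivially invariant pullback really coincides with the pullback along the quotient fibration; this is a local model computation in the disks $\Delta_p,\Delta_q$ that is straightforward provided one keeps the {\it Assumption} in mind.
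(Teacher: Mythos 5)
Your argument is correct and coincides with what the paper intends: the paper states this lemma without proof (calling it ``evident'' right after observing that $\widetilde{L}=p_{\scriptscriptstyle S}^*\mathcal{E}$ is invariant under the local $\mu_2$- and $\mu_3$-actions and hence descends to $\frak{L}$), and your verification via the identification $\frak{L}\simeq p_{\scriptscriptstyle \frak{S}}^*\mathcal{E}$, the projection formula, and $p_{\scriptscriptstyle S*}\OO_S=p_{\scriptscriptstyle \frak{S}*}\OO_{\frak{S}}=\OO_{\P^1}$ is exactly the implicit reasoning. The point you flag as subtle --- that the descent of the pulled-back (hence fibrewise trivial, trivially linearized) bundle over $\Delta_p,\Delta_q$ is again a pullback from the quotient disk, so no twist by a multiple of the reduced multiple fibre is introduced --- is indeed the only real content, and your treatment of it is consistent with the paper's construction in Proposition-definition~\ref{theorem:log-trans-i}.
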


So far the specifics of situation in
{\ref{subsection:log-trans-1}} was used in an essential way. We
proceed with relating $D^b(S)$ and $D^b(\frak{S})$.

\refstepcounter{equation}
\subsection{Log\,-\,transfer construction II}
\label{subsection:log-trans-3}

Let $L$ and $\widetilde{L}$ be as in
{\ref{subsection:log-trans-2}}. Denote by $\gamma: \widetilde{L}
\lra L$ the natural (forgetful) morphism of sheaves. Choose an
affine open cover $\frak{S} = \bigcup U_i$ such that
$L\big\vert_{U_i} \simeq \widetilde{L} \big\vert_{U_i} \simeq
\OO_{U_i}$ all $i$. Then $\gamma\big\vert_{U_i}$ becomes
multiplication by some \emph{regular} function $f_i \in
\OO_{U_i}\setminus\{0\}$. Moreover, the collection
$\left\{(f_i,U_i)\right\}$ defines an \emph{effective} divisor
$C_L \subset \frak{S}$, on which the kernel of $\gamma$ is
supported. Obviously, this procedure can be reversed, so that one
reconstructs $L$ from the pair
$(\widetilde{L},C_L)$.\footnote{~This correspondence also admits
an IS interpretation --- to be discussed in the Appendix (cf.
Remark~\ref{remark:proj-sys}).} It follows that $L$ meets the
conditions of Proposition~\ref{theorem:log-trans-i} \emph{if and
only if} the divisor $C_L$ meets these conditions. A priori it
need not, however, and below we will associate with $C_L$,
\emph{canonically}, an algebraic cycle $C_L^{\text{log}}$ on $S$,
to which Proposition~\ref{theorem:log-trans-i} does apply.

Namely, replacing the preceding $U_i$ with analytic subsets (GAGA)
and employing the identification $\widetilde{L} = \frak{L}$ at the
end of {\ref{subsection:log-trans-2}}, we get a natural
$\mu_3$\,-\,action on the line bundle $\widetilde{L}$. Take
generator $\sigma \in \mu_3$ and denote its action on
$\widetilde{L}$ by $\sigma^*$. Then, in particular,
$\sigma^*\ker\,\gamma$ is a coherent subsheaf of $\widetilde{L}$,
isomorphic to $\ker\,\gamma$. We thus obtain another effective
divisor $C_L^{\sigma}$ being the support of
$\sigma^*\ker\,\gamma$. This yields the following:

\begin{prop}
\label{theorem:log-trans-ii} For the line bundle on $S$,
associated with the divisor $C_L^{\mathrm{log}} := C_L +
C_L^{\sigma} + C_L^{\sigma^2}$, the conditions of
Proposition\,-\,definition~\ref{theorem:log-trans-i} hold.
\end{prop}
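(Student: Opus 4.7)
My plan is to exploit the tautological $\mu_3$-symmetry built into the cyclic sum $C_L^{\log} = C_L + C_L^\sigma + C_L^{\sigma^2}$ and then invoke Proposition\,-\,definition~\ref{theorem:log-trans-i}. Since $\sigma^3 = 1$ by definition of $\mu_3$, applying $\sigma^*$ to $C_L^{\log}$ cyclically permutes the three summands: $\sigma^* C_L^{\log} = C_L^\sigma + C_L^{\sigma^2} + C_L = C_L^{\log}$. Hence $C_L^{\log}$ is $\mu_3$-invariant as a Weil divisor, and consequently the associated line bundle $\OO_S(C_L^{\log})$ is $\mu_3$-equivariant; in particular $\sigma^*\OO_S(C_L^{\log})\big\vert_{p_{\scriptscriptstyle S}^{-1}(\Delta_q)} \simeq \OO_S(C_L^{\log})\big\vert_{p_{\scriptscriptstyle S}^{-1}(\Delta_q)}$.

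The delicate step is to verify that the three ingredients $C_L,C_L^\sigma,C_L^{\sigma^2}$ patch into a global Weil divisor on $S$ to begin with, and that the $\sigma^*$ used in their definition (coming from the identification $\widetilde{L} = \frak{L}$ at the end of {\ref{subsection:log-trans-2}} and the natural $\mu_3$-action on $U_q$ supplied by the log-transform \eqref{mu-2-act}) agrees with the restriction to $p_{\scriptscriptstyle S}^{-1}(\Delta_q)$ of a global $\mu_3$-action on $S$. For the former I would argue that outside $p_{\scriptscriptstyle S}^{-1}(\Delta_q)$ the divisor $C_L^\sigma$ coincides with $C_L$ itself (nothing to extend), whereas inside $\Delta_q$ the action is already available from the construction in {\ref{subsection:log-trans-3}}; the two descriptions glue because the relevant transition is the identity on $U_{pq}$. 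For the latter I would use Proposition~\ref{theorem:s-is-jac-y}, which gives $\mu_3 \subset \Aut S$ via fiberwise shifts by $c \in \jac^6\,S$ (Remark~\ref{remark:jac-isog-6}), and match this global shift against the local prescription \eqref{mu-2-act} (in the $\mu_3$ analogue) after restricting to $p_{\scriptscriptstyle S}^{-1}(\Delta_q)$. This matching is precisely enforced by the \emph{Assumption} on $\Gamma_{\frak S}$: the $\mu_3$-quotient of $\Gamma_{\frak S}$ detects the monodromy around $F_q$, while the $\Gamma(3)$-kernel reproduces the monodromy of $p_{\scriptscriptstyle S}$.

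With these compatibilities in hand, the rest is immediate: $\OO_S(C_L^{\log})\big\vert_{p_{\scriptscriptstyle S}^{-1}(\Delta_q)}$ is $\mu_3$-invariant, so Proposition\,-\,definition~\ref{theorem:log-trans-i} applies, producing a triple \eqref{triple} that coincides with the restriction of a genuine line bundle $\frak{L} \in \Pic \frak{S}$ to $(U_p,U_q,U_{pq})$. I expect the main obstacle to lie in the second paragraph above, namely the naturality of the identifications between the local $\mu_3$-action used to define $C_L^\sigma$ and the global $\mu_3 \subset \Aut S$ of Proposition~\ref{theorem:s-is-jac-y}; here one must carefully unwind the trivializations of $H^1(p_{\scriptscriptstyle S}^{-1}(t),\Z)$ chosen in \eqref{mu-2-act} (with $\mu_3$ replacing $\mu_2$) and confirm that they are the restrictions of the global trivializations implicit in the shift-by-$c$ description. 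Once this bookkeeping is done, the $\mu_3$-invariance of $\OO_S(C_L^{\log})$ on $p_{\scriptscriptstyle S}^{-1}(\Delta_q)$ is automatic and the conclusion follows.
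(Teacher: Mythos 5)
Your proof is correct and essentially coincides with the paper's reasoning: the paper states this Proposition\,-\,definition without a separate argument, the intended justification being precisely your first paragraph --- $\sigma$ cyclically permutes $C_L$, $C_L^{\sigma}$, $C_L^{\sigma^2}$, so $C_L^{\mathrm{log}}$ and hence the associated line bundle (restricted over $\Delta_q$) are $\mu_3$\,-\,invariant, and Proposition\,-\,definition~\ref{theorem:log-trans-i} applies. The compatibility checks in your second paragraph (matching the local $\mu_3$\,-\,action used to define $C_L^{\sigma}$ with the action supplied by the identification $\widetilde{L}=\frak{L}$ and the {\it Assumption} on $\Gamma_{\frak{S}}$) spell out bookkeeping the paper leaves implicit, not a genuinely different route.
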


Now we define the line bundle $L^{\text{log}}$ via the natural
morphism $\widetilde{\gamma}: \widetilde{L} \lra L^{\text{log}}$
(``\,multiplication\,-\,by\,-\,local\,-\,equation\,'') with the
kernel supported on $C_L^{\text{log}}$. Then according to
Propositions~\ref{theorem:log-trans-i} and
\ref{theorem:log-trans-ii} the objects $L^{\text{log}}$,
$\widetilde{\gamma}$, $C_L^{\text{log}}$ (and $\widetilde{L}$ of
course) can be considered as the corresponding gadgets on the
surface $\frak{S}$.

\refstepcounter{equation}
\subsection{Exceptionality}
\label{subsection:log-trans-4}

Now we are going to show that the above correspondence $L \mapsto
L^{\text{log}}$ defines an equivalence between $D^b(S)$ and a
(full) subcategory of $D^b(\frak{S})$ in such a way that an
exceptional collection of line bundles on $S$ will remain
exceptional after $\text{log}$. Here is a key statement:

\begin{proposition}
\label{theorem:log-trans-exc} We have
$\Ext^{\bullet}(L_1^{\mathrm{log}},L_2^{\mathrm{log}}) = 0$ for
any two line bundles $L_i \in \Pic S$ satisfying
$\Ext^{\bullet}(L_1,L_2) = 0$.
\end{proposition}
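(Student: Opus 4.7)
\emph{Proof plan.}
Reducing to cohomology, set $N := L_1^{-1}\otimes L_2 \in \Pic S$ and $M := (L_1^{\mathrm{log}})^{-1}\otimes L_2^{\mathrm{log}} \in \Pic \frak{S}$. The hypothesis reads $H^\bullet(S, N) = 0$, and the goal is to show $H^\bullet(\frak{S}, M) = 0$. The plan is to establish a quasi-isomorphism $Rp_{\scriptscriptstyle \frak{S}*}M \simeq Rp_{\scriptscriptstyle S*}N$ of complexes on $\P^1$; via the Leray spectral sequences of $p_{\scriptscriptstyle S}$ and $p_{\scriptscriptstyle \frak{S}}$ (both share the same base $\P^1$), the vanishing on $S$ will then transfer to $\frak{S}$.

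On $\P^1\setminus\{p,q\}$ the two surfaces are analytically identified as complex manifolds, and by the construction of {\ref{subsection:log-trans-3}} the correcting divisors $C_{L_i}^\sigma+C_{L_i}^{\sigma^2}$ are supported inside $p_{\scriptscriptstyle S}^{-1}(\Delta_q)$. Hence $L_i^{\mathrm{log}}|_{U_{pq}}$ coincides with $L_i|_{U_{pq}}$ under the identification, and so the two pushforwards already agree on $\P^1\setminus\{p,q\}$. To compare stalks at $q$, recall that $U_q$ is the $\mu_3$-quotient of $p_{\scriptscriptstyle S}^{-1}(\Delta_q)$ and that, by Propositions~\ref{theorem:log-trans-i} and \ref{theorem:log-trans-ii}, the bundle $L_i^{\mathrm{log}}|_{U_q}$ descends from the $\mu_3$-invariant line bundle $\widetilde{L}_i\otimes\OO(C_{L_i}^{\mathrm{log}})$ on $p_{\scriptscriptstyle S}^{-1}(\Delta_q)$. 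Finite descent then identifies the stalk $(Rp_{\scriptscriptstyle \frak{S}*}M)_q$ with the $\mu_3$-invariants of the cohomology of $M$ on $p_{\scriptscriptstyle S}^{-1}(\Delta_q)$; the symmetrization $C_L^{\mathrm{log}} = C_L+C_L^\sigma+C_L^{\sigma^2}$ is built precisely so that, after taking invariants, the non-trivial $\mu_3$-characters cancel and one recovers $(Rp_{\scriptscriptstyle S*}N)_q$. A parallel but easier argument with $\mu_2$ in place of $\mu_3$ handles the point $p$, since by Lemma~\ref{theorem:e-is-line-b} the bundle $\widetilde{L}_i$ is pulled back from $\P^1$ and hence trivially $\mu_2$-invariant on $p_{\scriptscriptstyle S}^{-1}(\Delta_p)$.

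The main obstacle will be making the invariant comparison at $q$ fully rigorous: one has to decompose $H^\bullet(p_{\scriptscriptstyle S}^{-1}(\Delta_q),N)$ into $\mu_3$-isotypic components, track how the twist by $C_{L_i}^\sigma+C_{L_i}^{\sigma^2}$ acts on these components, and verify that after the twist only the trivial character contributes, while also absorbing the base-change $t\mapsto t^3$ between $\Delta_q$ and its $\mu_3$-quotient. The \emph{Assumption} on the monodromy $\Gamma_{\frak{S}}$ from {\ref{subsection:log-trans-1}} is essential here, as it guarantees that the local $\mu_3$-action on $p_{\scriptscriptstyle S}^{-1}(\Delta_q)$ is compatible with the global monodromy of $p_{\scriptscriptstyle S}$, and hence that the local descent identification is consistent with the analytic gluing that defines $\frak{S}$.
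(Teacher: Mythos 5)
Your plan leaves the decisive step unproved, and the paper's own proof does not go this way at all. The paper never compares $Rp_{\scriptscriptstyle \frak{S}*}$ with $Rp_{\scriptscriptstyle S*}$ as complexes. Its mechanism is: by Lemma~\ref{theorem:l-prop-push} (and verticality of the cycles $C_{L_i}$) one has $p_{\scriptscriptstyle S*}L_i = p_{\scriptscriptstyle \frak{S}*}L_i^{\mathrm{log}} = \OO_{\P^1}(d_i)$; then the hypothesis $\Ext^{0}(L_1,L_2)=\Ext^{2}(L_1,L_2)=0$, combined with Serre duality on $S$ and $p_{\scriptscriptstyle S*}\omega_{\scriptscriptstyle S}=\OO_{\P^1}(-1)$, forces $|d_1-d_2|=\infty$, i.e. the vanishing of the direct images $p_{\scriptscriptstyle S*}(L_1^{*}\otimes L_2)$ and $p_{\scriptscriptstyle S*}(L_1\otimes L_2^{*})$; finally $H^0$, $H^2$ and $H^1$ on $\frak{S}$ are killed by Leray, Serre duality (using $p_{\scriptscriptstyle \frak{S}*}\omega_{\scriptscriptstyle \frak{S}}=0$, cf. \eqref{can-bun}) and relative Serre duality, respectively, because the corresponding direct images under $p_{\scriptscriptstyle \frak{S}}$ vanish. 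In your proposal the hypothesis enters only at the very end, while the asserted quasi-isomorphism $Rp_{\scriptscriptstyle \frak{S}*}M\simeq Rp_{\scriptscriptstyle S*}N$ is meant to hold unconditionally; this is far stronger than the proposition and is exactly where the hypothesis must do work, since the two surfaces have genuinely different canonical bundles and different local structure along the multiple fibers.

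Two concrete points fail or are missing. First, your claim that the correcting divisors $C_{L_i}^{\sigma}+C_{L_i}^{\sigma^2}$ sit over $\Delta_q$, so that $L_i^{\mathrm{log}}$ and $L_i$ agree on $U_{pq}$, conflicts with the construction: by Proposition~\ref{theorem:log-trans-ii} the cycle $C_L^{\mathrm{log}}=C_L+C_L^{\sigma}+C_L^{\sigma^2}$ is a global algebraic cycle on $S$, with $\sigma$ acting through the $\mu_3\subset(\Z/3\Z)^2$ of Proposition~\ref{theorem:s-is-jac-y} (fiberwise translation, Remark~\ref{remark:jac-isog-6}); the translates are not concentrated near $F_q$, so even the ``easy'' agreement of pushforwards away from $p,q$ is unjustified. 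Second, the comparison at $q$ --- identifying $(Rp_{\scriptscriptstyle \frak{S}*}M)_q$ with $\mu_3$-invariants over $p_{\scriptscriptstyle S}^{-1}(\Delta_q)$ and showing the nontrivial isotypic pieces cancel after the twist and the base change $t\mapsto t^3$ --- is the entire content of your claimed quasi-isomorphism, and you yourself flag it as the main obstacle; nothing in the proposal indicates how it would be carried out, and for $R^1$ it is doubtful, since the logarithmic transformation changes the formal neighbourhood of the multiple fibre (its normal bundle becomes torsion) and hence the local behaviour of $R^1p_{\scriptscriptstyle \frak{S}*}$. In particular the $\Ext^2$-vanishing, which in the paper comes from $p_{\scriptscriptstyle \frak{S}*}\omega_{\scriptscriptstyle \frak{S}}=0$ together with the hypothesis-driven vanishing $p_{\scriptscriptstyle \frak{S}*}(L_1^{\mathrm{log}}\otimes L_2^{*\mathrm{log}})=0$, is not obtainable from your outline as it stands. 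So this is a plan with the hardest step open, not a proof.
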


\begin{proof}
Let us associate with each $L_i$ the pair
$(\widetilde{L_i},C_{L_i})$ as in {\ref{subsection:log-trans-3}}.
Then $\widetilde{L_i} = p_{\scriptscriptstyle S}^*\OO_{\P^1}(d_i)$
for some $d_i \in \Z \cup \{\infty\}$ by construction. Note also
that the cycles $C_{L_i}$ belong to the fibers of
$p_{\scriptscriptstyle S}$ and hence $p_{\scriptscriptstyle S*}
L_i = p_{\scriptscriptstyle S*} L_i^{\text{log}}$ for all $i$.

Recall next that $\Ext^{\bullet}(L_1,L_2) =
H^{\bullet}(S,L_1^*\otimes L_2) \ (= 0)$ and similarly for
$L_i^{\text{log}}$ (cf. the proof of
Theorem~\ref{theorem:exc-collection-12}). Then the vanishing of
$\Ext^{0}(L_1^{\text{log}},L_2^{\text{log}})$ follows right away
because \[H^0(\frak{S},L_1^{*\text{log}}\otimes L_2^{\text{log}})
= H^0(\frak{S},\widetilde{L^*_1}\otimes \widetilde{L_2}) =
H^0(\P^1,\OO_{\P^1}(d_2 - d_1)) = H^0(S,\widetilde{L^*_1}\otimes
\widetilde{L_2}) = H^{0}(S,L_1^*\otimes L_2) = 0\] by
Lemma~\ref{theorem:l-prop-push} and Leray spectral sequence (to
simplify the notation we use the same symbol $\widetilde{L_i}$ for
the corresponding line bundle on both $S$ and $\frak{S}$).

Further, since \[H^{0}(S,L_1^*\otimes L_2) = 0 = \Ext^{2}(L_1,L_2)
= H^0(S,\omega_{\scriptscriptstyle S}\otimes L_1\otimes L^*_2)
\]
by assumption and Serre duality, and $p_{\scriptscriptstyle
S*}\omega_{\scriptscriptstyle S} = \OO_{\P^1}(-1)$ (see the
construction of $S$ in {\ref{subsection:pha-0-1}}), it follows
that $|d_1 - d_2| = \infty$ (i.\,e. $p_{\scriptscriptstyle
S*}(L_1\otimes L^*_2) = p_{\scriptscriptstyle S*}(L^*_1\otimes
L_2) = 0$ in the setting at the end of
{\ref{subsection:log-trans-2}}). Then, similarly as earlier, we
obtain \[\Ext^{2}(L_1^{\text{log}},L_2^{\text{log}}) =
H^0(\frak{S},\omega_{\scriptscriptstyle \frak{S}}\otimes
L_1^{\text{log}}\otimes L_2^{*\text{log}}) = H^0(\frak{S},0) = 0\]
because $p_{\scriptscriptstyle
\frak{S}*}\omega_{\scriptscriptstyle \frak{S}} = 0$ by
\eqref{can-bun} and definition of $p_{\scriptscriptstyle
\frak{S}*}$ (cf. the proof of Lemma~\ref{theorem:e-is-line-b}).

It thus remains to prove the vanishing of
$\Ext^{1}(L_1^{\text{log}},L_2^{\text{log}}) =
H^1(\frak{S},L_1^{*\text{log}}\otimes L_2^{\text{log}})$. Note
that \beq\nonumber H^1(\frak{S},L_1^{*\text{log}}\otimes
L_2^{\text{log}}) = H^0(\P^1,R^1p_{\scriptscriptstyle
\frak{S}*}(L_1^{*\text{log}}\otimes L_2^{\text{log}})) \eeq by
Leray and equality $|d_1 - d_2| = \infty$. Using the latter
condition once again, by relative Serre duality \cite[III.12]{bpv}
we get
\[ H^0(\P^1,R^1p_{\scriptscriptstyle
\frak{S}*}(L_1^{*\text{log}}\otimes L_2^{\text{log}})) =
H^1(\P^1,\omega_{\scriptscriptstyle \frak{S}\slash\P^1} \otimes
p_{\scriptscriptstyle \frak{S}*}(L_1^{\text{log}}\otimes
L_2^{*\text{log}})) = 0, \] where $\omega_{\scriptscriptstyle
\frak{S}\slash\P^1} := \omega_{\scriptscriptstyle \frak{S}}
\otimes p_{\scriptscriptstyle \frak{S}}^*\OO_{\P^1}(2)$ is the
\emph{dualizing sheaf} of $p_{\scriptscriptstyle \frak{S}}$. This
completes the proof.
\end{proof}\\

\begin{remark}
\label{remark:fjfjf-kffk} The proof of
Proposition~\ref{theorem:log-trans-exc} actually shows that
$\Ext^{1}(L_1,L_2) = 0$ for any $L_i \in \Pic S$ satisfying
$\Ext^{0}(L_1,L_2) = \Ext^{2}(L_1,L_2) = 0$. Of course, this is
due to the special geometric situation dictated by the surface
$S$, and it need not hold for arbitrary rational surfaces. One may
call a collection of exceptional objects $A_1,\ldots,A_N$
\emph{quasi\,-\,exceptional} if the conditions, similar to those
in {\ref{subsection:pre-3}}, are satisfied, but the vanishing of
$\Ext^{1}(A_i,A_j)$ is not required. It would be interesting to
clarify the categorical meaning of quasi\,-\,exceptional
collections.
\end{remark}

From Proposition~\ref{theorem:log-trans-exc} we deduce the main
result of this section:

\begin{corollary}
\label{theorem:log-trans-exc-cor} Let $A_1,\ldots,A_{12}$ be an
exceptional collection of maximal length of line bundles on $S$
(cf. Remark~\ref{remark:exc-num-exc} and \cite{orlov}). Then
$A_1^{\mathrm{log}},\ldots,A_{12}^{\mathrm{log}}$ is an
exceptional collection of maximal length on $\frak{S}$.
\end{corollary}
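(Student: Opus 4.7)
The plan is to verify the two properties of an exceptional collection on $\frak{S}$ term-by-term, using Proposition~\ref{theorem:log-trans-exc} for the off-diagonal vanishings and the Hodge numbers of $\frak{S}$ for the diagonal ones, and then to argue that length $12$ is indeed maximal.

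First, since each $A_i^{\mathrm{log}}$ is a line bundle on $\frak{S}$, one has $\End(A_i^{\mathrm{log}}) = H^0(\frak{S},\OO_{\frak{S}}) = \C$, while for $k > 0$ the groups $\Ext^k(A_i^{\mathrm{log}},A_i^{\mathrm{log}}) = H^k(\frak{S},\OO_{\frak{S}})$ vanish because $\frak{S}$ is a Dolgachev surface, so $q(\frak{S}) = p_g(\frak{S}) = 0$. Thus each $A_i^{\mathrm{log}}$ is automatically exceptional in $D^b(\frak{S})$, and no further work is needed on the diagonal.

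Next, for semiorthogonality, fix $i > j$. By assumption $\Ext^{\bullet}(A_i,A_j) = 0$ in $D^b(S)$, so Proposition~\ref{theorem:log-trans-exc} applied to the pair $(L_1,L_2) = (A_i,A_j)$ delivers $\Ext^{\bullet}(A_i^{\mathrm{log}},A_j^{\mathrm{log}}) = 0$ in $D^b(\frak{S})$. Applying this for every pair $i > j$ yields the required semiorthogonality, so $A_1^{\mathrm{log}},\ldots,A_{12}^{\mathrm{log}}$ is an exceptional collection.

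Finally, maximality: $S$ is rational elliptic with four $I_3$ fibers, hence $c_2(S) = 12$; since logarithmic transformations preserve the Euler characteristic (the multiple fibers are smooth and contribute $0$), one has $c_2(\frak{S}) = 12$ as well. As $\frak{S}$ also satisfies $q = p_g = 0$, Theorem~6 of~\cite{vial} (recalled in Remark~\ref{remark:exc-num-exc}) asserts that any exceptional collection of objects of $D^b(\frak{S})$ has length at most $c_2(\frak{S}) = 12$, so our collection attains the maximum. The main obstacle here is really packaged into Proposition~\ref{theorem:log-trans-exc}, which relies on the {\it Assumption} about the structure of $\Gamma_{\frak{S}}$; once this proposition is in hand the corollary is essentially a clean bookkeeping statement.
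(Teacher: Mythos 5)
Your route is essentially the intended one: the off-diagonal vanishings are exactly Proposition~\ref{theorem:log-trans-exc} applied to the pairs $(L_1,L_2)=(A_i,A_j)$ with $i>j$; the diagonal conditions are automatic for line bundles on a surface with $q=p_g=0$; and maximality is the numerical bound coming from linear independence of exceptional classes in the numerical K\,-\,group, of rank $2+h^{1,1}(\frak{S})=c_2(\frak{S})=12$ (logarithmic transformations do not change the topological Euler number). All of this the paper leaves implicit, so spelling it out is fine; the attribution of the length bound to \cite[Theorem 6]{vial} is slightly off (that theorem describes the structure of maximal\,-\,length numerically exceptional collections, the bound itself is the standard K\,-\,theoretic one recalled in Remark~\ref{remark:exc-num-exc}), but this is cosmetic.

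What you do not address --- and what is in fact the entire content of the paper's own proof --- is that each $A_i^{\mathrm{log}}$ exists as a line bundle on $\frak{S}$ in the first place. The construction of $L^{\mathrm{log}}$ in {\ref{subsection:log-trans-2}}--{\ref{subsection:log-trans-3}} starts from $\widetilde{L}:=p_{\scriptscriptstyle S}^*p_{\scriptscriptstyle S*}L$, and Lemma~\ref{theorem:e-is-line-b} explicitly allows the degenerate case $p_{\scriptscriptstyle S*}L=0$ (the value $d=\infty$); this really occurs, e.g. whenever $L$ has negative degree on the fibres of $p_{\scriptscriptstyle S}$, so it is not vacuous for an abstract exceptional collection of line bundles on $S$. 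If $\widetilde{A_i}=0$ for some $i$, there is no morphism $\gamma$, no divisor $C_{A_i}$, hence no $A_i^{\mathrm{log}}$, and the ``collection'' would not have twelve terms. The paper's proof consists precisely of the observation that it suffices to check $\widetilde{A_i}\ne 0$ for every $i$. So your write-up is correct modulo this well-definedness check, which you should either verify for the collection at hand or state explicitly as part of the hypothesis; with that point added, your argument and the paper's coincide.
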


\begin{proof}
Fix some $i$ and put $L := A_i$. Then, in the notations of
{\ref{subsection:log-trans-2}}, it suffices to prove that
$\widetilde{L} \ne 0$ (aka $d \ne \infty$). This is evident,
however, by definition of $\gamma: \widetilde{L} \lra L$ from
{\ref{subsection:log-trans-3}}.
\end{proof}

% add a remark: what about only numerically exceptional collections, introduce a rationality obstruction (quasi exceptional collection), ...

\bigskip

\renewcommand{\thesubsection}{\bf A.\arabic{equation}}

\renewcommand{\theequation}{A.\arabic{equation}}

\renewcommand{\thesection}{}

\setcounter{equation}{0}

\section*{Appendix}

% In the final Section 5 include a nontechnical discussion of differential operators, Virasoro, SW invariants, symplectic geometry, MS symmetry, etc.

\refstepcounter{equation}
\subsection{Local systems}
\label{subsection:conclusion-1}

We suggest the reader to consult e.\,g. \cite{all-bri} and
\cite{ovs-tab} for the notions and facts used in this subsection
(the literature on the subject is quite vast and we had to limit
ourselves while referring).

Recall that the fibration $p_{\scriptscriptstyle S}: S \lra \P^1
=: \frak{B}$ is \emph{modular}, with $\frak{B}$ being the
$\Gamma(3)$\,-\,curve, so that there is a natural (quotient)
morphism $\frak{B} \lra \P^1$ of degree $12$ ramified \emph{only}
at $\{0,1,\infty\}$. This yields a triangulation $\displaystyle
\bigcup_{i=1}^{24} \overline{\Delta_i} = \frak{B}$, symmetric with
respect to the antipodal involution, and a \emph{meromorphic
projective structure} subject to this triangulation, given by the
sheaf $p_{\scriptscriptstyle S*}\Z_S$. On the open locus
$\displaystyle \bigcup_{i=1}^{24} \Delta_i$, over which
$p_{\scriptscriptstyle S}$ is smooth, the latter is given by
trivializations of the family $H^1(p_{\scriptscriptstyle
S}^{-1}(t),\mathbb{Z})$ from {\ref{subsection:log-trans-1}} over
each open triangle $\Delta_i$, compatibly with the action of
$\Gamma(3)$. In addition, the so\,-\,obtained trivial rank $2$
vector bundle over each $\Delta_i$ is supplied with a flat (Gauss
-- Manin) connection, defined \emph{up to gauge}, which can be
equivalently described by the (Schr\"odinger) operator $P_i := d
\slash dz^2 + \varphi(z)\star$\ for some holomorphic quadratic
differential $\varphi(z)dz^{\otimes 2}$ and complex variable $z
\in \Delta_i$.

Further, the choice of a line bundle $L$ on $S$ yields a basis
$\{g_1,g_2\}$ in the space of solutions to $P_i(g) = 0$, $1 \le i
\le 24$. Namely, trivialization $L \simeq \C \times \Delta_i$ for
each $i$, given by some global section $s_i(z)$ of $L
\big\vert_{\Delta_i}$, yields a holomorphic family of vectors $g_1
:= s_i + \bar{s_i}$, $g_2 := \frac{1}{\sqrt{-1}}(s_i - \bar{s_i})
\in H^1(p_{\scriptscriptstyle S}^{-1}(t),\mathbb{Z}) \otimes \R$,
which are parallel with respect to the connection corresponding to
$P_i$.

One interprets the global sections of $L$ as follows. Consider the
Lie algebra $sl_2(z,z^{-1})$ over the Laurent field $\C((z))$. It
admits a non\,-\,trivial (Gelfand\,--\,Fuchs) $2$\,-\,cycle, which
gives a central extension $\V$ of $sl_2(z,z^{-1})$, the
\emph{Virasoro algebra}. The space of $P_i$ above is identified
with the coadjoint representation of $\V$. Then, for each $1 \le i
\le 24$, a given $s \in H^0(S,L)$ determines the coadjoint orbit
$O_i := \V \cdot P_i$ in such a way that

\begin{itemize}

    \item for any $i \ne j$, there is $g_{ij}\in\Gamma(3)$ with $O_i =
    g_{ij}(O_j) := \V \cdot g_{ij}^*P_j$,

    \smallskip

    \item $g_{ii} = 1$ for all $i$,

    \smallskip

    \item for any $i$, $j$, $k$, we have $g_{ij}g_{jk}g_{ki} = 1$.

\end{itemize}
This follows from the fact that the $\V$\,-\,orbits of $P_i$ are
determined by monodromy of the corresponding second order
differential equations (note once again that $P_i$ a priori depend
on $L$ --- aka on $s_i$ above). The collection $\{O_i,g_{ij}\}$
defines another meromorphic projective structure on $\frak{B}$
(with connection in the same gauge class as the initial one).

\begin{example}
\label{example:standard-proj-str} Let $j: \frak{B} \lra \P^1$ be
the $j$\,-\,map of $p_{\scriptscriptstyle S}$ of degree $12$. Then
the corresponding projective structure is given by $\varphi :=
\displaystyle\big(\frac{j''}{j'}\big)' +
\displaystyle\frac{1}{2}\big(\frac{j''}{j'}\big)^2$, the
\emph{Schwartz derivative} of $j$, with $L$ being one of the
sections of $p_{\scriptscriptstyle S}$. This projective structure
is nothing but the one associated with the homological invariant
of the elliptic fibration on $S$ (cf. \cite[III.11]{bpv} or the
proof of Lemma 3.9 in \cite{bo-pet-yu}).\footnote{~All this
probably goes back to the elliptic functions approach to the
geometry of icosahedron in \cite{klein}.}
\end{example}

\begin{remark}
\label{remark:spectral-curve} The preceding arguments intimately
relate our algebro\,-\,geometric considerations with ``\,Lax
dynamics\,'' $\dot{X} = [M,X]$, $M \in \V$, in vector bundles on
$\P^1$. It would be interesting to obtain some integrable systems
in this way (cf. \cite{viniti-1}, \cite{viniti-2}, \cite{veselov},
\cite{RSS}). Furthermore, one may interpret the pair
$(\widetilde{L},C_L)$ in {\ref{subsection:log-trans-3}} as a
``\,spectral data\,'' associated with $L$, that is a (spectral)
curve plus a line bundle on it. This brings in a relation of our
present setting with Nahm, KdV and KZ equations, on the one hand
(see \cite{hitchin}, \cite{mumford}, \cite{tyurin}), and with
Painlev\'e equations on the other (see \cite{kajiwara-et-al}).
Namely, the change of projective structure on $\P^1$ given by $L$
and $s$ as above should correspond, dually, to evolution of
$(\widetilde{L},C_L)$ in the ``\,phase space\,'' $S$. In this
regard one may identify generic fiber $F_{\zeta}$ of
$p_{\scriptscriptstyle S}$ with (normalization of) $C_L$ and
represent $S$ in terms of a pair of commuting differential
operators (cf. \cite{burban-zheglov}). It would be also
interesting to relate this picture with the \emph{quantum
Calogero\,--\,Painlev\'e correspondence} between Painlev\'e and
non\,-\,stationary Schr\"odinger equations (see e.\,g.
\cite[Section 7]{zotov-smirnov}).
\end{remark}

\refstepcounter{equation}
\subsection{Representation theory}
\label{subsection:conclusion-2}

Constructions in {\ref{subsection:log-trans-2}} and
{\ref{subsection:log-trans-3}} are related to the following
modification of the projective structure on $\frak{B}$ associated
with a given $L \in \Pic S$. Namely, let $p(z)$ be a complex
polynomial, with $\deg p = 3$ and $\mult_0 p = 2$. Consider the
corresponding morphism $p: \P^1 \lra \frak{B}$ and new projective
structure on $\frak{B}$, given by the pullbacks
$p^{-1}(\Delta_i)$, $p^*P_i$ for all $i$ (cf.
{\ref{subsection:conclusion-1}}). By construction, the monodromy
$\Gamma$ of this new structure is a non\,-\,trivial extension of
$\mu_3$ by $\Gamma(3)$, as in {\ref{subsection:log-trans-1}}. It
is then tempting to propose the following:

\begin{conj}
\label{conjecture:log-cor} In the previous setting, projective
structure on $\P^1$ associated with $L$ and $p$ coincides with the
one given by $L^{\mathrm{log}}$ on $\frak{S}$, so that $\Gamma =
\Gamma_{\frak{S}}$.
\end{conj}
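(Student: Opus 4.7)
The plan is to derive Conjecture~\ref{conjecture:log-cor} by a systematic comparison of the two ways of producing a meromorphic projective structure on the source $\P^1$ of $p$: one from the pair $(L,p)$ by pullback, the other from $L^{\mathrm{log}}$ by pushforward along $p_{\scriptscriptstyle \frak{S}}$. The common framework will be the Virasoro coadjoint-orbit realization of~\ref{subsection:conclusion-1}, which attaches to any line bundle on an elliptic surface a Schr\"odinger operator $d/dz^2+\varphi(z)\star$ triangle-by-triangle on the base.

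First, I would interpret the cubic $p(z)$ with $\mult_0 p=2$ as a geometric incarnation of the two logarithmic transforms producing $\frak{S}$. Writing $p(z)=z^{2}(z-a)$, Riemann--Hurwitz shows $p\colon\P^1\lra\frak{B}$ is a double cover near $0$, a triple cover near $\infty$, and simply ramified at one auxiliary interior point. Identifying the source $\P^1$ with the base of $p_{\scriptscriptstyle \frak{S}}$ so that $0\mapsto p$ and $\infty\mapsto q$, the local monodromy of $p^{*}P_i$ around $\infty$ is a generator $\sigma\in\mu_3$, which coincides with the generator of $\mu_3\subset\Gamma_{\frak{S}}$ from the \emph{Assumption} in~\ref{subsection:log-trans-1}. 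The equality $\Gamma=\Gamma_{\frak{S}}$ then reduces to matching the $\Gamma(3)$-factor, which I would obtain from the fact that $p$ is generically an unramified $3$-cover away from $\{0,\infty\}$, combined with the defining relation $g_F\cdot g_1\cdots g_4=1$ of~\ref{subsection:pha-1} that allows one to absorb the auxiliary simple ramification.

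Second, I would match the two projective structures triangle-by-triangle. On each $p^{-1}(\Delta_i)$ the pullback reads $p^{*}P_i = d/dz^{2} + p^{*}\varphi(z)\star$, while the operator $P_i^{\mathrm{log}}$ associated with $L^{\mathrm{log}}$ on $\frak{S}$ is determined by the quadratic differential attached to $p_{\scriptscriptstyle \frak{S}*}\frak{L}$. Using Lemma~\ref{theorem:l-prop-push} together with the identification $\widetilde L=\frak{L}$ from~\ref{subsection:log-trans-2}, the horizontal family $\{g_1,g_2\}$ attached to $L$ pulls back to $\Gamma_{\frak{S}}$-invariant sections of $\frak{L}$, which by Proposition~\ref{theorem:log-trans-ii} are exactly the horizontal sections attached to $L^{\mathrm{log}}$. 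Thus $p^{*}P_i = P_i^{\mathrm{log}}$ away from the ramification of $p$, and the transition cocycles $\{g_{ij}\}$ of~\ref{subsection:conclusion-1} agree on both sides.

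The main obstacle will be the behavior at $0$ and $\infty$, equivalently at the multiple fibers $F_p,F_q$ of $p_{\scriptscriptstyle\frak{S}}$. There the quadratic differentials acquire poles, and one must verify that $p^{*}\varphi$ and the quadratic differential arising from the $\mathrm{log}$-construction have \emph{identical} polar parts, not merely gauge-equivalent ones. I would address this via the spectral-data viewpoint of Remark~\ref{remark:spectral-curve}: the divisor $C_L^{\mathrm{log}} = C_L + C_L^{\sigma} + C_L^{\sigma^{2}}$ entering Proposition~\ref{theorem:log-trans-ii} should correspond under $p$ to the preimage $p^{-1}(q)$ (three infinitely near points at $\infty$) together with $p^{-1}(p)$, so that equality of polar parts reduces to an explicit fixed-point computation for $\sigma^{*}$ acting on $\Pic S$ near the ramification locus.
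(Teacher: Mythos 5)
The statement you are proving is labeled a \emph{Conjecture} in the paper: the authors offer no proof, only the heuristic discussion surrounding it, and they explicitly flag (Remark~\ref{remark:comment-on-assum}) that even the extension structure of $\Gamma_{\frak{S}}$ postulated in the \emph{Assumption} of {\ref{subsection:log-trans-1}} is unverified in general, with the conjecture itself proposed as a possible route to verifying it. Your text is therefore not being measured against a hidden argument in the paper --- and, as written, it is a program rather than a proof. The decisive step is conceded, not established: you yourself identify the behavior at $0$ and $\infty$ (the multiple fibers $F_p,F_q$) as ``the main obstacle'' and resolve it only with ``should correspond'' to $p^{-1}(q)$ and $p^{-1}(p)$ via the spectral-data heuristic of Remark~\ref{remark:spectral-curve}. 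But equality of the polar parts of the two quadratic differentials at exactly these points \emph{is} the content of the conjecture; without that computation nothing has been proved.

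There are further concrete gaps. The claim $\Gamma=\Gamma_{\frak{S}}$ is argued by matching the $\mu_3$-generator of $\Gamma$ with ``the generator of $\mu_3\subset\Gamma_{\frak{S}}$ from the Assumption,'' which presupposes the Assumption for the given $\frak{S}$; this is circular relative to the intended use of the conjecture, and even granting the Assumption, coincidence of two subgroups of $\slg_2(\Z)$ (as opposed to abstract isomorphism of two extensions of $\mu_3$ by $\Gamma(3)$) requires pinning down specific generators up to a single common conjugation, which the sketch does not do. The assertion that the local monodromy of $p^{*}P_i$ around the totally ramified point is an order-$3$ element also needs an actual local computation: pulling back a projective structure along $w=z^{3}$ tends to replace the local monodromy $g$ by $g^{3}$ (solutions $g_j(z^3)$ are single-valued when $g_j$ are), so the appearance of a genuine $\mu_3$-factor must come from the induced singularity of the pulled-back quadratic differential, not from Riemann--Hurwitz bookkeeping alone. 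Finally, Proposition~\ref{theorem:log-trans-ii} only states that the divisor $C_L^{\mathrm{log}}=C_L+C_L^{\sigma}+C_L^{\sigma^2}$ yields a line bundle satisfying the descent conditions of Proposition\,-\,definition~\ref{theorem:log-trans-i}; it says nothing about horizontal sections of a Gauss--Manin-type connection, so the sentence invoking it to identify $p^{*}P_i$ with $P_i^{\mathrm{log}}$ away from ramification does not follow from the cited result.
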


Our point is that co\,-\,adjoint representations of $\Gamma(3)$
(resp. $\Gamma$), together with a triangulation of $\P^1$, should
determine $D^b(S)$ (resp. $D^b(\frak{S})$). More precisely, the
``\,fusion matrices\,'' $g_{ij}$ in
{\ref{subsection:conclusion-1}} define the ``\,conformal
blocks\,'' $s$, from which one reconstructs $L$. The picture here
is quite similar to TQFT (cf. \cite{tyurin}). In turn, applying
$p^*$ as above may be regarded as a ``\,t'Hooft transform\,'' (or
a ``\,change of magnetic charge\,'') for initial integrable system
associated with $L$ (cf. Remark~\ref{remark:spectral-curve}). This
corresponds to the change of a $\V$\,-\,$\Gamma(3)$\,-\,bimodule
structure to a $\V$\,-\,$\Gamma$\,-\,bimodule structure on the
space of quadratic differentials $\varphi(z)$.
Conjecture~\ref{conjecture:log-cor} then claims that the latter
bimodule admits a \emph{geometric} realization as
$L^{\mathrm{log}}$ on the surface $\frak{S}$. One could also
verify the {\it Assumption} from {\ref{subsection:log-trans-1}} in
this way.

\begin{remark}
\label{remark:h-s} The previous considerations may be compared
with (geometric) construction in \cite{hulek-schuett} of certain
Enriques surfaces. Namely, one starts with \emph{quadratic} $p$
and the base change as above, which yields a $\text{K3}$ surface
$\widetilde{S} \lra S$ (in our case $\widetilde{S}$ will be some
elliptic surface of Kodaira dimension $\ge 0$). The corresponding
Enriques surface is then obtained as the quotient of
$\widetilde{S}$ by an involution that comes from the Galois one
composed with the fiberwise shift by some $2$\,-\,torsion section
(such a group action is missing in our situation).
\end{remark}

Two comparisons are in order. First, one may interpret $S$ (resp.
$\frak{S}$) as a moduli space of quiver representations (cf.
\cite{abd-okawa-ueda}, \cite{mozg}), taking values in an abelian
category of $\V$\,-\,$\Gamma(3)$\,-\,bimodules (resp. of
$\V$\,-\,$\Gamma$\,-\,bimodules). The quiver here is the dual
graph of the triangulation on $\frak{B}$ from
{\ref{subsection:conclusion-1}}. In turn, with each $L$ is
associated the bimodule $\widetilde{L}$ (``\,representation\,''),
for which the curve $C_L$ plays the role of ``\,ramification
divisor\,'' of the Virasoro algebra $\V$. Similar picture is
observed in the case of \emph{AS regular quadratic} (Clifford)
algebras (see \cite{belm-pres-van-den}, \cite{rains}). Namely, any
such $\mathcal{A}$ is given by relations, corresponding to a net
of conics in $\P^2$. The latter has the \emph{discriminant curve}
$C_{\mathcal{A}} \subset \P^2$, $\deg C_{\mathcal{A}} = 3$, which
together with a pair of line bundles on it determines
$\mathcal{A}$. Let us also indicate that $\mathcal{A}$ yields a
projective structure on $\frak{B}$ corresponding to a degree four
map $\P^1 \lra \P^1$ given by some pencil of binary quartics. We
will revisit this $\V$ vs. $\mathcal{A}$ correspondence from the
categorical point of view in {\ref{subsection:conclusion-3}}
below.

Finally, an arbitrary exceptional collection of $L$ on $S$ (resp.
on $\frak{S}$) may be regarded as a collection of ``\,conformal
blocks\,'' (cf. {\ref{subsection:conclusion-3}}), so that one
could formulate the semiorthogonality of them in terms of certain
``\,\text{Ind}\,-\,\text{Res} (Frobenius) type\,'' relation
between the corresponding $\V$\,-\,$\Gamma(3)$\,-\,bimodules
(resp. $\V$\,-\,$\Gamma$\,-\,bimodules).

\refstepcounter{equation}
\subsection{Categorification}
\label{subsection:conclusion-3}

Let us now return to the considerations of
Section~\ref{section:log-trans} and provide some evidence for why
$\mathcal{A} =
\left<A_1^{\mathrm{log}},\ldots,A_{12}^{\mathrm{log}}\right>^{\bot}
\ne 0$. To reach contradiction with $\mathcal{A} = 0$ we interpret
both $K$\,-\,groups for $S$ and $\frak{S}$ in terms of the
Grothendieck groups of the bimodules considered in
{\ref{subsection:conclusion-2}} --- at the end of the day the
latter two groups will not be isomorphic (we will write
$\overline{K}_0$ to distinguish the $K$\,-\,group of modules from
that of sheaves). The idea is that once $\mathcal{A}$ is trivial
we get $D^b(S) \simeq D^b(\frak{S})$ and so $\overline{K}_0(S) =
\overline{K}_0(\frak{S})$ (cf.
Proposition~\ref{theorem:log-trans-exc} and
Remark~\ref{remark:exc-num-exc}).  More precisely, we argue that
$\widetilde{L}$, corresponding to the $3$\,-\,multiple fibre $F_q$
of $p_{\scriptscriptstyle \frak{S}}$, gives a non\,-\,trivial
torsion element in $\overline{K}_0(\frak{S})$, whereas
$\overline{K}_0(S) \simeq \Z^{12}$.\footnote{~This should be
compared with invariants constructed in \cite{swan} in order to
distinguish rational cyclic group quotients from irrational ones
(all over $\Q$). See also \cite{max-yura} for further general
outlook.}

To $\widetilde{L}$ there corresponds the character $\psi$ on $\V
\times \Gamma$ of Virasoro\,-\,monodromy representation associated
with the local system on $\P^1$ defined by $\widetilde{L}$. We
view $\psi$ as an element in the group ring
$\C\left[\overline{K}_0(\frak{S})\right]$. Then the condition
$p_{\scriptscriptstyle \frak{S}*}\widetilde{L}^{\otimes 3} =
\OO_{\P^1}$ translates into $\psi^3 = 1$. At the same time, due to
a non\,-\,trivial monodromy around $F_q$, we have $\psi \ne 1$,
whence a torsion in $\overline{K}_0(\frak{S})$. This is our sketch
of why $D^b(\frak{S})$ possesses phantoms in general.

We conclude this section by revisiting the present approach from
the point of view of \emph{non\,-\,commutative geometry} in order
to explain the distinguished role played by fibers of an elliptic
fibration in the description of $D^b$ (cf. the above curves
$C_L$). Our exposition will follow the ``\,semicommutative\,''
approach as discussed in the wonderful work \cite{rains} --- this
aims to put our direct geometric reasoning on the general grounds
of \emph{sheaves of categories} (see below). Recall that one
starts with an elliptic curve $E$ and a pair of degree two
morphisms $p_0,p_1: E \lra \P^1$. Then out of the line bundles
$\L_i := p_i^*\OO_{\P^1}(1)$ are constructed two algebras $S_i$,
their degree two extension $R \supset S_i$ and an $R$\,-\,algebra
$\overline{H}$ generated by $\text{End}_{S_i}(R)$, which is a
certain quadratic $\Z$\,-\,algebra (``\,twisted dihedral group
algebra\,''). This $\overline{H}$ turns out to be an Azumaya
algebra with a maximal order $\overline{S}$. The algebra
$\overline{S}$ is also quadratic and $\Z$\,-\,graded.

Now, coming back to our elliptic surface $S$ (or, more generally,
to $\frak{S}$), it turns out that it is encoded by the center of
an appropriate $\overline{S}$ as above and that $D^b(S)$ is
equivalent to the bounded derived category of
finite\,-\,dimensional $\overline{S}$\,-\,modules. The generating
(simple) modules are also \emph{$\overline{H}$\,-\,bimodules} and
can be extracted from the graded pieces of $\overline{S}$. This
suggests a relation between the present constructions and those
from {\ref{subsection:conclusion-2}} in the following way. First,
arguing as in \cite[Section 3]{hard-kat}, one interprets $D^b(S)$
as a category of global sections of the sheaf of categories
associated with afore\,-\,mentioned quiver (same for
$D^b(\frak{S})$). Recall that the corresponding representations
take values in the $\V$\,-\,$\Gamma(3)$\,-\,bimodules (resp.
$\V$\,-\,$\Gamma$\,-\,bimodules). Second, using the gluing and
geometric realization technique of e.\,g. \cite[Section
5]{orlov1}, one should be able to reinterpret $D^b(S)$ (resp.
$D^b(\frak{S})$) in terms of the
\emph{$\overline{H}$\,-\,bimodules} for an appropriate
$\overline{H}$ as above. This should also give a fully faithful
embedding $D^b(S) \hookrightarrow D^b(\frak{S})$ (compare with
Corollary~\ref{theorem:log-trans-exc-cor}). Finally, the fact that
both categories are actually distinct can be proved by an explicit
computation of the $K$\,-\,groups of respective modules, similarly
as we did at the beginning of this
subsection.\footnote{~Alternatively, one may apply \cite[Lemma
4.3]{bridgeland-maciocia}, which also gives that $D^b(S) \ne
D^b(\frak{S})$.}

\begin{remark}
\label{remark:yang-baxter} More analogies with the preceding
discussion are in order. First, it is a construction of
trigonometric solutions to the Yang -- Baxter equation, the
``\,classical\,'' ($R$\,-\,matrix) one in \cite{bel-dri} and an
``\,$A_{\infty}$\,'' (Fukaya -- Seidel) one in \cite{lek-poli}.
Note also a similarity between the preceding
``\,Virasoro\,-\,Clifford\,'' correspondence and the
``\,Helix\,-\,Sklyanin\,'' one (cf. \cite[Ch. 8]{nakajima}) in the
context of non\,-\,commutative deformations of $\P^n$ (see
\cite[Section 7]{hard-kat} for instance).\footnote{~A feasible
reasoning should probably come from the interpretation in
\cite{keller} of Hochschield cohomology as a Lie algebra structure
on the \emph{(graded) Picard group(s)}.} Further, various surfaces
$X_q$ in \cite[Section 5]{rains} (see also \cite[Section
8]{rains}), corresponding to the above triples $(E,p_0,p_1)$ and
associated with a given rational elliptic surface $X$ (i.\,e.
$D^b(X_q)$ is obtained from $D^b(X)$ via certain gluing
bi\,-\,module), can be regarded as deformations of the
$t$\,-\,structure on $D^b(X)$ (compare with \cite[7.4]{hard-kat}).
This motivates a similar relation to hold between $D^b(\frak{S})$
and $D^b(S)$ (cf. {\ref{subsection:conclusion-4}} below). Finally,
we note that in the case of \emph{generic Enriques surface}
$\Sigma$ there is a similar interpretation for $D^b(\Sigma)$ via
$\text{Perf}(\mathcal{P},\mathcal{R})$ (see \cite[Lemma
6.14]{kuz-per}), where $\mathcal{P}$ is a stacky projective plane
and $\mathcal{R}$ is an Azumaya sheaf on it. The argument in
\cite{kuz-per} is based on the construction of $\Sigma$ as a
codimension $3$ linear section of the join of two $\P^2$s (cf.
\cite[Lemma 6.10]{kuz-per}). This suggests yet another outlook on
categorical properties of Dolgachev surfaces and their explicit
construction (compare with Remark~\ref{remark:h-s}).
\end{remark}

\refstepcounter{equation}
\subsection{A\,-\,side}
\label{subsection:conclusion-4}

% 1) log transform = rational blow down: basic classes descend and SW invariants are preserved (up to a multiple)

% stability vs cluster varieties

% L -- categorical quiver, global section of L -- representation (aka L -- projective structure, global section of L -- its evolution) => D^b(S) (or S) as

% a moduli space of quiver representation

% exceptional objects/collections vs conformal blocks I

% 2) spin^c structure = choice of a basic class (canonical class in our case)

% categorical analog of topological conditions on moduli of vector bundles for a collection of line bundles L_i on elliptic surface S is that

% these form a numerically exceptional collection (the condition being K_S.L_i = -2 - L_i^2 as in Vial's paper).

% 3) categorical analog of SW equations is that L_i form an exceptional collection (Ext(L_i,L_j)=0 for i>j and Ext(L_j,L_i) being a "space of solutions").

% 4) 1 log transform: doesn't change SW (A-side) and actually gives a Halphen surface with D^b = D^b(S) (B-side) -- in fact both categories deform to each other

% by a [AKO] and this again should be seen on the level of deformations of 'stability data' given by the canonical class.

This final subsection discusses some analogies with the Seiberg --
Witten theory on $4$\,-\,manifolds. Here \emph{moduli spaces of
instantons} on $S$ will play the role of geometric counterparts
for $D^b(S)$. Let us recall the relevant notions.

First of all, if $M$ is the $4$\,-\,manifold underlying $S$, then
$K_S$ corresponds to a $\text{spin}^c$\,-\,structure on $M$ (same
also applies to $\frak{S}$ being homeomorphic to $M$). This yields
a decomposition of the complexified tangent bundle $T_S \otimes \C
= (W^{-})^* \otimes W^{+}$ and the (gauge class of) Dirac operator
$D_S: H^0(S,W^{+}) \to H^0(S,W^{-})$. Here $W^{\pm}$ are rank two
vector bundles satisfying $\bigwedge^2 W^{\pm} = -K_S$. Fixing a
rank $2$ vector bundle $\mathcal{V}$ on $S$ with Chern classes
$c_i(\mathcal{V})$ one obtains a moduli space of twisted Dirac
operators $D_S^a: H^0(S,\mathcal{V} \otimes W^{+}) \to
H^0(S,\mathcal{V} \otimes W^{-})$ for various Hermitian
connections $a$ on $\mathcal{V}$. Imposing further restrictions on
the solutions of the equations $D_S^a \star = 0$, as well as on
the curvature of $a$, yields various moduli spaces of
\emph{topological instantons} whose geometric characteristics
provide Donaldson, SW, etc. invariants of $M$, specific to the
smooth structure induced by $S$.

\begin{example}[{cf. \cite[Ch. 1, \S 2]{tyurin-spin-pol}}]
\label{example:tyurin} Consider a collection of vectors $e_i \in
\text{Pic}\,S$ satisfying: (i) $e_i \equiv K_S \mod 2$; (ii)
$K_S^2 - 4c_2(S) \le e_i^2 \le 0$; (iii) $e_i \cdot K_S = 0$. The
hyperplanes (``\,walls\,'') $e_i^{\bot}$ divide $\Pic S \otimes
\R$ into a system of chambers $\mathcal{C}_k$. Then we can form
the moduli space $J_k^l$ of Dirac operators, corresponding to
vector bundles $\mathcal{V}$ with $c_1(\mathcal{V}) = c_1(S)$ and
$c_2(\mathcal{V}) = c_2(S) + k$ as above, which are also
semistable with respect to polarization belonging to the
$k^{\text{th}}$ chamber $\mathcal{C}_k$. Imposing further a
\emph{quasiparabolic structure} on $\mathcal{V}$ with respect to
some curve $C \subset S$, one gets a family of moduli spaces
$J_{k,\alpha}^l$ of $\alpha$\,-\,parabolic vector bundles, $\alpha
\in [0,1/2]$. These $J_{k,\alpha}^l$ also turn out to be
birationally isomorphic.
\end{example}

Going back to the categorical picture let us say first that the
choice of a spin structure and topological data as above parallels
that of a numerically exceptional collection (cf.
Remark~\ref{remark:exc-num-exc}). Further, the exceptionality
condition for collections of objects in $D^b$ may be considered as
an analog of Dirac or Seiberg -- Witten equations, which brings in
a similarity between $D^b$ and various ``\,Jacobians\,'' $J_k^l$
(see Example~\ref{example:tyurin}). We expect the properties of
the latter might guide the properties of the former. Of most
interest to us is the behavior under the logarithmic
transformations (cf. Section~\ref{section:log-trans}).

Namely, we propose that the ``\,core\,'' $D^b(S)$ is preserved
under these transformations (cf. the discussion at the end of
{\eqref{subsection:conclusion-3}}), which is reminiscent of the
above birationality of $J_{k,\alpha}^l$.

\begin{remark}
\label{remark:stability-conditions} Representing one log transform
on a minimal elliptic surface as a \emph{rational blow\,-\,down}
one finds that the SW invariants are preserved (see e.\,g.
\cite[Lecture 4]{FS}). On the other hand, performing such
transform on $S$ yields an \emph{algebraic} rational elliptic
surface $\Sigma$, with $D^b(\Sigma)$ being a deformation of
$D^b(S)$ (see \cite{AKO}). This should give yet another hint for
the behavior of $D^b$ under the logarithmic transformations of
elliptic surfaces in general. Namely, we expect that the canonical
class $K_S$ defines a \emph{stability condition} on $D^b(S)$
w.\,r.\,t. $H^2(S,\mathbb{Z}) = H^2(\Sigma,\mathbb{Z})$ (cf.
\cite[Section 5]{mozg-stab}), which deforms to another stability
condition given by $K_{\Sigma}$. This deformation procedure should
in turn preserve (essential) parts of $D^b(S)$ and $D^b(\Sigma)$.
\end{remark}

\bigskip

\thanks{{\bf Acknowledgments.} The work of the first author was performed at the Center of Pure Mathematics, MIPT, with
financial support of the project FSMG\,-\,2023\,-\,0013. The
second author was supported by the Basic Research Program of the
National Research University Higher School of Economics.

\bigskip

\end{document}